\numberwithin{equation}{section}
\theoremstyle{plain}
\newtheorem{lem}{Lemma}[section]
\newtheorem{df}{Definition}[section]
\newtheorem{tw}{Theorem}[section]
\newtheorem{rem}{Remark}[section]
\newtheorem{co}{Corollary}[section]
\newtheorem{cond}{Condition}[section]
\newcommand{\R}{\mathbb{R}}
\newcommand{\N}{\mathbb{N}}
\begin{document}

\begin{frontmatter}

\title{The long-term behavior of number \\ of near-maximum insurance claims}
\runtitle{Asymptotics of number of near-maximum claims}

\begin{aug}
\author{\fnms{Anna} \snm{Dembi\'{n}ska\thanks{Corresponding author}}\corref{}\ead[label=e1]{dembinsk@mini.pw.edu.pl}}
\and
\author{\fnms{Aneta} \snm{Buraczy\'{n}ska}\ead[label=e2]{a.buraczynska@mini.pw.edu.pl}}

\runauthor{A. Dembi\'{n}ska and A. Buraczy\'{n}ska}

\affiliation{Warsaw University of Technology}

\address{
Faculty of Mathematics and Information Science\\
Warsaw University of Technology\\
ul. Koszykowa 75, 00-662 Warsaw, Poland\\
\printead{e1}
\phantom{E-mail:\ }\printead*{e2}}

\end{aug}

\begin{abstract}
A near-maximum insurance claim is one falling within a distance $a$ of the current maximal claim. 
In this paper, we investigate asymptotic behavior of normalized numbers of near-maximum insurance claims under the assumption  that the sequence of successive claim sizes forms a~strictly stationary process. We present the results in a general form expressing limiting properties of normalized numbers of insurance claims that are in a left neighborhood of the $m_n$th largest claim, where $m_n/n$ tends to zero and $n$ is the number of registered claims. We also give corollaries for sums of 
near-maximum insurance claims.

\end{abstract}


\begin{keyword}
\kwd{Near-maximum insurance claim}
\kwd{Extreme and intermediate order statistics}
\kwd{Stationary process}
\kwd{Almost sure convergence}
\kwd{Limit theorems}
\end{keyword}

\end{frontmatter}

\section{Introduction}
\label{sec1}

Various models of claim exceedances over a fixed or random threshold have been studied in the literature. Special attention has been paid to models of  exceedances of high thresholds, in particular of thresholds determined by the largest or the $k$th largest claim in the portfolio; see, for example, Embrechts et al. (1997),  Li and Pakes (2001),  Hashorva (2003), Chavez-Demoulin and  Embrechts (2004), Balakrishnan et al. (2005), Eryilmaz et al. (2011), Bose and Gangopadhyay (2011), and the references therein. Such models are of special interest in actuarial sciences - they serve to measure, predict and price risk. In different contexts, they also proved to be useful in a variety of other fields like hydrology, environmental research or electrical engineering (Embrechts et al. 1997, Chapter 6).

In this paper, we consider a model of  exceedances of  insurance claims introduced by  Li and Pakes (2001) and studied also  by Hashorva (2003). In this model the claims occur at the random instants of time and, for $t\geq 0$, the number of claims registered during the time interval $[0,t]$ is denoted by $N(t)$. Clearly it is required that $N(t), t\geq 0$, are non-negative and integer-valued random variables (rv's). Sizes of successive claims are represented by rv's $X_1,X_2,\ldots$ and $X_{1:n}\leq X_{2:n}\leq\ldots\leq X_{n:n}$ denote the order statistics corresponding to $X_1,X_2,\ldots,X_n$. Then $X_i$ is called a near maximum claim at time $t$ if its value falls within a distance $a>0$ of the current maximal claim size, that is if
$$X_i\in(X_{N(t):N(t)}-a,X_{N(t):N(t)}).$$
A quantity of interest is the number of near-maximum claims up to time $t$:
$$
{\cal K}_t(a)=\sum_{i=1}^{N(t)} I(X_{N(t):N(t)}-a<X_i<X_{N(t):N(t)}).
$$
 Li and Pakes (2001) derived some distributional and asymptotic results for ${\cal K}_t(a)$ under the assumption that $X_1,X_2,\ldots$ are independent and identically distributed continuous rv's and the random processes $(X_n, n\geq 1)$ and $(N(t), t\geq 0)$ are independent. In particular, they pointed out that the limiting behavior of  ${\cal K}_t(a)$ as $t\to\infty$ and the following rv  counting near-maxima
$$
K_n(a)=\sum_{i=1}^nI(X_{n:n}-a<X_i<X_{n:n})
$$
as $n\to\infty$ are closely related under a weak condition on $N(t)$. They also showed how asymptotic properties of ${\cal K}_t(a)$  as $t\to\infty$ can be deduced from these for $K_n(a)$ as $n\to\infty$. Their method was further developed and exploited by  Hashorva (2003) in the case when some kind of dependence between  claim sizes is allowed.

The aim of this paper is to describe asymptotic behavior of numbers of near-maximum claims under a general assumption that claim sizes $X_1,X_2,\ldots$  form a~strictly stationary process. We will show that this behavior remains unchanged if instead of numbers of near-maximum claims we will consider numbers of claims  in a left neighborhood of  extreme or intermediate order statistic:
\begin{equation}
\label{defcalK}
{\cal K}_t(k_{N(t)},a)=\sum_{i=1}^{N(t)} I(X_{k_{N(t)}:N(t)}-a<X_i<X_{k_{N(t)}:N(t)}),
\end{equation}
where $(k_n,n\ge 1)$ is a~sequence of  integers satisfying the following condition
\begin{equation}
\label{warK}
\textrm{for all }n\ 1\le k_n\le n\textrm{ and }\lim_{n\to\infty} k_n/n=1.
\end{equation}
To describe this behavior we will first focus on the deterministic-sample-size analogue of ${\cal K}_t(k_{N(t)},a)$, that is on the following counting rv
$$
K(k_n,n,a)=\sum_{i=1}^n I( X_{k_n:n}-a<X_i<X_{k_n:n}),
$$
where again $(k_n,n\ge 1)$  satisfies \eqref{warK}. Our crucial result will assert that under mild conditions proportions $K(k_n,n,a)/n$  are almost surely convergent as $n\to\infty$. The most important novelty of this result is that in the strictly-stationary case the limit of $K(k_n,n,a)/n$  can be a non-degenerate rv in contrast to   cases known in the literature  in which the limit is a deterministic  constant.

We will restrict our attention to asymptotic behavior of  numbers of claims near extreme or intermediate order statistic. That of numbers of claims near central order statistic, when in \eqref{defcalK} we have a sequence  $(k_n,n\ge 1)$ satisfying 
$$
\textrm{for all }n\ 1\le k_n\le n\textrm{ and }\lim_{n\to\infty} k_n/n=\lambda\in(0,1),
$$
is quite well known. For details we refer to the works of  Dembi\'nska et al. (2007),  Pakes (2009), Dembi\'nska and Jasi\'nski (2017) and  Dembi\'nska (2012a, 2017).

The paper is organized as follows. Section~\ref{Sec2} presents some preliminaries.
In Section~\ref{sec3}, we describe almost sure limiting behavior of $K(k_n,n,a)/n$ as $n\to\infty$, where $(k_n, n\geq 1)$ is a sequence satisfying  \eqref{warK}. We present this description under a quite general assumption that the underlying sequence of rv's $(X_n,n\ge~1)$ forms a strictly stationary process.
Next, in Section~\ref{Sec4}, we generalize results of the previous section to the case of randomly indexed samples. In particular, we provide description of limiting behavior of appropriately normalized numbers of near-maximum insurance claims. We also give comments concerning the total value of such claims. Finally, in Section~\ref{sec:Examples}, we apply our  results to some special families of strictly stationary processes.

Throughout the paper  we make use of the following notation.  
By $\mathbb{R}$ we denote the set of real numbers. 
For an rv $X$ with  cumulative distribution function~$F$, we set 
$$ \gamma_1^X:=\sup\{x\in\mathbb{R}:\ F(x)<1\}$$
and call $ \gamma_1^X$ the right endpoint of the support of $X$. 
We write  $I(A)$ for the indicator function of a set $A\subset\Omega$, i.e., for $\omega\in\Omega$, $I(A)(\omega)=1$ if $\omega\in A$ and $I(A)(\omega)=0$ otherwise. 
The notation $\xrightarrow{a.s}$, $\xrightarrow{p}$, $\xrightarrow{d}$ stand for almost sure convergence, convergence in probability and  convergence in distribution, respectively. The symbol $a.s$  is an~abbreviation of almost surely. Moreover, in the case when different probability measures appear and confusion may arise, we write $\xrightarrow{\mathbb{P}-a.s}$ and $\mathbb{E}_{\mathbb{P}}$ for almost sure convergence and expectation with respect to the measure $\mathbb{P}$, respectively, and  say that an~event A holds $\mathbb{P}$-a.s. if $\mathbb{P}(A)=1$.
Next,  $a$~is always a fixed positive real number. Any empty sum $\sum_{i=1}^0 a_i$ is by assumption equal to $0$. 
Finally, an extended rv in $(\Omega, \mathcal F, \mathbb{P})$ is a $\mathcal F$-measurable function $X: \Omega\to[-\infty,\infty]$. We adopt standard conventions about arithmetic operations and order relation in $[-\infty,\infty]$. In particular,  if $x\in\R$ then $x\pm\infty=\pm\infty$ and $x<\infty$, and  if $x>0$, then $x\cdot\infty=\infty$.


\section{Preliminaries}
\label{Sec2}

For our developments we will need the concept of conditional right endpoint of the support of an rv given a~sigma-field.
Here, we recall its definition and some of its properties that will be used later on. 

\begin{df}\label{kwantyl}
Suppose $X$ is an~rv on a~probability space $(\Omega,\mathcal{F},\mathbb{P})$,
     and $\mathcal{G}$ is a~sigma-field with $\mathcal{G}\subseteq\mathcal{F}$. The conditional right endpoint of~the support of~$X$ given $\mathcal{G}$, denoted by $\gamma_1(X|\mathcal{G})$, is defined as an~extended rv $\underline{Q}_1$ with the following properties 
    \begin{description}
    \item[(i)]
    $\underline{Q}_1$ is $\mathcal{G}$-measurable,
    \item[(ii)]
    $\mathbb{P}(X\le \underline{Q}_1|\mathcal{G})=1$ a.s.,
    \item[(iii)]
    for any $\mathcal{G}$-measurable extended rv~$Q_1$ such that $\mathbb{P}(X\le Q_1|\mathcal{G})=1$ a.s., we have $Q_1\ge \underline{Q}_1$ a.s.
    \end{description}
\end{df}

The theorem given below ensures the existence of conditional right endpoint of the support of an rv.
\begin{tw}
    For any rv $X$ on $(\Omega,\mathcal{F},\mathbb{P})$ and any sigma-field $\mathcal{G}\subseteq\mathcal{F}$ there exists a~conditional right endpoint of~the support of~rv~$X$ given $\mathcal{G}$.
\end{tw}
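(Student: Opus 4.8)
The plan is to construct $\underline{Q}_1$ explicitly from a countable family of regular conditional probabilities and then verify the three defining properties. First I would fix a version of the conditional distribution: for each rational $r$, choose a $\mathcal{G}$-measurable version of $\mathbb{P}(X \le r \mid \mathcal{G})$, and do this simultaneously so that, on a $\mathbb{P}$-null set aside, the map $r \mapsto \mathbb{P}(X \le r \mid \mathcal{G})$ is non-decreigasing along the rationals. (One could instead invoke a regular conditional distribution of $X$ given $\mathcal{G}$, which exists because $\mathbb{R}$ is a Borel space, but working with the countable family $\{\mathbb{P}(X\le r\mid\mathcal G): r\in\mathbb Q\}$ keeps everything elementary and avoids measurability fuss.) Then I would define
$$
\underline{Q}_1(\omega) := \sup\{\, r \in \mathbb{Q} : \mathbb{P}(X \le r \mid \mathcal{G})(\omega) < 1 \,\},
$$
with the convention $\sup\emptyset = -\infty$, so that $\underline{Q}_1$ is an extended rv taking values in $[-\infty,\infty]$.

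The verification of (i) is immediate: $\underline{Q}_1$ is a countable supremum of $\mathcal{G}$-measurable functions of the form $r\cdot I(\{\mathbb{P}(X\le r\mid\mathcal G)<1\}) + (-\infty)\cdot I(\{\mathbb{P}(X\le r\mid\mathcal G)=1\})$, hence $\mathcal{G}$-measurable. For (ii), I would argue that $\{X \le \underline{Q}_1\} \supseteq \{X \le r\} \cap \{\underline{Q}_1 \ge r\}$ for every rational $r$, and since for $\mathbb{P}$-a.e.\ $\omega$ with $\underline{Q}_1(\omega)$ finite we have $\mathbb{P}(X\le r \mid \mathcal{G})(\omega) = 1$ for all rationals $r > \underline{Q}_1(\omega)$, monotone convergence (letting $r \downarrow \underline{Q}_1(\omega)$ along the rationals) gives $\mathbb{P}(X \le \underline{Q}_1 \mid \mathcal{G}) = 1$ a.s.\ on that set; on $\{\underline{Q}_1 = +\infty\}$ the claim is trivial, and I would check that $\mathbb{P}(\{\underline{Q}_1 = -\infty\}) = 0$ since on that event $\mathbb{P}(X \le r \mid \mathcal G) = 1$ for all $r$, forcing $\mathbb{P}(X = -\infty) = 1$, impossible for a genuine rv. For (iii), suppose $Q_1$ is $\mathcal{G}$-measurable with $\mathbb{P}(X \le Q_1 \mid \mathcal G) = 1$ a.s.; I would show that on the event $\{Q_1 < \underline{Q}_1\}$ one can pick (measurably, via a rational in between) an $r$ with $Q_1 < r < \underline{Q}_1$, whence $\mathbb{P}(X \le r \mid \mathcal{G}) \ge \mathbb{P}(X \le Q_1 \mid \mathcal{G}) = 1$ on that event, contradicting the definition of $\underline{Q}_1$ as the supremum of rationals where this conditional probability is strictly below $1$. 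Hence $\mathbb{P}(Q_1 \ge \underline{Q}_1) = 1$.

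The main obstacle, and the only place requiring genuine care, is the simultaneous choice of versions in the first step: the identities $\mathbb{P}(X \le r \mid \mathcal{G}) \le \mathbb{P}(X \le s \mid \mathcal{G})$ for rationals $r < s$, and $\lim_{r\to\infty}\mathbb{P}(X\le r\mid\mathcal G) = 1$, $\lim_{r\to -\infty}\mathbb{P}(X\le r\mid\mathcal G) = 0$, hold only almost surely, so a priori on a $\mathcal{G}$-measurable co-null set, and one must fix versions so that these relations hold pointwise off a single null set; redefining all versions to be, say, $0$ on that exceptional null set repairs this while preserving $\mathcal{G}$-measurability. Once that bookkeeping is done, properties (i)--(iii) follow by the routine monotone-limit and comparison arguments sketched above.
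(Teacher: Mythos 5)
Note first that the paper does not prove this theorem at all: it refers the reader to Buraczy\'nska and Dembi\'nska (2018) for the existence proof, so your argument can only be judged on its own terms, not against an in-paper proof. On those terms your plan is sound and is the natural construction: fix $\mathcal{G}$-measurable versions of $\mathbb{P}(X\le r\mid\mathcal{G})$ for all rational $r$, make them simultaneously monotone (the exceptional set where monotonicity or the tail limits fail is a countable union of $\mathcal{G}$-measurable null sets, so your redefinition trick is legitimate and preserves $\mathcal{G}$-measurability), and put $\underline{Q}_1=\sup\{r\ \mathrm{rational}:\ \mathbb{P}(X\le r\mid\mathcal{G})<1\}$; properties (i) and (iii) then follow essentially as you indicate, with (iii) handled rational by rational on the $\mathcal{G}$-events $\{Q_1<r\}\cap\{r<\underline{Q}_1\}$. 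The one step you should tighten is (ii): the phrase ``let $r\downarrow\underline{Q}_1(\omega)$ and use monotone convergence'' is not quite an argument, because evaluating $\mathbb{P}(X\le\underline{Q}_1\mid\mathcal{G})$ at the random level $\underline{Q}_1$ is not literally a pointwise limit of the chosen versions at rational levels. A clean fix that stays within your elementary framework is to write $\{X>\underline{Q}_1\}=\bigcup_r\bigl(\{X>r\}\cap\{\underline{Q}_1<r\}\bigr)$, the union over rational $r$; each piece has probability zero since $\{\underline{Q}_1<r\}\in\mathcal{G}$ and $\mathbb{P}(X\le r\mid\mathcal{G})=1$ pointwise on it, so $\mathbb{P}(X>\underline{Q}_1)=0$, and then the paper's lemma stating that $\mathbb{P}(A)=0$ forces $\mathbb{P}(A\mid\mathcal{G})=0$ a.s.\ gives $\mathbb{P}(X\le\underline{Q}_1\mid\mathcal{G})=1$ a.s. Your separate treatment of the event $\{\underline{Q}_1=-\infty\}$ (it is null because $X$ is a genuine real-valued rv) is needed and correct. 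With that single repair the proposal is a complete proof.
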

Moreover, for  given rv $X$ and  sigma-field~$\mathcal{G}$, $\gamma_1(X|\mathcal{G})$  is not necessarily uniquely determined,  but any two versions of $\gamma_1(X|\mathcal{G})$  are equal almost surely.

The next theorem gives another worthwhile property of conditional right endpoint of~the support.
\begin{tw}
\label{stalyKwantyl}
Let $X$ be an rv and $\mathcal{G}\subseteq\mathcal{F}$ be a~sigma-field. If $\gamma_1(X|\mathcal{G})$ is almost surely constant, then $\gamma_1(X|\mathcal{G})=\gamma_1^{X}$ a.s.
\end{tw}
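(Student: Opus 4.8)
The plan is to combine the three defining properties of $\gamma_1(X\mid\mathcal G)$ listed in Definition~\ref{kwantyl} with the right-continuity of cumulative distribution functions. Fix a version $\underline Q_1$ of $\gamma_1(X\mid\mathcal G)$; by hypothesis (and since any two versions agree a.s.) there is a constant $c\in[-\infty,\infty]$ with $\underline Q_1=c$ a.s. The goal is to show $c=\gamma_1^X$, and I would do this by proving the two inequalities separately.

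For $\gamma_1^X\le c$: property (ii) states $\mathbb P(X\le\underline Q_1\mid\mathcal G)=1$ a.s., so taking expectations and using $\underline Q_1=c$ a.s. yields $\mathbb P(X\le c)=1$. Since $X$ is a real-valued rv we have $\mathbb P(X\le-\infty)=0$, hence $c\neq-\infty$. If $c=\infty$ then trivially $\gamma_1^X\le c$; if $c$ is finite, then $F(c)=1$, so $F(x)=1$ for every $x\ge c$ by monotonicity, which gives $\{x\in\mathbb R:F(x)<1\}\subseteq(-\infty,c)$ and therefore $\gamma_1^X\le c$.

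For $c\le\gamma_1^X$: I would apply property (iii) with the deterministic competitor $Q_1:=\gamma_1^X$, which is $\mathcal G$-measurable because it is constant. The point to verify is that $\mathbb P(X\le\gamma_1^X\mid\mathcal G)=1$ a.s., and for this it suffices to check $\mathbb P(X\le\gamma_1^X)=1$. This is immediate when $\gamma_1^X=\infty$; when $\gamma_1^X$ is finite (note $\gamma_1^X>-\infty$ always, since $F(x)\to0$ as $x\to-\infty$), right-continuity of $F$ gives $F(\gamma_1^X)=\lim_{x\downarrow\gamma_1^X}F(x)=1$, because $F(x)=1$ for all $x>\gamma_1^X$. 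Hence property (iii) applies and yields $\gamma_1^X\ge\underline Q_1=c$ a.s.

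Putting the two inequalities together gives $c=\gamma_1^X$, and since $\gamma_1(X\mid\mathcal G)=c$ a.s., the statement follows. The argument is largely routine; the only delicate points are passing from the conditional identity in (ii) to the unconditional $\mathbb P(X\le c)=1$, disposing of the degenerate values $c=\pm\infty$ and $\gamma_1^X=\infty$, and using right-continuity of $F$ to ensure $F(\gamma_1^X)=1$ — this last fact is exactly what makes $\gamma_1^X$ an admissible test function in property (iii), and is the crux of the reverse inequality.
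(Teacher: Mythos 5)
Your argument is correct. Note that the paper itself does not reproduce a proof of Theorem~\ref{stalyKwantyl} (it defers to Buraczy\'nska and Dembi\'nska (2018)), so there is nothing in the text to compare against line by line; what you give is the natural self-contained argument straight from Definition~\ref{kwantyl}. Both halves check out: taking expectations in property (ii) yields $\mathbb{P}(X\le c)=1$, which with monotonicity of $F$ gives $\gamma_1^X\le c$ (and rules out $c=-\infty$); and the constant competitor $Q_1=\gamma_1^X$ is admissible in property (iii) because $F(\gamma_1^X)=1$ by right-continuity (with the cases $\gamma_1^X=\infty$ and $c=\infty$ handled trivially), giving $c\le\gamma_1^X$. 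The only step you state without justification --- that $\mathbb{P}(X\le\gamma_1^X)=1$ implies $\mathbb{P}(X\le\gamma_1^X\mid\mathcal{G})=1$ a.s., and conversely that the a.s.\ conditional identity in (ii) integrates to the unconditional one --- is exactly the content of the paper's Lemma~\ref{bez.war} applied to the complement events $\{X>\gamma_1^X\}$ and $\{X>c\}$, so you could cite it there rather than treating it as obvious; with that reference the proof is complete.
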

For more properties of conditional right endpoint of~the support of an rv and proofs of the above-mentioned facts, we refer the reader to~Buraczy\'nska and Dembi\'nska (2018).

\medskip

To state and prove main results of this paper we will also need 
 some terminology and facts from the ergodic theory, which we now recall. 
By $(\mathbb{R}^{\mathbb{N}},\mathcal{B}(\mathbb{R}^{\mathbb{N}}),\mathbb{Q})$ we denote a~probability triple, where $\mathbb{R}^{\mathbb{N}}$ is the set of sequences of real numbers $(x_1,x_2,\ldots)$, $\mathcal{B}(\mathbb{R}^{\mathbb{N}})$ stands for the Borel sigma-field of subsets of $\mathbb{R}^{\mathbb{N}}$ and $\mathbb{Q}$ is some stationary probability measure on the pair $(\mathbb{R}^{\mathbb{N}},\mathcal{B}(\mathbb{R}^{\mathbb{N}}))$.
We will  use three types of invariant sets.

\begin{df} Let, for $A\subset \mathbb{R}^{\mathbb{N}}$, 
$$T^{-1}A=\{(x_1,x_2,\ldots)\in\mathbb{R}^{\mathbb{N}}:\ (x_2,x_3,\ldots)\in A \}.$$
A~set $A\in \mathcal{B}(\mathbb{R}^{\mathbb{N}})$ is called 
\begin{itemize}
\item  invariant if $A=T^{-1}A$;
\item almost invariant for $\mathbb{Q}$ if 
    \[
    \mathbb{Q}((A\setminus T^{-1}A)\cup(T^{-1}A\setminus A))=0;
    \]
\item  invariant with respect to the sequence $\mathbb{X}=(X_n,n\ge 1)$ defined on~the probability space $(\Omega,\mathcal{F},\mathbb{P})$ if there exists a~set $B\in\mathcal{B}(\mathbb{R}^{\mathbb{N}})$ such that
\begin{equation}\label{warAinvar}
A=\{\omega\in\Omega:\ (X_i(\omega),X_{i+1}(\omega),\ldots)\in B \}\textrm{ for any }i\ge 1.
\end{equation}
\end{itemize}
\end{df}
By $\tilde{\mathcal{I}}$ we denote the collection of all invariant events, while $\mathcal{I}_{\mathbb{Q}}$ and $\mathcal{I}^{\mathbb{X}}$ correspond to
the class of all almost invariant sets for~$\mathbb{Q}$ and 
the class of all invariant sets with respect to the sequence~$\mathbb{X}$, respectively.
The following lemma states  significant properties of these classes. 

\begin{lem}
\label{l2}
Let $\mathbb{X}=(X_n,n\ge 1)$ be a~strictly stationary sequence on $(\Omega,\mathcal{F},\mathbb{P})$.
\begin{description}
\item[(i)]
    $\tilde{\mathcal{I}}$, $\mathcal{I}_{\mathbb{Q}}$ and $\mathcal{I}^{\mathbb{X}}$ are sigma-fields.
\item[(ii)]
    An rv $X$ on $(\mathbb{R}^{\mathbb{N}},\mathcal{B}(\mathbb{R}^{\mathbb{N}}),\mathbb{Q})$ is
\begin{itemize}
\item  $\tilde{\mathcal{I}}$-measurable  if and only if  (iff)   
    \[
    X((x_1,x_2,\ldots))=X((x_2,x_3,\ldots))\textrm{ for  every }(x_1,x_2,\ldots)\in\mathbb{R}^{\mathbb{N}}.
    \]
\item  $\mathcal{I}_{\mathbb{Q}}$-measurable  iff 
    \[
    X((x_1,x_2,\ldots))=X((x_2,x_3,\ldots))\textrm{ for }\mathbb{Q}\textrm{-almost every }(x_1,x_2,\ldots)\in\mathbb{R}^{\mathbb{N}}.
    \]
\end{itemize}
\item[(iii)]
    A~set $A\in\mathcal{F}$ is invariant with respect to~$\mathbb{X}$ iff there exists a~set $B\in\mathcal{I}_{\mathbb{Q}}$ satisfying~\eqref{warAinvar}.
\end{description}
\end{lem}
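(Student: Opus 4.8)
The plan is to treat the three parts in turn, using throughout that the left shift $T\colon\mathbb{R}^{\mathbb{N}}\to\mathbb{R}^{\mathbb{N}}$, $T(x_1,x_2,\ldots)=(x_2,x_3,\ldots)$, is Borel measurable and that set pre-images commute with complements and countable unions. For part (i) I would simply check the $\sigma$-field axioms. That $\tilde{\mathcal{I}}$ is a $\sigma$-field follows from $T^{-1}A^{c}=(T^{-1}A)^{c}$ and $T^{-1}\big(\bigcup_n A_n\big)=\bigcup_n T^{-1}A_n$, together with $\mathbb{R}^{\mathbb{N}}=T^{-1}\mathbb{R}^{\mathbb{N}}$. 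That $\mathcal{I}_{\mathbb{Q}}$ is a $\sigma$-field follows from $A^{c}\triangle(T^{-1}A)^{c}=A\triangle T^{-1}A$ and $\big(\bigcup_n A_n\big)\triangle\big(\bigcup_n T^{-1}A_n\big)\subseteq\bigcup_n\big(A_n\triangle T^{-1}A_n\big)$, which show that complements and countable unions of $\mathbb{Q}$-almost invariant sets are again $\mathbb{Q}$-almost invariant. For $\mathcal{I}^{\mathbb{X}}$ I would observe that if $A$ satisfies \eqref{warAinvar} with a given $B$, then $A^{c}$ satisfies it with $B^{c}$, and if $A_n$ satisfies it with $B_n$, then $\bigcup_n A_n$ satisfies it with $\bigcup_n B_n$ (the same $B$ serving all indices $i$, since it did so for each $A_n$); with $\Omega$ corresponding to $B=\mathbb{R}^{\mathbb{N}}$, this gives the claim.

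For part (ii) the key observation is that $\tilde{\mathcal{I}}$-measurability of $X$ is equivalent to $X^{-1}(C)=T^{-1}X^{-1}(C)$ for every Borel $C\subseteq[-\infty,\infty]$, i.e.\ to the statement that for every $x\in\mathbb{R}^{\mathbb{N}}$ the points $X(x)$ and $X(Tx)$ lie in exactly the same Borel sets; taking $C=\{X(x)\}$ (a Borel subset of $[-\infty,\infty]$) forces $X(Tx)=X(x)$, and the reverse implication is immediate. For $\mathcal{I}_{\mathbb{Q}}$-measurability one runs the same argument modulo null sets: if $X=X\circ T$ holds $\mathbb{Q}$-a.e., then $X^{-1}(C)\triangle T^{-1}X^{-1}(C)\subseteq\{x:X(x)\neq X(Tx)\}$ is $\mathbb{Q}$-null for every Borel $C$; conversely, fixing a countable family $\{C_k\}$ of Borel subsets of $[-\infty,\infty]$ that separates points (for instance $C_k=[-\infty,q)$ ranging over rational $q$), each $X^{-1}(C_k)\triangle T^{-1}X^{-1}(C_k)$ is $\mathbb{Q}$-null, hence so is their union, and outside this union $X$ and $X\circ T$ lie in the same $C_k$ for every $k$, so they coincide there.

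For part (iii) I take $\mathbb{Q}$ to be the law $\mathbb{P}\circ\mathbb{X}^{-1}$ of the strictly stationary sequence $\mathbb{X}$, which is then a stationary measure. The implication ``$\Leftarrow$'' is immediate since $\mathcal{I}_{\mathbb{Q}}\subseteq\mathcal{B}(\mathbb{R}^{\mathbb{N}})$, so a $B\in\mathcal{I}_{\mathbb{Q}}$ satisfying \eqref{warAinvar} already witnesses invariance of $A$ with respect to $\mathbb{X}$. For ``$\Rightarrow$'', suppose $A$ is invariant with respect to $\mathbb{X}$ via some $B\in\mathcal{B}(\mathbb{R}^{\mathbb{N}})$. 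Reading \eqref{warAinvar} for $i=1$ and $i=2$, and using $(X_2,X_3,\ldots)\in B\iff(X_1,X_2,\ldots)\in T^{-1}B$, gives $\mathbb{X}^{-1}(B)=A=\mathbb{X}^{-1}(T^{-1}B)$, so $\mathbb{X}^{-1}\big(B\triangle T^{-1}B\big)=\emptyset$ and hence $\mathbb{Q}\big(B\triangle T^{-1}B\big)=\mathbb{P}\big(\mathbb{X}^{-1}(B\triangle T^{-1}B)\big)=0$; thus this very $B$ already lies in $\mathcal{I}_{\mathbb{Q}}$, proving the claim.

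Most of the work is routine bookkeeping with pre-images and symmetric differences. The one step needing a genuine (if minor) idea is the ``$\Rightarrow$'' half of the $\mathcal{I}_{\mathbb{Q}}$-measurability equivalence, namely upgrading ``the pre-image of every member of a countable generating family is $\mathbb{Q}$-almost invariant'' to ``$X=X\circ T$ $\mathbb{Q}$-a.e.''; the point to be careful about is that $X$ is extended-real-valued, so the chosen countable family must separate $+\infty$ and $-\infty$ as well --- the half-lines $[-\infty,q)$, $q$ rational, do this. A secondary point worth stating explicitly is that $\mathbb{Q}$ in part (iii) must be read as the distribution of $\mathbb{X}$, since that is what makes $\mathcal{I}^{\mathbb{X}}$ and $\mathcal{I}_{\mathbb{Q}}$ comparable.
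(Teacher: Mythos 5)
Your proof is correct. Note, however, that the paper does not prove this lemma at all: parts (i) and (ii) are attributed to standard references (Shiryaev 1996, Ch.~V; Durrett 2010, Sec.~6) and part (iii) to Buraczy\'nska and Dembi\'nska (2018, Lemma~4.2), so there is no in-paper argument to compare against; what you have written is essentially the standard argument behind those citations. The bookkeeping with $T^{-1}$, complements and countable unions in (i) is routine and sound; in (ii) you correctly identify the only delicate step (upgrading almost-invariance of the preimages of a countable separating family to $X=X\circ T$ $\mathbb{Q}$-a.e., with the half-lines $[-\infty,q)$, $q$ rational, correctly separating the extended reals including $\pm\infty$); and in (iii) you correctly read $\mathbb{Q}$ as the law $\mathbb{P}\circ\mathbb{X}^{-1}$ (which is how the paper later uses it, cf.~\eqref{miara}) and observe that comparing \eqref{warAinvar} for $i=1$ and $i=2$ gives $\mathbb{X}^{-1}(B\,\triangle\,T^{-1}B)=\emptyset$, hence $\mathbb{Q}(B\,\triangle\,T^{-1}B)=0$, so the very same $B$ lies in $\mathcal{I}_{\mathbb{Q}}$. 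No gaps.
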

Parts (i) and (ii) of Lemma \ref{l2} can be found, for example, in Shiryaev (1996, Chapter V) and Durrett (2010, Section 6). For part (iii) we refer the reader to Buraczy\'nska and Dembi\'nska (2018, Lemma 4.2).

We will also need the following lemma.
\begin{lem}
\label{bez.war}
Let  $(\Omega,\mathcal{F},\mathbb{P})$ be any probability space and  $\mathcal{G}\subseteq\mathcal{F}$ be a sigma-field. Then, for any $A\in\mathcal{F}$,
$$ 
\mathbb{P}(A|\mathcal{G})=0 \quad  \mathbb{P}-a.s.\; \Leftrightarrow \; \mathbb{P}(A)=0.
$$
\end{lem}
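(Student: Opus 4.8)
The statement is the elementary fact that an event has conditional probability zero almost surely if and only if it has probability zero. The plan is to prove the two implications separately, the right-to-left direction being the substantial one.

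For the implication ``$\Leftarrow$'': assume $\mathbb{P}(A)=0$. The conditional probability $\mathbb{P}(A|\mathcal{G})$ is, by definition, a $\mathcal{G}$-measurable nonnegative random variable whose integral over every set $G\in\mathcal{G}$ equals $\mathbb{P}(A\cap G)$. Taking $G=\Omega$ gives $\mathbb{E}_{\mathbb{P}}\big[\mathbb{P}(A|\mathcal{G})\big]=\mathbb{P}(A)=0$. A nonnegative random variable with zero expectation is zero almost surely, so $\mathbb{P}(A|\mathcal{G})=0$ $\mathbb{P}$-a.s. For the implication ``$\Rightarrow$'': assume $\mathbb{P}(A|\mathcal{G})=0$ $\mathbb{P}$-a.s.; then again $\mathbb{P}(A)=\mathbb{E}_{\mathbb{P}}\big[\mathbb{P}(A|\mathcal{G})\big]=\mathbb{E}_{\mathbb{P}}[0]=0$.

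Both directions reduce to the single identity $\mathbb{P}(A)=\mathbb{E}_{\mathbb{P}}\big[\mathbb{P}(A|\mathcal{G})\big]$ (the tower property, i.e.\ the defining property of conditional expectation evaluated on $G=\Omega$) together with the basic fact that a nonnegative integrable random variable vanishes a.s.\ iff its expectation vanishes. There is no real obstacle here; the only point requiring a word of care is that $\mathbb{P}(A|\mathcal{G})$ is only defined up to a $\mathbb{P}$-null set, so the statements ``$\mathbb{P}(A|\mathcal{G})=0$'' must be read as holding for one (equivalently every) version, which is exactly what the ``$\mathbb{P}$-a.s.'' qualifier records. I would state the proof in two or three lines invoking these standard facts.
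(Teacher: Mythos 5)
Your proof is correct and takes essentially the same route as the paper: both arguments rest on the defining property of conditional expectation tested on sets $G\in\mathcal{G}$. The only (cosmetic) difference is that you use just $G=\Omega$, i.e.\ the identity $\mathbb{P}(A)=\mathbb{E}_{\mathbb{P}}\big[\mathbb{P}(A|\mathcal{G})\big]$, together with the fact that a nonnegative random variable with zero expectation vanishes a.s., whereas the paper verifies $\mathbb{P}(A\cap G)=0$ for every $G\in\mathcal{G}$, using monotonicity of $\mathbb{P}$ for the direction starting from $\mathbb{P}(A)=0$.
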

\begin{proof}
By the definition of conditional expectation, $\mathbb{P}(A|\mathcal{G})=0 \;\; \mathbb{P}-a.s.$ iff
$$\mathbb{P}(A\cap G)=0 \hbox{ for any } G\in  \mathcal{G}.$$
Taking $G=\Omega$ we get $\mathbb{P}(A)=0$, which proves the ``if'' part of the lemma. To show the ``only if'' part, note that $\mathbb{P}(A)=0$ implies, for all $G\in  \mathcal{G}$,
$$0\leq \mathbb{P}(A\cap G)\leq \mathbb{P}(A)=0,$$
which clerly forces $\mathbb{P}(A\cap G)=0$.
\end{proof}

We conclude this section with three theorems describing the almost sure limiting behavior of extreme and intermediate order statistics arising from strictly stationary sequences of rv's. They are taken from Buraczy\'nska and Dembi\'nska (2018) and will be used in Section \ref{sec3}. The first one concerns the situation when the stationary sequence of rv's is carried by a  probability triple $(\mathbb{R}^{\mathbb{N}},\mathcal{B}(\mathbb{R}^{\mathbb{N}}),\mathbb{Q})$.

\begin{tw}\label{tw.1}
Let $Y$ be an rv on a probability space $(\mathbb{R}^{\mathbb{N}},\mathcal{B}(\mathbb{R}^{\mathbb{N}}),\mathbb{Q})$, where the probability
measure $\mathbb{Q}$ is stationary. Suppose that the sequence of rv's $(Y_n, n \ge 1)$ is defined by
\begin{equation}\label{def_Y}
Y_i ((x_1, x_2,\ldots)) = Y ((x_i , x_{i+1},\ldots))\textrm{ for }(x_1, x_2,\ldots) \in \mathbb{R}^{\mathbb{N}}\textrm{ and }i \ge 1.
\end{equation}
If $(k_n, n \ge 1)$ is a~sequence of  integers satisfying~\eqref{warK}
then
\begin{equation}\label{teza1}
Y_{k_n:n}\xrightarrow{\mathbb{Q}-a.s.}\gamma_1(Y|\mathcal{I}_{\mathbb{Q}}).
\end{equation}
\end{tw}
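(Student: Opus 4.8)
The plan is to prove the two inequalities $\liminf_{n\to\infty} Y_{k_n:n}\ge \gamma_1(Y|\mathcal I_{\mathbb Q})$ and $\limsup_{n\to\infty} Y_{k_n:n}\le \gamma_1(Y|\mathcal I_{\mathbb Q})$ separately, both $\mathbb Q$-a.s.

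\textbf{The easy inequality.} First I would show $Y_{k_n:n}\le \gamma_1(Y|\mathcal I_{\mathbb Q})$ for all $n$, $\mathbb Q$-a.s. Write $\underline Q_1=\gamma_1(Y|\mathcal I_{\mathbb Q})$. For each fixed $i$, property (ii) of Definition~\ref{kwantyl} gives $\mathbb Q(Y_i\le \underline Q_1\mid \mathcal I_{\mathbb Q})=1$ a.s., and since by Lemma~\ref{l2}(ii) $\underline Q_1$ is (a version of) an $\mathcal I_{\mathbb Q}$-measurable function, applying Lemma~\ref{bez.war} (in the equivalent form $\mathbb P(A\mid\mathcal G)=1$ a.s. $\Leftrightarrow \mathbb P(A)=1$, obtained by passing to complements) yields $\mathbb Q(Y_i\le \underline Q_1)=1$. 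Intersecting over $i\in\mathbb N$ we get $\mathbb Q(Y_i\le \underline Q_1\ \text{for all }i)=1$, hence $\sup_i Y_i\le \underline Q_1$ a.s., and in particular $Y_{k_n:n}\le Y_{n:n}=\max_{1\le i\le n} Y_i\le \underline Q_1$ a.s. for every $n$. Therefore $\limsup_n Y_{k_n:n}\le \underline Q_1$ a.s. (One must note the trivial case $\underline Q_1=+\infty$ on some invariant set, where the inequality holds automatically.)

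\textbf{The hard inequality.} This is where the main work lies: showing that for $\mathbb Q$-almost every $\omega$, $Y_{k_n:n}(\omega)$ eventually exceeds any level below $\underline Q_1(\omega)$. Fix a level; the natural candidate is a threshold of the form $\underline Q_1 - \varepsilon$ for rational $\varepsilon>0$, but since $\underline Q_1$ is itself a random variable one should instead work with an $\mathcal I_{\mathbb Q}$-measurable simple function $Z$ with $Z<\underline Q_1$ (on the set where $\underline Q_1>-\infty$) and then take a countable sup of such $Z$'s. For such a $Z$, I claim $\mathbb Q(Y<Z\mid\mathcal I_{\mathbb Q})>0$ a.s. on $\{Z<\underline Q_1\}$: otherwise $\mathbb Q(Y\le Z\mid\mathcal I_{\mathbb Q})=1$ on a non-null $\mathcal I_{\mathbb Q}$-set, and patching $Z$ with $\underline Q_1$ there produces a $\mathcal G$-measurable extended rv dominating $Y$ conditionally but strictly smaller than $\underline Q_1$ on a non-null set, contradicting the minimality property (iii). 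Wait --- one must be careful: (iii) requires the competitor to satisfy $\mathbb P(Y\le Q_1\mid\mathcal G)=1$ \emph{everywhere} a.s., so one genuinely needs to glue: set $Q_1:=Z$ on the bad $\mathcal I_{\mathbb Q}$-set and $Q_1:=\underline Q_1$ off it; this $Q_1$ is $\mathcal I_{\mathbb Q}$-measurable, satisfies the conditional-domination property, yet $Q_1\not\ge\underline Q_1$ a.s., the contradiction.

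With positivity of the conditional probability in hand, the conclusion follows from the ergodic theorem. By Lemma~\ref{l2}(ii)--(iii), $\mathcal I_{\mathbb Q}$ is (up to null sets) the invariant sigma-field for the shift $T$, so Birkhoff's pointwise ergodic theorem applied to the indicator $\mathbf 1\{Y<Z\}$ (note $\{(x_1,x_2,\ldots):Y((x_1,\ldots))<Z\}$, with $Z$ shift-invariant, is an a.e.-invariant set used only as a measurable set here) gives
\begin{equation}\label{birkhoff-step}
\frac1n\sum_{i=1}^n \mathbf 1\{Y_i<Z\}\xrightarrow{\mathbb Q-a.s.}\mathbb Q(Y<Z\mid\mathcal I_{\mathbb Q}).
\end{equation}
On $\{Z<\underline Q_1\}$ the right-hand side is strictly positive, so the number of indices $i\le n$ with $Y_i<Z$ grows linearly in $n$; since $k_n/n\to1$, we have $n-k_n=o(n)$, hence for $n$ large the count of $Y_i<Z$ among $i\le n$ exceeds $n-k_n$, which forces $Y_{k_n:n}\ge Z$. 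Thus $\liminf_n Y_{k_n:n}\ge Z$ a.s. on $\{Z<\underline Q_1\}$. Taking the supremum over a countable family of such simple $Z$'s that increases to $\underline Q_1$ on $\{\underline Q_1>-\infty\}$ (and handling $\{\underline Q_1=-\infty\}$ trivially) gives $\liminf_n Y_{k_n:n}\ge \underline Q_1$ a.s. Combining the two inequalities yields $Y_{k_n:n}\to\underline Q_1=\gamma_1(Y|\mathcal I_{\mathbb Q})$ $\mathbb Q$-a.s., which is \eqref{teza1}.

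\textbf{Anticipated obstacle.} The delicate point is the second step: extracting a \emph{strictly positive} conditional probability $\mathbb Q(Y<Z\mid\mathcal I_{\mathbb Q})>0$ below the level $\underline Q_1$, and doing so with a \emph{random} (conditionally varying) endpoint rather than a constant one. The gluing argument against property (iii) must be set up carefully so that the competitor is genuinely admissible (conditional domination holds on all of $\mathbb R^{\mathbb N}$, not just on the bad set), and one must reduce to a countable approximating family to avoid an uncountable union of null sets. Everything else --- the easy inequality and the passage from Birkhoff's theorem plus $k_n/n\to1$ to the lower bound on the order statistic --- is routine.
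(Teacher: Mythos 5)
The paper itself does not prove Theorem~\ref{tw.1} --- it is quoted from Buraczy\'nska and Dembi\'nska (2018) --- so your argument can only be judged on its own terms; its two-sided structure (conditional domination of $Y_i$ by $\underline Q_1=\gamma_1(Y|\mathcal I_{\mathbb Q})$ for the upper bound; positivity of a conditional exceedance probability, Birkhoff's theorem, and $k_n/n\to 1$ for the lower bound) is the natural route and is in the same spirit as the ergodic machinery the paper uses for Theorem~\ref{tw.3}. The gluing argument against the minimality property (iii) of Definition~\ref{kwantyl}, and the reduction to a countable approximating family of $\mathcal I_{\mathbb Q}$-measurable levels, are exactly the right ingredients.

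There is, however, a systematic direction error in your ``hard inequality'' which, taken literally, makes several steps false. The claim you need is $\mathbb Q(Y>Z\mid\mathcal I_{\mathbb Q})>0$ a.s.\ on $\{Z<\underline Q_1\}$, not $\mathbb Q(Y<Z\mid\mathcal I_{\mathbb Q})>0$; the latter is simply not true in general (take $\mathbb Q$ the point mass at a constant sequence, or an i.i.d.\ product of uniforms on $[0,1]$ with $Z\equiv 0$). Accordingly, Birkhoff must be applied to $\mathbf 1\{Y_i>Z\}$, and the counting step should read: the number of $i\le n$ with $Y_i>Z$ grows linearly, hence eventually exceeds $n-k_n$, so at most $k_n-1$ of $Y_1,\dots,Y_n$ are $\le Z$ and therefore $Y_{k_n:n}>Z$. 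As written --- ``the count of $Y_i<Z$ among $i\le n$ exceeds $n-k_n$, which forces $Y_{k_n:n}\ge Z$'' --- the implication is false: many values \emph{below} $Z$ pull the $k_n$th order statistic down, not up. Your own negation in the gluing step (``otherwise $\mathbb Q(Y\le Z\mid\mathcal I_{\mathbb Q})=1$ on a non-null $\mathcal I_{\mathbb Q}$-set'') is the negation of the correct claim, and the contradiction with (iii) is set up correctly, so this reads as a consistent sign slip rather than a conceptual gap; replacing $\{Y<Z\}$ by $\{Y>Z\}$ in the claim, in the Birkhoff step and in the counting step repairs the proof. One smaller point to make explicit: in the easy inequality, property (ii) gives $\mathbb Q(Y\le\underline Q_1)=1$ only for $Y=Y_1$; to get $\mathbb Q(Y_i\le\underline Q_1)=1$ for $i\ge 2$ you need $\underline Q_1\circ T^{i-1}=\underline Q_1$ $\mathbb Q$-a.s.\ (Lemma~\ref{l2}(ii), iterated, using stationarity of $\mathbb Q$ to keep the exceptional sets null), so that $\{Y_i>\underline Q_1\}$ coincides a.s.\ with $T^{-(i-1)}\{Y>\underline Q_1\}$, a null set by stationarity; you cite the right lemma but skip this transfer.
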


The second result shows that  the above theorem remains true if the sigma-field~$\mathcal{I}_{\mathbb{Q}}$ is replaced by $\tilde{\mathcal{I}}$.

\begin{tw}
\label{tw.statystyki2.4}
If assumptions of Theorem~\ref{tw.1} hold, 
then
\begin{equation}\label{teza3}
Y_{k_n:n}\xrightarrow{\mathbb{Q}-a.s.}\gamma_1(Y|\tilde{\mathcal{I}}).
\end{equation}
\end{tw}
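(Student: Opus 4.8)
The plan is to deduce Theorem~\ref{tw.statystyki2.4} from Theorem~\ref{tw.1} by showing that the two conditional right endpoints coincide, namely
$\gamma_1(Y|\mathcal{I}_{\mathbb{Q}})=\gamma_1(Y|\tilde{\mathcal{I}})$ $\mathbb{Q}$-a.s.; once this is established, \eqref{teza3} follows at once from \eqref{teza1}. Note that $\tilde{\mathcal{I}}\subseteq\mathcal{I}_{\mathbb{Q}}$, since an invariant set $A=T^{-1}A$ is trivially almost invariant, and that both are sigma-fields by Lemma~\ref{l2}(i); so the equality will be obtained by proving two inequalities, in each case by checking that a suitable rv satisfies conditions (i)--(ii) of Definition~\ref{kwantyl} for the appropriate sigma-field and then invoking the minimality property~(iii).

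For the inequality $\gamma_1(Y|\mathcal{I}_{\mathbb{Q}})\le\gamma_1(Y|\tilde{\mathcal{I}})$ $\mathbb{Q}$-a.s.\ I would argue that $\gamma_1(Y|\tilde{\mathcal{I}})$ works as a candidate for $\gamma_1(Y|\mathcal{I}_{\mathbb{Q}})$. Condition~(i) of Definition~\ref{kwantyl} holds because a $\tilde{\mathcal{I}}$-measurable rv is $\mathcal{I}_{\mathbb{Q}}$-measurable. For~(ii), start from $\mathbb{Q}\big(Y\le\gamma_1(Y|\tilde{\mathcal{I}})\,\big|\,\tilde{\mathcal{I}}\big)=1$ a.s.; applying Lemma~\ref{bez.war} to the event $\{Y>\gamma_1(Y|\tilde{\mathcal{I}})\}$ gives $\mathbb{Q}\big(Y>\gamma_1(Y|\tilde{\mathcal{I}})\big)=0$, and a second application of Lemma~\ref{bez.war}, now with the sigma-field $\mathcal{I}_{\mathbb{Q}}$, yields $\mathbb{Q}\big(Y\le\gamma_1(Y|\tilde{\mathcal{I}})\,\big|\,\mathcal{I}_{\mathbb{Q}}\big)=1$ a.s. Then minimality~(iii) of $\gamma_1(Y|\mathcal{I}_{\mathbb{Q}})$ gives the claimed bound. (This is just the monotonicity of $\gamma_1(\cdot|\mathcal{G})$ under refinement of $\mathcal{G}$.)

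The reverse inequality is the substantive part, and the hard step will be the following measurability fact: every $\mathcal{I}_{\mathbb{Q}}$-measurable extended rv has an $\tilde{\mathcal{I}}$-measurable version (equal to it $\mathbb{Q}$-a.s.). By the characterizations in Lemma~\ref{l2}(ii) this reduces, on $(\mathbb{R}^{\mathbb{N}},\mathcal{B}(\mathbb{R}^{\mathbb{N}}),\mathbb{Q})$, to the standard ergodic-theoretic observation that if $X((x_1,x_2,\ldots))=X((x_2,x_3,\ldots))$ for $\mathbb{Q}$-almost every $(x_1,x_2,\ldots)$, then $\tilde X:=\limsup_{n}X\circ T^n$ is strictly shift-invariant, hence $\tilde{\mathcal{I}}$-measurable, and agrees with $X$ outside a $\mathbb{Q}$-null set (this uses only stationarity of $\mathbb{Q}$, and may alternatively be cited from Buraczy\'nska and Dembi\'nska (2018)). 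Taking $Z$ to be such a version of $\gamma_1(Y|\mathcal{I}_{\mathbb{Q}})$, one verifies exactly as in the previous paragraph---two applications of Lemma~\ref{bez.war}---that $Z$ is $\tilde{\mathcal{I}}$-measurable and satisfies $\mathbb{Q}(Y\le Z\,|\,\tilde{\mathcal{I}})=1$ a.s., so minimality~(iii) of $\gamma_1(Y|\tilde{\mathcal{I}})$ forces $\gamma_1(Y|\tilde{\mathcal{I}})\le Z=\gamma_1(Y|\mathcal{I}_{\mathbb{Q}})$ $\mathbb{Q}$-a.s. Combining the two inequalities gives the desired equality, and the theorem follows. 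Apart from the existence of an $\tilde{\mathcal{I}}$-measurable version, everything here is routine manipulation of the defining properties of the conditional right endpoint together with Lemma~\ref{bez.war}.
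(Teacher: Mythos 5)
Your argument is correct, but it is worth noting that the paper itself gives no proof of this statement: Theorems~\ref{tw.1}--\ref{tw.2} are quoted from Buraczy\'nska and Dembi\'nska (2018), and the identity $\gamma_1(Y|\mathcal{I}_{\mathbb{Q}})=\gamma_1(Y|\tilde{\mathcal{I}})$ $\mathbb{Q}$-a.s.\ appears only in Remark~\ref{rem3.1}, where it is \emph{deduced from} the two convergence theorems. You reverse that logic: you prove the identity directly and then obtain \eqref{teza3} as an immediate corollary of \eqref{teza1}, which yields a self-contained reduction of Theorem~\ref{tw.statystyki2.4} to Theorem~\ref{tw.1} that the paper does not supply. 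The two inequalities are handled correctly: the easy direction is exactly the monotonicity of the conditional right endpoint under enlargement of the sigma-field ($\tilde{\mathcal{I}}\subseteq\mathcal{I}_{\mathbb{Q}}$ plus two applications of Lemma~\ref{bez.war} and property (iii) of Definition~\ref{kwantyl}), and the substantive direction rests on the standard ergodic-theoretic fact that an a.e.\ shift-invariant function admits an everywhere shift-invariant version, e.g.\ $\limsup_n X\circ T^n$; your sketch of this is sound, since stationarity of $\mathbb{Q}$ makes $\bigcup_n T^{-n}\{X\neq X\circ T\}$ a null set, off which $X\circ T^n=X$ for all $n$, and the $\limsup$ is strictly invariant, hence $\tilde{\mathcal{I}}$-measurable by the first part of Lemma~\ref{l2}(ii). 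Two minor points you should make explicit in a written version: $\gamma_1(Y|\cdot)$ is in general an \emph{extended} rv, so Lemma~\ref{l2}(ii) (stated for rv's) needs the routine extension to $[-\infty,\infty]$-valued functions (e.g.\ by composing with a homeomorphism onto $[-1,1]$); and the passage from $\mathbb{Q}(Y\le\cdot\,|\,\mathcal{G})=1$ a.s.\ to the complementary event having conditional probability $0$ a.s.\ should be stated, though it is immediate. With these cosmetic additions the proposal is a complete and correct proof, arguably more informative than the paper's citation, at the cost of importing one classical measurability lemma that the paper never needs because it takes both theorems off the shelf.
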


Finally, the third result is a generalization of Theorems \ref{tw.1} and \ref{tw.statystyki2.4}  to the case of strictly stationary sequences defined on any probability space.
\begin{tw}\label{tw.2}
Let $\mathbb{X}=(X_n,n\ge 1)$ be a~strictly stationary sequence on $(\Omega,\mathcal{F},\mathbb{P})$ and $(k_n,n\ge 1)$ be a~sequence of  integers satisfying~\eqref{warK}. Then
\begin{equation}\label{teza2}
X_{k_n:n}\xrightarrow{\mathbb{P}-a.s.}\gamma_1(X_1|\mathcal{I}^{\mathbb{X}}).
\end{equation}
\end{tw}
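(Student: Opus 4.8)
The plan is to reduce Theorem~\ref{tw.2}, concerning a strictly stationary sequence $\mathbb{X}=(X_n,n\ge 1)$ on an arbitrary probability space $(\Omega,\mathcal{F},\mathbb{P})$, to the special case already settled in Theorem~\ref{tw.1} (or Theorem~\ref{tw.statystyki2.4}), where the stationary sequence lives on the canonical sequence space $(\mathbb{R}^{\mathbb{N}},\mathcal{B}(\mathbb{R}^{\mathbb{N}}),\mathbb{Q})$. The natural device is the pushforward: let $\mathbb{Q}$ be the law of the whole sequence $\mathbb{X}$ on $(\mathbb{R}^{\mathbb{N}},\mathcal{B}(\mathbb{R}^{\mathbb{N}}))$, i.e.\ $\mathbb{Q}=\mathbb{P}\circ\mathbb{X}^{-1}$ where $\mathbb{X}\colon\Omega\to\mathbb{R}^{\mathbb{N}}$, $\omega\mapsto(X_1(\omega),X_2(\omega),\ldots)$. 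Strict stationarity of $\mathbb{X}$ is exactly the statement that $\mathbb{Q}$ is invariant under the shift $T$, so the hypotheses of Theorem~\ref{tw.1} are met for the coordinate rv $Y((x_1,x_2,\ldots))=x_1$, whose shifted copies $Y_i((x_1,x_2,\ldots))=x_i$ are precisely the coordinate projections.

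First I would invoke Theorem~\ref{tw.1} to obtain $Y_{k_n:n}\xrightarrow{\mathbb{Q}-a.s.}\gamma_1(Y|\mathcal{I}_{\mathbb{Q}})$ on the sequence space. Next I would transport this back to $\Omega$: since $(X_i(\omega),i\ge 1)$ has law $\mathbb{Q}$, the event on which $Y_{k_n:n}$ fails to converge to $\gamma_1(Y|\mathcal{I}_{\mathbb{Q}})$ is a $\mathbb{Q}$-null Borel set $B_0\subseteq\mathbb{R}^{\mathbb{N}}$, and $\mathbb{X}^{-1}(B_0)$ is then a $\mathbb{P}$-null set on whose complement $X_{k_n:n}(\omega)=Y_{k_n:n}((X_i(\omega)))\to\gamma_1(Y|\mathcal{I}_{\mathbb{Q}})((X_i(\omega)))$. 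So $X_{k_n:n}\xrightarrow{\mathbb{P}-a.s.}\gamma_1(Y|\mathcal{I}_{\mathbb{Q}})\circ\mathbb{X}$, and it remains to identify the limiting rv $\gamma_1(Y|\mathcal{I}_{\mathbb{Q}})\circ\mathbb{X}$ with $\gamma_1(X_1|\mathcal{I}^{\mathbb{X}})$.

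That identification is the crux of the argument, and the one I expect to be the main obstacle. The tool is Lemma~\ref{l2}(iii): a set $A\in\mathcal{F}$ is invariant with respect to $\mathbb{X}$ iff $A=\mathbb{X}^{-1}(B)$ for some $B\in\mathcal{I}_{\mathbb{Q}}$; thus $\mathcal{I}^{\mathbb{X}}=\mathbb{X}^{-1}(\mathcal{I}_{\mathbb{Q}})$, and composing a $\mathcal{I}_{\mathbb{Q}}$-measurable function with $\mathbb{X}$ yields a $\mathcal{I}^{\mathbb{X}}$-measurable function. So I would set $\underline{Q}:=\gamma_1(Y|\mathcal{I}_{\mathbb{Q}})\circ\mathbb{X}$ and verify that it satisfies the three defining properties of $\gamma_1(X_1|\mathcal{I}^{\mathbb{X}})$ from Definition~\ref{kwantyl}. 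Property (i), $\mathcal{I}^{\mathbb{X}}$-measurability, follows as just noted. For (ii) and (iii) one must pass conditional-probability statements through the map $\mathbb{X}$: the relation $\mathbb{P}(X_1\le\underline{Q}\,|\,\mathcal{I}^{\mathbb{X}})=1$ a.s.\ translates, via $\mathbb{X}$ and the change-of-variables formula for conditional expectations given a pulled-back sigma-field, into $\mathbb{Q}(Y\le\gamma_1(Y|\mathcal{I}_{\mathbb{Q}})\,|\,\mathcal{I}_{\mathbb{Q}})=1$ $\mathbb{Q}$-a.s., which is property (ii) of $\gamma_1(Y|\mathcal{I}_{\mathbb{Q}})$; the minimality property (iii) is handled symmetrically, using that every $\mathcal{I}^{\mathbb{X}}$-measurable competitor $Q_1$ is itself of the form $q\circ\mathbb{X}$ with $q$ $\mathcal{I}_{\mathbb{Q}}$-measurable. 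The delicate points are checking that these pushforward/pullback manipulations of conditional expectations are legitimate for extended (possibly $\pm\infty$-valued) rv's and that "$\mathbb{P}$-a.s." statements correspond correctly to "$\mathbb{Q}$-a.s." statements under $\mathbb{X}$; Lemma~\ref{bez.war} and Lemma~\ref{l2}(iii) are exactly what make this bookkeeping go through. Finally, by the a.s.-uniqueness of the conditional right endpoint, $\underline{Q}=\gamma_1(X_1|\mathcal{I}^{\mathbb{X}})$ a.s., and \eqref{teza2} follows.
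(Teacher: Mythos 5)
The paper itself gives no proof of Theorem~\ref{tw.2}: it is quoted, together with Theorems~\ref{tw.1} and~\ref{tw.statystyki2.4}, from Buraczy\'nska and Dembi\'nska (2018), so there is no internal argument to compare with line by line. That said, your strategy --- push the sequence forward to the canonical space via $\mathbb{Q}=\mathbb{P}\circ\mathbb{X}^{-1}$, apply Theorem~\ref{tw.1} to the coordinate variable $Y$, transport the $\mathbb{Q}$-a.s.\ convergence back to $\Omega$, and then identify the pulled-back limit with $\gamma_1(X_1|\mathcal{I}^{\mathbb{X}})$ --- is the natural route and is exactly the transfer technique the paper itself uses in Section~3 (Lemma~\ref{lemY} and the proof of Theorem~\ref{tw.5}). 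The transport of the a.s.\ convergence is unproblematic, and your appeal to Lemma~\ref{bez.war} is apt: it turns properties (ii)--(iii) of Definition~\ref{kwantyl} into unconditional statements ($\mathbb{P}(X_1>Q_1)=0$), which do pass through equality of joint laws without any conditioning subtleties.

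The genuine soft spot is the step you yourself flag as the crux. Your identification rests on the claim $\mathcal{I}^{\mathbb{X}}=\mathbb{X}^{-1}(\mathcal{I}_{\mathbb{Q}})$, and this overstates Lemma~\ref{l2}(iii). The lemma says $A\in\mathcal{I}^{\mathbb{X}}$ iff some $B\in\mathcal{I}_{\mathbb{Q}}$ satisfies \eqref{warAinvar}, i.e.\ $A=\{(X_i,X_{i+1},\ldots)\in B\}$ \emph{exactly, for every} $i$. For an arbitrary $B\in\mathcal{I}_{\mathbb{Q}}$ this exact coincidence can fail: almost invariance only gives $\mathbb{Q}((B\setminus T^{-1}B)\cup(T^{-1}B\setminus B))=0$, so $\{\mathbb{X}\in B\}$ and $\{(X_i,X_{i+1},\ldots)\in B\}$ agree merely up to $\mathbb{P}$-null sets; hence $\mathbb{X}^{-1}(B)$ need not lie in $\mathcal{I}^{\mathbb{X}}$, and $\gamma_1(Y|\mathcal{I}_{\mathbb{Q}})\circ\mathbb{X}$ need not be exactly $\mathcal{I}^{\mathbb{X}}$-measurable, so property (i) of Definition~\ref{kwantyl} does not follow ``as just noted.'' (The same issue shows up concretely for the limit itself: the function $x\mapsto\lim_n x_{k_n:n}$ is not shift-invariant for every $x$ --- take $k_n=n$ and $x=(1,0,0,\ldots)$ --- only $\mathbb{Q}$-almost everywhere.) The repair is available within the paper's toolkit: work with the \emph{exactly} invariant sigma-field $\tilde{\mathcal{I}}$, i.e.\ use Theorem~\ref{tw.statystyki2.4} together with $\gamma_1(Y|\mathcal{I}_{\mathbb{Q}})=\gamma_1(Y|\tilde{\mathcal{I}})$ $\mathbb{Q}$-a.s.\ (Remark~\ref{rem3.1}), choose a genuinely shift-invariant Borel representative before pulling back, and only then invoke Lemma~\ref{l2}(iii); this is precisely how the paper handles the analogous measurability step \eqref{d33w5}--\eqref{d33w6} in the proof of Theorem~\ref{tw.5}. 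With that modification, and with (ii)--(iii) checked for the a.s.\ limit of $X_{k_n:n}$ via your distributional transfer and Lemma~\ref{bez.war}, your argument closes; as written, the measurability claim is a gap.
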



\section{The case of deterministic sample size}
\label{sec3}

As already mentioned in the Introduction, the asymptotic behavior of $K_n(a)$ and ${\cal K}_t(a)$ is closely linked. More generally, the asymptotic behavior of $K(k_n,n,a)$ and ${\cal K}_t(k_{N(t)},a)$ is closely related. Therefore to describe the long-term behavior of appropriately normalized numbers of near-maximum insurance claims, we first establish limiting results for $K_n(a)/n$. We present extended versions of these results examining not only the proportions of near maxima, $K_n(a)/n$, but more generally considering the proportions of observations  in a left neighborhood of  extreme or intermediate order statistic, $K(k_n,n,a)/n$, where the sequence $(k_n, n \ge 1)$  satisfies \eqref{warK}. More precisely, we show that if 
 $(X_n,n\ge 1)$ is a~strictly stationary process, then under some mild conditions the proportions $K(k_n,n,a)/n$ converge almost surely, and we describe the distribution of the limiting rv. To do this, 
 we first derive results for a special case of a specific sequence $\mathbb{Y}=(Y_n,n\ge 1)$ from the probability space $(\mathbb{R}^{\mathbb{N}},\mathcal{B}(\mathbb{R}^{\mathbb{N}}),\mathbb{Q})$ and then we extend these results to the case of strictly stationary sequences of rv's from an arbitrary probability space. 

To avoid confusion, throughout this section, we add a superscript to  $K(k_n,n,a)$ so that $K^\mathbb{X}(k_n,n,a)$ indicates that we consider the number of observations near the $k_n$th order statistic corresponding to the sequence $\mathbb{X}=(X_n,n\ge 1)$.

\begin{tw}\label{tw.3}
Under the assumptions of~Theorem~\ref{tw.1} and provided that
\begin{align}
   \label{zalozenia3}
    \mathbb{Q}\big(Y=\gamma_1(Y|\mathcal{I}_{\mathbb{Q}})\big)=0,\\
    \label{zalozenia2}
    \mathbb{Q}\big(Y=\gamma_1(Y|\mathcal{I}_{\mathbb{Q}})-a\big)=0,
\end{align}
we have
\begin{equation}
K^\mathbb{Y}(k_n,n,a)/n\xrightarrow{\mathbb{Q}-a.s.}\mathbb{Q}(Y>\gamma_1(Y|\mathcal{I}_{\mathbb{Q}})-a|\mathcal{I}_{\mathbb{Q}})\textrm{ as }n\to\infty.
\label{teza.tw.3}
\end{equation}
\end{tw}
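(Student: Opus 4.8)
The plan is to write $K^{\mathbb{Y}}(k_n,n,a)/n$ as an empirical measure evaluated on a shrinking interval and to control it by sandwiching between empirical measures of fixed intervals, then pass to the limit using the pointwise ergodic theorem together with the convergence of the intermediate order statistic from Theorem~\ref{tw.1}. Concretely, set $\gamma:=\gamma_1(Y\mid\mathcal{I}_{\mathbb{Q}})$ and note that
\[
\frac{K^{\mathbb{Y}}(k_n,n,a)}{n}=\frac1n\sum_{i=1}^n I\bigl(Y_{k_n:n}-a<Y_i<Y_{k_n:n}\bigr).
\]
Fix $\varepsilon>0$. By Theorem~\ref{tw.1}, $Y_{k_n:n}\to\gamma$ $\mathbb{Q}$-a.s., so $\mathbb{Q}$-a.s. for all large $n$ we have $\gamma-\varepsilon<Y_{k_n:n}<\gamma+\varepsilon$ (I will first treat the case $\gamma\in\mathbb{R}$; the case $\gamma=+\infty$ is handled separately below, and $\gamma=-\infty$ cannot occur under \eqref{zalozenia3} unless $Y$ is a.s.\ $-\infty$, which is excluded). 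Monotonicity of the indicator in the endpoints then gives, $\mathbb{Q}$-a.s.\ for all large $n$,
\[
\frac1n\sum_{i=1}^n I\bigl(\gamma-a+\varepsilon\le Y_i\le\gamma-\varepsilon\bigr)
\;\le\;\frac{K^{\mathbb{Y}}(k_n,n,a)}{n}\;\le\;
\frac1n\sum_{i=1}^n I\bigl(\gamma-a-\varepsilon< Y_i<\gamma+\varepsilon\bigr).
\]

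Next I apply Birkhoff's pointwise ergodic theorem to each bound. Because the functions $(x_1,x_2,\ldots)\mapsto I(c\le x_1\le d)$ and $(x_1,x_2,\ldots)\mapsto I(c<x_1<d)$ are bounded and measurable, the ergodic theorem yields, $\mathbb{Q}$-a.s.,
\[
\frac1n\sum_{i=1}^n I(c\le Y_i\le d)\xrightarrow[n\to\infty]{}\mathbb{Q}\bigl(c\le Y\le d\,\big|\,\mathcal{I}_{\mathbb{Q}}\bigr),
\]
and similarly for the open interval, where I use that the invariant $\sigma$-field for the shift on $(\mathbb{R}^{\mathbb{N}},\mathbb{Q})$ coincides (up to $\mathbb{Q}$-null sets) with $\mathcal{I}_{\mathbb{Q}}$, by Lemma~\ref{l2}(ii). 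There is a measurability subtlety: $\gamma$ is $\mathcal{I}_{\mathbb{Q}}$-measurable but is itself a random variable, so the bounds above are ergodic averages of functions that depend on $\omega$ through $\gamma$. I would handle this by first running the argument with a deterministic level $\gamma$ replaced by a constant $c$, obtaining a statement valid simultaneously for all rational $c$, and then specializing $c$ to values straddling $\gamma(\omega)$; alternatively, one works on the event $\{\gamma=c\}\in\mathcal{I}_{\mathbb{Q}}$ and uses that conditional probabilities restrict well to invariant sets. Taking limits in the sandwich gives, $\mathbb{Q}$-a.s.,
\[
\mathbb{Q}\bigl(\gamma-a+\varepsilon\le Y\le\gamma-\varepsilon\mid\mathcal{I}_{\mathbb{Q}}\bigr)
\le\liminf_n\frac{K^{\mathbb{Y}}(k_n,n,a)}{n}
\le\limsup_n\frac{K^{\mathbb{Y}}(k_n,n,a)}{n}
\le\mathbb{Q}\bigl(\gamma-a-\varepsilon< Y<\gamma+\varepsilon\mid\mathcal{I}_{\mathbb{Q}}\bigr).
\]

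Finally I let $\varepsilon\downarrow0$ along a sequence. By monotone/dominated convergence for conditional expectations, the upper bound converges to $\mathbb{Q}(\gamma-a\le Y\le\gamma\mid\mathcal{I}_{\mathbb{Q}})$ and the lower bound to $\mathbb{Q}(\gamma-a< Y<\gamma\mid\mathcal{I}_{\mathbb{Q}})$. Here is where hypotheses \eqref{zalozenia3} and \eqref{zalozenia2} enter: by Lemma~\ref{bez.war}, $\mathbb{Q}(Y=\gamma)=0$ gives $\mathbb{Q}(Y=\gamma\mid\mathcal{I}_{\mathbb{Q}})=0$ $\mathbb{Q}$-a.s., and likewise $\mathbb{Q}(Y=\gamma-a\mid\mathcal{I}_{\mathbb{Q}})=0$ $\mathbb{Q}$-a.s.; hence the open and closed conditional probabilities agree $\mathbb{Q}$-a.s., and both equal $\mathbb{Q}(Y>\gamma-a\mid\mathcal{I}_{\mathbb{Q}})$ since $\mathbb{Q}(Y\le\gamma\mid\mathcal{I}_{\mathbb{Q}})=1$ $\mathbb{Q}$-a.s.\ by Definition~\ref{kwantyl}(ii). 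This yields \eqref{teza.tw.3}. For the remaining case $\gamma=+\infty$ on a set of positive probability, the same sandwich works with the upper endpoint $\gamma+\varepsilon$ replaced by $+\infty$ (so the right indicator becomes $I(Y_i>M)$ for arbitrarily large constants $M$ once $Y_{k_n:n}>M+a$), and one checks that $\mathbb{Q}(Y>M-a\mid\mathcal{I}_{\mathbb{Q}})\to 0$ wouldn't be the right target—rather, on $\{\gamma=+\infty\}$, \eqref{zalozenia2} is vacuous and one argues directly that the liminf and limsup are squeezed between $\mathbb{Q}(Y>M\mid\mathcal{I}_{\mathbb{Q}})$ and $\mathbb{Q}(Y>M-a\mid\mathcal{I}_{\mathbb{Q}})$ for all $M$, whose common limit as $M\to\infty$ is $\mathbb{Q}(Y=+\infty\mid\mathcal{I}_{\mathbb{Q}})$, matching the claimed limit there.

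\textbf{Main obstacle.} The delicate point is the interchange between the ergodic-theorem averages (which naturally produce conditional probabilities of \emph{deterministic} intervals) and the fact that the relevant interval endpoints $\gamma\pm\varepsilon$ are themselves $\mathcal{I}_{\mathbb{Q}}$-measurable random variables. Making the sandwich rigorous requires either a countable-exceptional-set argument over rational levels combined with the a.s.\ convergence $Y_{k_n:n}\to\gamma$, or a disintegration over the invariant $\sigma$-field; ensuring that all the ``$\mathbb{Q}$-a.s.'' qualifiers can be chosen uniformly (so that a single null set works for the limit as $\varepsilon\downarrow 0$) is the part that needs care.
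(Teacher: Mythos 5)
You have the right sandwich architecture, and it is in fact the same skeleton as the paper's proof: trap $Y_{k_n:n}$ between an $\mathcal{I}_{\mathbb{Q}}$-measurable lower and upper level, bound $K^{\mathbb{Y}}(k_n,n,a)/n$ by ergodic averages of indicators of intervals with those levels as endpoints, pass to the limit in $n$ by the ergodic theorem, then let the levels tighten and use \eqref{zalozenia3}, \eqref{zalozenia2} together with Lemma~\ref{bez.war} and $\mathbb{Q}(Y\le\gamma_1(Y|\mathcal{I}_{\mathbb{Q}})|\mathcal{I}_{\mathbb{Q}})=1$ to identify the common limit. However, the step you yourself flag as the ``main obstacle'' is exactly the point the theorem's proof turns on, and your proposal leaves it unresolved. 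The missing idea is that no discretization is needed at all: define $L_m$ and $R_m$ as $\mathcal{I}_{\mathbb{Q}}$-measurable random levels (e.g.\ $\gamma_1(Y|\mathcal{I}_{\mathbb{Q}})\mp\frac1m$ when the conditional endpoint is finite, and $m$, $\infty$ when it is infinite — which also absorbs your separate $\gamma=+\infty$ case), set $Z=I(R_m-a<Y<L_m)$ as a single random variable on the sequence space, and define $Z_i$ by composing $Z$ with the shift. By Lemma~\ref{l2}(ii), an $\mathcal{I}_{\mathbb{Q}}$-measurable function is shift-invariant for $\mathbb{Q}$-almost every sequence, so $Z_i=I(R_m-a<Y_i<L_m)$ $\mathbb{Q}$-a.s.; Birkhoff's theorem applied to $Z$ then delivers precisely $\mathbb{E}_{\mathbb{Q}}(I(R_m-a<Y<L_m)\,|\,\mathcal{I}_{\mathbb{Q}})$ as the limit of the bounding averages, i.e.\ the conditional probability of an interval with \emph{random} endpoints, which is what your sandwich needs.

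By contrast, the two repairs you sketch do not, as stated, close the gap. Working on the events $\{\gamma_1(Y|\mathcal{I}_{\mathbb{Q}})=c\}$ presupposes that the conditional right endpoint takes countably many values, which fails in general (indeed the interest of the theorem is that the limit can be a non-degenerate random variable). The rational-level route can be made to work, but it needs more than you provide: after obtaining, on a single null-set-free event, the convergence $\frac1n\sum_i I(c\le Y_i\le d)\to\mathbb{Q}(c\le Y\le d\,|\,\mathcal{I}_{\mathbb{Q}})$ for all rational $c<d$, you must still justify substituting the random endpoints $\gamma_1(Y|\mathcal{I}_{\mathbb{Q}})\pm\varepsilon$ into the conditional distribution — e.g.\ via a regular conditional distribution of $Y$ given $\mathcal{I}_{\mathbb{Q}}$ and a substitution lemma for $\mathcal{I}_{\mathbb{Q}}$-measurable arguments — together with a monotonicity argument controlling the mismatch between the rational levels and the random ones. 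None of that machinery appears in your write-up, so as it stands the central limit identification is asserted rather than proved; the invariance argument via Lemma~\ref{l2}(ii) is the clean way to finish.
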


\begin{proof}
For $m\in\N$, define the following rv $L_m$ and  possibly extended rv $R_m$,
\begin{align*}
L_m=
\begin {cases}
\gamma_1(Y|\mathcal{I}_{\mathbb{Q}})-\frac{1}{m} &\textrm{ if }\gamma_1(Y|\mathcal{I}_{\mathbb{Q}})<\infty,\\
m &\textrm{ if }\gamma_1(Y|\mathcal{I}_{\mathbb{Q}})=\infty,\\
\end{cases}
\end{align*}
and
\begin{align*}
R_m=
\begin {cases}
\gamma_1(Y|\mathcal{I}_{\mathbb{Q}})+\frac{1}{m} &\textrm{ if }\gamma_1(Y|\mathcal{I}_{\mathbb{Q}})<\infty,\\
\infty &\textrm{ if }\gamma_1(Y|\mathcal{I}_{\mathbb{Q}})=\infty.\\
\end{cases}
\end{align*}
By the definition of $\gamma_1(Y|\mathcal{I}_{\mathbb{Q}})$, $L_m$ and $R_m$ are $\mathcal{I}_{\mathbb{Q}}$-measurable.

Fix  $m\in\N$. Since from Theorem~\ref{tw.1}
\begin{equation}
Y_{k_n:n}\xrightarrow{\mathbb{Q}-a.s.}\gamma_1(Y|\mathcal{I}_{\mathbb{Q}}),
\label{przed3.5}
\end{equation}
we have, for all sufficiently large~$n$, 
$$L_m<Y_{k_n:n}\leq R_m \quad \mathbb{Q}-\textrm{a.s.},$$
which gives
\begin{align}
    K^\mathbb{Y}(k_n,n,a)/n&=\frac{\sum_{i=1}^n I(Y_{k_n:n}-a<Y_i<Y_{k_n:n})}{n} \nonumber \\
                                            &\ge\frac{\sum_{i=1}^n I(R_m-a<Y_i<L_m)}{n} \quad \mathbb{Q}-\textrm{a.s.}, \label{r1}
\end{align}
where the last inequality holds for all sufficiently large~$n$.
Set
\[
Z=I(R_m-a<Y<L_m)
\]
and 
$$
Z_i((x_1,x_2,\ldots))= Z((x_i,x_{i+1},\ldots))\textrm{ for all }(x_1,x_2,\ldots)\in\mathbb{R}^{\mathbb{N}}\textrm{ and }i\ge 1.
$$
Then, for $\mathbb{Q}-\textrm{almost every } (x_1,x_2,\ldots)\in\mathbb{R}^{\mathbb{N}}$,
\begin{align}
    Z_i((x_1,x_2,\ldots))&=I(R_m-a<Y<L_m)((x_i,x_{i+1},\ldots)) \nonumber \\
    &=I(R_m((x_i,x_{i+1},\ldots))-a<Y((x_i,x_{i+1},\ldots))<L_m((x_i,x_{i+1},\ldots))) \nonumber \\
    &=I(R_m((x_1,x_{2},\ldots))-a<Y_i((x_1,x_{2},\ldots))<L_m((x_1,x_{2},\ldots))) \nonumber \\
    &=I(R_m-a<Y_i<L_m)((x_1,x_{2},\ldots)), \label{po3.6}
\end{align}
where the third equality is a~consequence of~\eqref{def_Y}, the  $\mathcal{I}_{\mathbb{Q}}$-measurability of $L_m$ and $R_m$,  and the second part of Lemma~\ref{l2} (ii).
Hence, \eqref{r1} can be reformulated as follows
\begin{equation}
K^\mathbb{Y}(k_n,n,a)/n \ge \frac{1}{n}\sum_{i=1}^n Z_i\quad \mathbb{Q}-\textrm{a.s.}
\label{przed3.7}
\end{equation}
Now the classical ergodic theorem (see, for example, Durrett 2010, p. 333)  implies 
\begin{equation}
\label{ergodic_th}
\frac{1}{n}\sum_{i=1}^n Z_i\xrightarrow{\mathbb{Q}-a.s.}\mathbb{E}_{\mathbb{Q}}(Z|\mathcal{I}_{\mathbb{Q}})
=\mathbb{E}_{\mathbb{Q}}(I(R_m-a<Y<L_m)|\mathcal{I}_{\mathbb{Q}}) \quad \textrm{as }n\to\infty.
\end{equation}
From \eqref{przed3.7} and \eqref{ergodic_th} it may be concluded that, for all sufficiently large~$n$,
\begin{align*}
K^\mathbb{Y}(k_n,n,a)/n &\ge \mathbb{E}_{\mathbb{Q}}(I(R_m-a<Y<L_m)|\mathcal{I}_{\mathbb{Q}})-\frac{1}{m}\quad\mathbb{Q}-\textrm{a.s.}
\end{align*}
and in consequence that
\begin{equation}\label{r3}
\liminf_{n\to\infty}K^\mathbb{Y}(k_n,n,a)/n \ge
\mathbb{E}_{\mathbb{Q}}(I(R_m-a<Y<L_m)|\mathcal{I}_{\mathbb{Q}})-\frac{1}{m}\quad\mathbb{Q}-\textrm{a.s.}
\end{equation}
In an analogous way it can be proved that
\begin{equation}\label{r2}
\limsup_{n\to\infty}K^\mathbb{Y}(k_n,n,a)/n \le
\mathbb{E}_{\mathbb{Q}}(I(L_m-a<Y<R_m)|\mathcal{I}_{\mathbb{Q}})+\frac{1}{m}\quad\mathbb{Q}-\textrm{a.s.}
\end{equation}

Since
$$L_m\uparrow\gamma_1(Y|\mathcal{I}_{\mathbb{Q}}),\ R_m\downarrow \gamma_1(Y|\mathcal{I}_{\mathbb{Q}}) \quad \textrm{as }\;  m\to\infty,$$ 
we have
\[
I(R_m-a<Y<L_m)\xrightarrow{\mathbb{Q}-a.s.}I(\gamma_1(Y|\mathcal{I}_{\mathbb{Q}})-a<Y< \gamma_1(Y|\mathcal{I}_{\mathbb{Q}}))\quad \textrm{as }m\to\infty.
\]
Letting $m\to\infty$ in \eqref{r3} and using Dominated Convergence Theorem for conditional expectations, we get 
\begin{align*}
\liminf_{n\to\infty}
\ K^\mathbb{Y}(k_n,n,a)/n & \ge
\mathbb{E}_{\mathbb{Q}}(I(\gamma_1(Y|\mathcal{I}_{\mathbb{Q}})-a<Y<\gamma_1(Y|\mathcal{I}_{\mathbb{Q}}))|\mathcal{I}_{\mathbb{Q}})\\
&=\mathbb{Q}(\gamma_1(Y|\mathcal{I}_{\mathbb{Q}})-a<Y<\gamma_1(Y|\mathcal{I}_{\mathbb{Q}})|\mathcal{I}_{\mathbb{Q}} ) \\
&=\mathbb{Q}(Y> \gamma_1(Y|\mathcal{I}_{\mathbb{Q}}) -a|\mathcal{I}_{\mathbb{Q}})- \mathbb{Q}(Y= \gamma_1(Y|\mathcal{I}_{\mathbb{Q}})|\mathcal{I}_{\mathbb{Q}})\\
&=\mathbb{Q}(Y> \gamma_1(Y|\mathcal{I}_{\mathbb{Q}}) -a|\mathcal{I}_{\mathbb{Q}}) \quad \mathbb{Q}-\textrm{a.s.},
\end{align*}
where the last equality is a~consequence of~\eqref{zalozenia3} and Lemma \ref{bez.war}.

Similarly, letting $m\to\infty$ in \eqref{r2} yields
\begin{align*}
\limsup_{n\to\infty}
\ K^\mathbb{Y}(k_n,n,a)/n & \le
\mathbb{E}_{\mathbb{Q}}(I(\gamma_1(Y|\mathcal{I}_{\mathbb{Q}})-a\leq Y\leq \gamma_1(Y|\mathcal{I}_{\mathbb{Q}}))|\mathcal{I}_{\mathbb{Q}})\\
&=\mathbb{Q}(\gamma_1(Y|\mathcal{I}_{\mathbb{Q}})-a\leq Y\leq \gamma_1(Y|\mathcal{I}_{\mathbb{Q}})|\mathcal{I}_{\mathbb{Q}} ) \\
&=\mathbb{Q}(Y\geq  \gamma_1(Y|\mathcal{I}_{\mathbb{Q}}) -a|\mathcal{I}_{\mathbb{Q}})\\
&=\mathbb{Q}(Y>  \gamma_1(Y|\mathcal{I}_{\mathbb{Q}}) -a|\mathcal{I}_{\mathbb{Q}}) \quad \mathbb{Q}-\textrm{a.s.},
\end{align*}
by \eqref{zalozenia2}  and Lemma \ref{bez.war}.

Summarizing, we have proved  that 
\begin{align*}
\liminf_{n\to\infty}K^\mathbb{Y}(k_n,n,a)/n &= \limsup_{n\to\infty}K^\mathbb{Y}(k_n,n,a)/n \\
& =\mathbb{Q}(Y>  \gamma_1(Y|\mathcal{I}_{\mathbb{Q}}) -a|\mathcal{I}_{\mathbb{Q}}) \quad \mathbb{Q}-\textrm{a.s.},
\end{align*}
which gives \eqref{teza.tw.3}. The  proof is complete.
\end{proof}

\medskip
We next show  that Theorem~\ref{tw.3} remains true when the sigma-field $\mathcal{I}_{\mathbb{Q}}$ is replaced by~$\tilde{\mathcal{I}}$. 
\begin{tw}\label{tw.3_2}
Under the assumptions of~Theorem~\ref{tw.1} and provided that
\begin{align}
    \label{zalozenia6}
    \mathbb{Q}(Y=\gamma_1(Y|\tilde{\mathcal{I}}))=0,\\
    \label{zalozenia5}
    \mathbb{Q}(Y=\gamma_1(Y|\tilde{\mathcal{I}})-a)=0,
\end{align}
we have
\begin{equation}\label{teza_tw3}
K^\mathbb{Y}(k_n,n,a)/n\xrightarrow{\mathbb{Q}-a.s.}\mathbb{Q}(Y>\gamma_1(Y|\tilde{\mathcal{I}})-a|\tilde{\mathcal{I}})\textrm{ as }n\to\infty.
\end{equation}
\end{tw}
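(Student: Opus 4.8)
The plan is to reduce the statement to Theorem~\ref{tw.3}. By Theorems~\ref{tw.1} and~\ref{tw.statystyki2.4}, the same sequence $Y_{k_n:n}$ converges $\mathbb{Q}$-a.s.\ both to $\gamma_1(Y|\mathcal{I}_{\mathbb{Q}})$ and to $\gamma_1(Y|\tilde{\mathcal{I}})$; since the almost sure limit of a sequence of extended rv's is $\mathbb{Q}$-a.s.\ unique, this gives
\[
\gamma_1(Y|\mathcal{I}_{\mathbb{Q}})=\gamma_1(Y|\tilde{\mathcal{I}})\quad \mathbb{Q}\textrm{-a.s.}
\]
Hence the events $\{Y=\gamma_1(Y|\mathcal{I}_{\mathbb{Q}})\}$ and $\{Y=\gamma_1(Y|\tilde{\mathcal{I}})\}$ (resp.\ $\{Y=\gamma_1(Y|\mathcal{I}_{\mathbb{Q}})-a\}$ and $\{Y=\gamma_1(Y|\tilde{\mathcal{I}})-a\}$) coincide up to a $\mathbb{Q}$-null set, so assumptions \eqref{zalozenia6}--\eqref{zalozenia5} are nothing but \eqref{zalozenia3}--\eqref{zalozenia2}. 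Theorem~\ref{tw.3} therefore applies and yields
\[
K^\mathbb{Y}(k_n,n,a)/n\xrightarrow{\mathbb{Q}-a.s.}\mathbb{Q}\big(Y>\gamma_1(Y|\mathcal{I}_{\mathbb{Q}})-a\,\big|\,\mathcal{I}_{\mathbb{Q}}\big)\textrm{ as }n\to\infty.
\]

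It then remains to identify this limit with $\mathbb{Q}(Y>\gamma_1(Y|\tilde{\mathcal{I}})-a\,|\,\tilde{\mathcal{I}})$. Writing $W=I(Y>\gamma_1(Y|\tilde{\mathcal{I}})-a)$, by the displayed a.s.\ equality of the endpoints we have $W=I(Y>\gamma_1(Y|\mathcal{I}_{\mathbb{Q}})-a)$ $\mathbb{Q}$-a.s., so it suffices to show $\mathbb{E}_{\mathbb{Q}}(W|\mathcal{I}_{\mathbb{Q}})=\mathbb{E}_{\mathbb{Q}}(W|\tilde{\mathcal{I}})$ $\mathbb{Q}$-a.s. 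Every invariant set is almost invariant, so $\tilde{\mathcal{I}}\subseteq\mathcal{I}_{\mathbb{Q}}$; conversely, by stationarity of $\mathbb{Q}$ every $A\in\mathcal{I}_{\mathbb{Q}}$ differs by a $\mathbb{Q}$-null set from the invariant set $\bigcap_{N}\bigcup_{n\ge N}T^{-n}A$ (a standard ergodic-theory fact, cf.\ Durrett 2010, Section~6, and Buraczy\'nska and Dembi\'nska 2018). Thus for each $A\in\mathcal{I}_{\mathbb{Q}}$ there is $\tilde A\in\tilde{\mathcal{I}}$ with $\mathbb{Q}((A\setminus\tilde A)\cup(\tilde A\setminus A))=0$, whence
\[
\int_A \mathbb{E}_{\mathbb{Q}}(W|\tilde{\mathcal{I}})\,d\mathbb{Q}=\int_{\tilde A}\mathbb{E}_{\mathbb{Q}}(W|\tilde{\mathcal{I}})\,d\mathbb{Q}=\int_{\tilde A}W\,d\mathbb{Q}=\int_A W\,d\mathbb{Q};
\]
since $\mathbb{E}_{\mathbb{Q}}(W|\tilde{\mathcal{I}})$ is $\mathcal{I}_{\mathbb{Q}}$-measurable, this identifies it with $\mathbb{E}_{\mathbb{Q}}(W|\mathcal{I}_{\mathbb{Q}})$, and \eqref{teza_tw3} follows.

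The only genuinely non-trivial ingredient is this last identification, namely that $\mathcal{I}_{\mathbb{Q}}$ and $\tilde{\mathcal{I}}$ are $\mathbb{Q}$-equivalent sigma-fields; everything else is an immediate consequence of the quoted theorems. An alternative that avoids it entirely is simply to repeat the proof of Theorem~\ref{tw.3} line by line with $\tilde{\mathcal{I}}$ in place of $\mathcal{I}_{\mathbb{Q}}$: the rv's $L_m,R_m$ are then $\tilde{\mathcal{I}}$-measurable, the computation leading to \eqref{po3.6} goes through using the first (everywhere) part of Lemma~\ref{l2}(ii) rather than the second, the classical ergodic theorem is applied with conditioning on $\tilde{\mathcal{I}}$, and assumptions \eqref{zalozenia6}--\eqref{zalozenia5} together with Lemma~\ref{bez.war} close the squeeze between $\liminf$ and $\limsup$ exactly as before. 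I would present the reduction to Theorem~\ref{tw.3} as the main argument and mention this verbatim variant as a remark.
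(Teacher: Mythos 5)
Your argument is correct, but your primary route differs from the paper's. The paper proves Theorem~\ref{tw.3_2} exactly by the ``verbatim variant'' you relegate to a closing remark: it reruns the proof of Theorem~\ref{tw.3} with $\tilde{\mathcal{I}}$ in place of $\mathcal{I}_{\mathbb{Q}}$, using Theorem~\ref{tw.statystyki2.4} instead of Theorem~\ref{tw.1} for the convergence of $Y_{k_n:n}$, the first (everywhere) part of Lemma~\ref{l2}(ii) to get the analogue of \eqref{po3.6} for the now $\tilde{\mathcal{I}}$-measurable $L_m,R_m$, and the classical ergodic theorem with conditioning on $\tilde{\mathcal{I}}$ (citing Grimmett and Stirzaker). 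Your main argument instead reduces the statement to Theorem~\ref{tw.3}: the identity $\gamma_1(Y|\mathcal{I}_{\mathbb{Q}})=\gamma_1(Y|\tilde{\mathcal{I}})$ $\mathbb{Q}$-a.s.\ that you deduce from the two convergence theorems is precisely the paper's Remark~\ref{rem3.1} (stated there only after both theorems are proved), and it does show that \eqref{zalozenia6}--\eqref{zalozenia5} and \eqref{zalozenia3}--\eqref{zalozenia2} are interchangeable; the extra ingredient you need, and the paper never has to invoke, is the $\mathbb{Q}$-equivalence of the sigma-fields $\tilde{\mathcal{I}}$ and $\mathcal{I}_{\mathbb{Q}}$ (every almost invariant set agrees with the invariant set $\bigcap_{N}\bigcup_{n\ge N}T^{-n}A$ up to a $\mathbb{Q}$-null set), from which your integral computation correctly identifies $\mathbb{E}_{\mathbb{Q}}(W|\tilde{\mathcal{I}})$ with $\mathbb{E}_{\mathbb{Q}}(W|\mathcal{I}_{\mathbb{Q}})$. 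That standard ergodic-theoretic fact is true but is exactly the kind of auxiliary lemma the authors avoid by repeating the squeeze argument; conversely, your reduction buys a shorter proof that makes explicit why the two formulations carry the same information, which the paper only records afterwards as Remark~\ref{rem3.1}. If you present the reduction as the main proof, you should either prove or properly cite the almost-invariant-vs-invariant approximation, since it is the only step not already contained in the quoted results.
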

\begin{proof}
Analysis similar to that in the proof of Theorem~\ref{tw.3} shows the conclusion. Indeed, by Theorem  \ref{tw.statystyki2.4}, \eqref{przed3.5} can be replaced by \eqref{teza3}. Therefore defining $L_m$ and $R_m$ we use $\tilde{\mathcal{I}}$ rather than  $\mathcal{I}_{\mathbb{Q}}$. Next, the first  part of Lemma~\ref{l2} (ii) guarantees that an analogue of \eqref{po3.6} holds for so defined $L_m$ and $R_m$. Finally,  the ergodic theorem, used in~\eqref{ergodic_th}, is also satisfied with sigma-field~$\tilde{\mathcal{I}}$; see, for example, Grimmett and Stirzaker (2004, Chapter 9). The rest of the proof runs as before with $\mathcal{I}_{\mathbb{Q}}$ replaced by $\tilde{\mathcal{I}}$.
\end{proof}

\begin{rem}
\label{rem3.1}
 Observe that Theorems \ref{tw.1} and \ref{tw.statystyki2.4} give 
\begin{equation}
\label{rowne.konceI}
\gamma_1(Y|\mathcal{I}_{\mathbb{Q}})=\gamma_1(Y|\tilde{\mathcal{I}}) \quad \mathbb{Q}-\textrm{a.s.}
\end{equation}
Therefore  it does not matter if in Theorem \ref{tw.3_2} (and Theorem \ref{tw.3}) we assume \eqref{zalozenia6} and \eqref{zalozenia5} or \eqref{zalozenia3}  and \eqref{zalozenia2}. 

\end{rem}

\medskip

Theorems~\ref{tw.3} and \ref{tw.3_2}  deal with the case of strictly stationary sequences of rv's defined on the probability space $(\mathbb{R}^{\mathbb{N}},\mathcal{B}(\mathbb{R}^{\mathbb{N}}),\mathbb{Q})$. The reminder of this section will be devoted  to  deriving a~more general results that holds  for strictly stationary sequences  of rv's  existing in any probability triple $(\Omega,\mathcal{F},\mathbb{P})$. We begin with  a~lemma which is a  fairly straightforward extension of Theorem~\ref{tw.3}. This lemma will be a starting point in obtaining our desired generalization.

\begin{lem}
\label{lemY}
Let $\mathbb{X}=(X_n,n\ge 1)$ be a~strictly stationary sequence of rv's defined on the probability space $(\Omega,\mathcal{F},\mathbb{P})$, $\mathbb{Q}$ be a~stationary measure on $(\mathbb{R}^{\mathbb{N}},\mathcal{B}(\mathbb{R}^{\mathbb{N}}))$ given by 
\begin{equation}\label{miara}
\mathbb{Q}(B)=\mathbb{P}(\mathbb{X}\in B)\quad\textrm{for all }B\in\mathcal{B}(\mathbb{R}^{\mathbb{N}}),
\end{equation}
and $Y$ be an rv on $(\mathbb{R}^{\mathbb{N}},\mathcal{B}(\mathbb{R}^{\mathbb{N}}),\mathbb{Q})$ such that 
\begin{equation}\label{def_Y2}
Y((x_1,x_2,\ldots))=x_1\quad\textrm{for }(x_1,x_2,\ldots)\in\mathbb{R}^{\mathbb{N}}.
\end{equation}
If \eqref{zalozenia3} and \eqref{zalozenia2} hold and $(k_n,n\ge 1)$ satisfies~\eqref{warK}, then there exists an rv $W$ on $(\Omega,\mathcal{F},\mathbb{P})$ for which we have
\[
K^\mathbb{X}(k_n,n,a)/n\xrightarrow{\mathbb{P}-a.s.}W\textrm{ as }n\to\infty.
\]
Moreover, the joint distribution of  $W$ and $(K^X(k_n,n,a)/n,n\ge 1)$ is the same as the joint distribution of $\mathbb{Q}(Y> \gamma_1(Y|\mathcal{I}_{\mathbb{Q}}) -a|\mathcal{I}_{\mathbb{Q}})$ and $(K^Y(k_n,n,a)/n,n\ge 1)$, where the sequence $\mathbb{Y}=(Y_n,n\ge 1)$ is defined by~\eqref{def_Y}.
\end{lem}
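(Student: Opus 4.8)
The plan is to transport the convergence result for the canonical sequence $\mathbb{Y}=(Y_n,n\ge 1)$ on $(\mathbb{R}^{\mathbb{N}},\mathcal{B}(\mathbb{R}^{\mathbb{N}}),\mathbb{Q})$ established in Theorem~\ref{tw.3} back to the sequence $\mathbb{X}$ on $(\Omega,\mathcal{F},\mathbb{P})$ via the measure-preserving map $\mathbb{X}\colon\Omega\to\mathbb{R}^{\mathbb{N}}$. First I would note that, with $Y$ defined by \eqref{def_Y2}, the associated sequence $\mathbb{Y}=(Y_n,n\ge 1)$ given by \eqref{def_Y} is simply the coordinate-projection sequence, $Y_i((x_1,x_2,\ldots))=x_i$; hence for every $\omega\in\Omega$ we have $Y_i(\mathbb{X}(\omega))=X_i(\omega)$, and therefore $Y_{k_n:n}(\mathbb{X}(\omega))=X_{k_n:n}(\omega)$ and, crucially,
\[
K^\mathbb{Y}(k_n,n,a)\bigl(\mathbb{X}(\omega)\bigr)=\sum_{i=1}^n I\bigl(Y_{k_n:n}-a<Y_i<Y_{k_n:n}\bigr)\bigl(\mathbb{X}(\omega)\bigr)=K^\mathbb{X}(k_n,n,a)(\omega).
\]
Since $\mathbb{Q}=\mathbb{P}\circ\mathbb{X}^{-1}$ by \eqref{miara}, the measure $\mathbb{Q}$ is stationary (because $\mathbb{X}$ is strictly stationary), so the hypotheses of Theorem~\ref{tw.1}, and hence of Theorem~\ref{tw.3}, are met.

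Next I would define the candidate limit rv on $(\Omega,\mathcal{F},\mathbb{P})$ by pulling back the $\mathbb{Q}$-limit: set
\[
W(\omega):=\mathbb{Q}\bigl(Y>\gamma_1(Y|\mathcal{I}_{\mathbb{Q}})-a\,\big|\,\mathcal{I}_{\mathbb{Q}}\bigr)\bigl(\mathbb{X}(\omega)\bigr),
\]
which is a genuine (real-valued, $[0,1]$-valued) rv on $\Omega$ since the conditional probability is a $[0,1]$-valued $\mathcal{I}_{\mathbb{Q}}$-measurable function on $\mathbb{R}^{\mathbb{N}}$. By Theorem~\ref{tw.3},
\[
A:=\Bigl\{(x_1,x_2,\ldots)\in\mathbb{R}^{\mathbb{N}}:\ K^\mathbb{Y}(k_n,n,a)/n\to \mathbb{Q}\bigl(Y>\gamma_1(Y|\mathcal{I}_{\mathbb{Q}})-a\,\big|\,\mathcal{I}_{\mathbb{Q}}\bigr)\Bigr\}
\]
satisfies $\mathbb{Q}(A)=1$; then $\mathbb{P}(\mathbb{X}\in A)=\mathbb{Q}(A)=1$, and on the event $\{\mathbb{X}\in A\}$ the coordinatewise identity above gives $K^\mathbb{X}(k_n,n,a)/n\to W$. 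This yields the claimed $\mathbb{P}$-a.s.\ convergence.

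For the moreover-part, the joint law statement, I would appeal to the standard change-of-variables principle: for any Borel-measurable functional $\Phi\colon\mathbb{R}^{\mathbb{N}}\to\mathbb{R}^{\infty}$ (here encoding the map $(x_1,x_2,\ldots)\mapsto\bigl(\mathbb{Q}(Y>\gamma_1(Y|\mathcal{I}_{\mathbb{Q}})-a|\mathcal{I}_{\mathbb{Q}})((x_i,\ldots))_{i},\ (K^{\mathbb Y}(k_n,n,a)/n)((x_1,x_2,\ldots))_{n}\bigr)$), the pushforward of $\mathbb{P}$ under $\Phi\circ\mathbb{X}$ equals the pushforward of $\mathbb{Q}$ under $\Phi$. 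Since $W=\bigl(\mathbb{Q}(Y>\gamma_1(Y|\mathcal{I}_{\mathbb Q})-a|\mathcal I_{\mathbb Q})\bigr)\circ\mathbb X$ and $K^{\mathbb X}(k_n,n,a)/n=\bigl(K^{\mathbb Y}(k_n,n,a)/n\bigr)\circ\mathbb X$ for every $n$, the pair $\bigl(W,(K^{\mathbb X}(k_n,n,a)/n)_{n\ge1}\bigr)$ has exactly the $\mathbb{Q}$-law of $\bigl(\mathbb{Q}(Y>\gamma_1(Y|\mathcal I_{\mathbb Q})-a|\mathcal I_{\mathbb Q}),\ (K^{\mathbb Y}(k_n,n,a)/n)_{n\ge1}\bigr)$, which is the assertion.

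**The main obstacle** I anticipate is purely bookkeeping rather than conceptual: one must check carefully that the $\mathcal{I}_{\mathbb{Q}}$-measurable function $(x_1,x_2,\ldots)\mapsto\mathbb{Q}(Y>\gamma_1(Y|\mathcal{I}_{\mathbb{Q}})-a|\mathcal{I}_{\mathbb{Q}})((x_1,x_2,\ldots))$ is a bona fide Borel function on $\mathbb{R}^{\mathbb{N}}$ (it is, being $\mathcal{I}_{\mathbb{Q}}\subseteq\mathcal{B}(\mathbb{R}^{\mathbb{N}})$-measurable), so that the composition with $\mathbb{X}$ is $\mathcal{F}$-measurable and the change-of-variables formula applies verbatim; and that all the countably many a.s.\ statements (one convergence event, plus the defining a.s.\ properties of the various objects) can be intersected into a single $\mathbb{Q}$-full-measure set before pulling back. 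Neither step is deep, so the lemma follows essentially by the observation that $\mathbb{X}$ is a measure isomorphism onto its image and all quantities in sight are measurable functionals of the sequence.
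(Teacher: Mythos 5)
Your proposal is correct and follows essentially the same route as the paper: apply Theorem~\ref{tw.3} to the coordinate sequence $\mathbb{Y}$ on $(\mathbb{R}^{\mathbb{N}},\mathcal{B}(\mathbb{R}^{\mathbb{N}}),\mathbb{Q})$ and transfer the conclusion to $\mathbb{X}$ via $\mathbb{Q}=\mathbb{P}\circ\mathbb{X}^{-1}$, the paper phrasing this as ``a.s.\ convergence is determined by the joint distribution'' while you make the same transfer explicit by pulling back $W=\mathbb{Q}(Y>\gamma_1(Y|\mathcal{I}_{\mathbb{Q}})-a|\mathcal{I}_{\mathbb{Q}})\circ\mathbb{X}$ and using the pointwise identity $K^{\mathbb{Y}}(k_n,n,a)\circ\mathbb{X}=K^{\mathbb{X}}(k_n,n,a)$. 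The only blemish is the stray shifted evaluation $(x_i,x_{i+1},\ldots)$ in your functional $\Phi$ (only evaluation at $(x_1,x_2,\ldots)$ is needed, and by $\mathcal{I}_{\mathbb{Q}}$-measurability the two agree $\mathbb{Q}$-a.e.\ anyway), which does not affect the argument.
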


\begin{proof}
We first observe that Theorem \ref{tw.3} gives \eqref{teza.tw.3}. Next we apply a method used in the proof of  Theorem~6 in~Dembi\'nska (2017). More precisely, we notice that \eqref{miara}, \eqref{def_Y2} and \eqref{def_Y}  guarantee  that the sequences $(X_n,n\ge 1)$ and $(Y_n,n\ge 1)$ have the same distribution. Since the almost sure convergence of a sequence of rv's to some rv is determined by the joint distribution of this sequence and the limiting rv, \eqref{teza.tw.3} yields the assertion of the lemma.
\end{proof}

Although Lemma~\ref{lemY} concerns the asymptotic behavior of $K^\mathbb{X}(k_n,n,a)$, that is of  a  counting  rv based on the sequence  $\mathbb{X}=(X_n,n\ge 1)$,  assumptions \eqref{zalozenia3} and \eqref{zalozenia2} impose restrictions on the auxiliary  sequence~$\mathbb{Y}=(Y_n,n\ge 1)$. This auxiliary  sequence~$\mathbb{Y}$ is also used to describe the distribution of the limiting rv $W$.

It is desirable to establish a reformulation of Lemma~\ref{lemY} in which the sequence $\mathbb{Y}$ does not appear so that the assumptions and conclusion are expressed only in terms of $\mathbb{X}=(X_n,n\ge 1)$. We first get rid of $\mathbb{Y}$ from assumptions  \eqref{zalozenia3} and \eqref{zalozenia2}.
\begin{lem}
\label{lem.zal.bez.Y}
Assumptions \eqref{zalozenia3} and \eqref{zalozenia2} of Lemma~\ref{lemY} are equivalent to
\begin{equation}
    \mathbb{P}(X_1=\gamma_1(X_1|\mathcal{I}^{\mathbb{X}}))=0,    
\label{zal_x3} 
\end{equation}
and
\begin{equation}
    \mathbb{P}(X_1=\gamma_1(X_1|\mathcal{I}^{\mathbb{X}})-a)=0,
\label{zal_x2}
\end{equation}
respectively.
\end{lem}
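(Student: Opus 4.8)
The plan is to relate the random variable $Y$ on $(\mathbb{R}^{\mathbb{N}},\mathcal{B}(\mathbb{R}^{\mathbb{N}}),\mathbb{Q})$ with $X_1$ on $(\Omega,\mathcal{F},\mathbb{P})$ via the measure-preserving correspondence induced by $\mathbb{X}$, and then to transport both the sigma-field $\mathcal{I}_{\mathbb{Q}}$ and the conditional right endpoint of the support along this correspondence. Recall from \eqref{miara} and \eqref{def_Y2} that $Y\circ\mathbb{X}=X_1$ as random variables on $(\Omega,\mathcal{F},\mathbb{P})$, and more generally that $Y_i\circ\mathbb{X}=X_i$ where $\mathbb{Y}=(Y_n,n\ge1)$ is given by \eqref{def_Y}; hence the map $B\mapsto\mathbb{X}^{-1}(B)$ sends $\mathcal{B}(\mathbb{R}^{\mathbb{N}})$ into $\mathcal{F}$ and pulls $\mathbb{Q}$ back to $\mathbb{P}$. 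First I would show that under this correspondence $\mathcal{I}_{\mathbb{Q}}$ is carried precisely to $\mathcal{I}^{\mathbb{X}}$: this is exactly the content of Lemma~\ref{l2}(iii), which says that $A\in\mathcal{I}^{\mathbb{X}}$ iff $A=\mathbb{X}^{-1}(B)$ for some $B\in\mathcal{I}_{\mathbb{Q}}$ with \eqref{warAinvar} holding. Consequently, every $\mathcal{I}_{\mathbb{Q}}$-measurable rv $g$ on $\mathbb{R}^{\mathbb{N}}$ pulls back to an $\mathcal{I}^{\mathbb{X}}$-measurable rv $g\circ\mathbb{X}$ on $\Omega$, and conversely.

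Second, I would verify that $\gamma_1(Y|\mathcal{I}_{\mathbb{Q}})\circ\mathbb{X}$ is a version of $\gamma_1(X_1|\mathcal{I}^{\mathbb{X}})$. This is the technical heart of the argument. Write $\underline{Q}=\gamma_1(Y|\mathcal{I}_{\mathbb{Q}})$. By the previous step $\underline{Q}\circ\mathbb{X}$ is $\mathcal{I}^{\mathbb{X}}$-measurable, so property (i) of Definition~\ref{kwantyl} holds. For (ii), the relation $\mathbb{Q}(Y\le\underline{Q}\,|\,\mathcal{I}_{\mathbb{Q}})=1$ $\mathbb{Q}$-a.s.\ says that $\mathbb{Q}(\{Y>\underline{Q}\}\cap C)=0$ for every $C\in\mathcal{I}_{\mathbb{Q}}$; pulling back along $\mathbb{X}$ and using that $\{Y>\underline{Q}\}$ pulls back to $\{X_1>\underline{Q}\circ\mathbb{X}\}$ while $\mathcal{I}_{\mathbb{Q}}$-sets pull back to $\mathcal{I}^{\mathbb{X}}$-sets (and every $\mathcal{I}^{\mathbb{X}}$-set arises this way), we get $\mathbb{P}(\{X_1>\underline{Q}\circ\mathbb{X}\}\cap A)=0$ for all $A\in\mathcal{I}^{\mathbb{X}}$, i.e.\ $\mathbb{P}(X_1\le\underline{Q}\circ\mathbb{X}\,|\,\mathcal{I}^{\mathbb{X}})=1$ $\mathbb{P}$-a.s. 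For the minimality property (iii): given any $\mathcal{I}^{\mathbb{X}}$-measurable extended rv $Q$ with $\mathbb{P}(X_1\le Q\,|\,\mathcal{I}^{\mathbb{X}})=1$ $\mathbb{P}$-a.s., one needs to realize $Q$ (a.s.) as $h\circ\mathbb{X}$ for some $\mathcal{I}_{\mathbb{Q}}$-measurable $h$; this again follows from Lemma~\ref{l2}(iii) applied to the events $\{Q\le q\}$, $q\in\mathbb{Q}$. Then $\mathbb{P}(X_1\le Q\,|\,\mathcal{I}^{\mathbb{X}})=1$ pushes forward to $\mathbb{Q}(Y\le h\,|\,\mathcal{I}_{\mathbb{Q}})=1$ $\mathbb{Q}$-a.s., so by minimality of $\underline{Q}$ we get $h\ge\underline{Q}$ $\mathbb{Q}$-a.s., hence $Q=h\circ\mathbb{X}\ge\underline{Q}\circ\mathbb{X}$ $\mathbb{P}$-a.s. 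This establishes $\gamma_1(X_1|\mathcal{I}^{\mathbb{X}})=\gamma_1(Y|\mathcal{I}_{\mathbb{Q}})\circ\mathbb{X}$ $\mathbb{P}$-a.s.

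Third, with this identification in hand the equivalences are immediate. Since $Y\circ\mathbb{X}=X_1$ and $\gamma_1(Y|\mathcal{I}_{\mathbb{Q}})\circ\mathbb{X}=\gamma_1(X_1|\mathcal{I}^{\mathbb{X}})$ $\mathbb{P}$-a.s., and $\mathbb{X}$ pushes $\mathbb{P}$ forward to $\mathbb{Q}$, we have
\[
\mathbb{Q}\big(Y=\gamma_1(Y|\mathcal{I}_{\mathbb{Q}})\big)
=\mathbb{P}\big(X_1=\gamma_1(X_1|\mathcal{I}^{\mathbb{X}})\big),
\]
so \eqref{zalozenia3} is equivalent to \eqref{zal_x3}; and likewise
\[
\mathbb{Q}\big(Y=\gamma_1(Y|\mathcal{I}_{\mathbb{Q}})-a\big)
=\mathbb{P}\big(X_1=\gamma_1(X_1|\mathcal{I}^{\mathbb{X}})-a\big),
\]
so \eqref{zalozenia2} is equivalent to \eqref{zal_x2}. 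I expect the main obstacle to be the bookkeeping in the second step, namely the careful checking that the pull-back along $\mathbb{X}$ of the conditional-right-endpoint axioms goes through cleanly, in particular that an arbitrary $\mathcal{I}^{\mathbb{X}}$-measurable competitor $Q$ can be written as $h\circ\mathbb{X}$ with $h$ measurable for $\mathcal{I}_{\mathbb{Q}}$; everything else is routine transport of measure under a measure-preserving map. (Alternatively, and perhaps more cleanly, one may cite Buraczy\'nska and Dembi\'nska (2018), where the behavior of $\gamma_1(\cdot\,|\,\cdot)$ under such correspondences is presumably recorded, and then only the final two displayed equalities remain.)
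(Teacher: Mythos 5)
Your plan is workable in outline, but it takes a genuinely different and heavier route than the paper, and as written it has one real measurability gap. The paper never transports $\gamma_1(\cdot\,|\,\cdot)$ across the correspondence at all: it takes $k_n=n$ in Theorems~\ref{tw.1} and~\ref{tw.2} to rewrite \eqref{zalozenia3} as $\mathbb{Q}(Y=\lim_{n}Y_{n:n})=0$ and \eqref{zal_x3} as $\mathbb{P}(X_1=\lim_{n}X_{n:n})=0$ (with a convention on the null set where the limits fail to exist), and then simply observes that these two probabilities are equal because, by \eqref{miara}, \eqref{def_Y2} and \eqref{def_Y}, the sequence $(Y,Y_1,Y_2,\ldots)$ under $\mathbb{Q}$ has the same law as $(X_1,X_1,X_2,\ldots)$ under $\mathbb{P}$ (indeed $Y_i\circ\mathbb{X}=X_i$). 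Your proposal instead proves the stronger intermediate identification $\gamma_1(X_1|\mathcal{I}^{\mathbb{X}})=\gamma_1(Y|\mathcal{I}_{\mathbb{Q}})\circ\mathbb{X}$ $\mathbb{P}$-a.s.\ by checking the three properties of Definition~\ref{kwantyl}; that is more structural, but it requires exactly the sigma-field bookkeeping the paper's shortcut avoids.

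The gap is in your first step: Lemma~\ref{l2}(iii) is a one-way statement. It guarantees that every $A\in\mathcal{I}^{\mathbb{X}}$ is of the form $\mathbb{X}^{-1}(B)$ with $B\in\mathcal{I}_{\mathbb{Q}}$, but it does \emph{not} give that $\mathbb{X}^{-1}(B)\in\mathcal{I}^{\mathbb{X}}$ for every $B\in\mathcal{I}_{\mathbb{Q}}$, and this can genuinely fail: almost invariance of $B$ yields the identities in \eqref{warAinvar} only up to $\mathbb{P}$-null sets, whereas membership in $\mathcal{I}^{\mathbb{X}}$ demands exact set equality for every shift $i$ (for instance, on the canonical space with $\mathbb{X}$ the identity and a continuous marginal, $B=\{x_1=c,\ x_2\neq c\}$ is almost invariant but not exactly invariant). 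So your claim that $\underline{Q}\circ\mathbb{X}$ is $\mathcal{I}^{\mathbb{X}}$-measurable (property (i)) is unjustified as stated, and $\underline{Q}\circ\mathbb{X}$ need not literally be a version of $\gamma_1(X_1|\mathcal{I}^{\mathbb{X}})$. The fix is available inside the paper: replace $\gamma_1(Y|\mathcal{I}_{\mathbb{Q}})$ by $\gamma_1(Y|\tilde{\mathcal{I}})$, which is $\mathbb{Q}$-a.s.\ equal to it by Remark~\ref{rem3.1}; sets in $\tilde{\mathcal{I}}$ are exactly invariant, so their pullbacks do satisfy \eqref{warAinvar} for every $i$ and lie in $\mathcal{I}^{\mathbb{X}}$. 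Your verifications of (ii) and (iii) then go through (Lemma~\ref{bez.war} reduces the conditional statements to $\mathbb{P}(X_1>\cdot)=0$ and $\mathbb{Q}(Y>\cdot)=0$, and the competitor $Q$ is realized as $h\circ\mathbb{X}$ via Lemma~\ref{l2}(iii) applied to $\{Q\le q\}$ over rational $q$, as you indicate), and since the target conditions are invariant under a.s.\ modification of the endpoint, the two displayed equalities of probabilities finish the proof. Alternatively, you can bypass the entire verification by identifying both conditional endpoints with a.s.\ limits of sample maxima, which is precisely the paper's argument.
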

\begin{proof}
We will restrict ourselves to showing that \eqref{zalozenia3} and \eqref{zal_x3} are equivalent. The proof of the equivalence of  \eqref{zalozenia2} and \eqref{zal_x2} goes along the same lines.

By Theorem \ref{tw.1}, \eqref{zalozenia3} can be rewritten as
\begin{equation}
\label{LEQ1e1}
\mathbb{Q}(Y=  \lim_{n\to\infty}Y_{n:n})=0,
\end{equation}
where $\lim_{n\to\infty} Y_{n:n}$ is defined to equal, for example, $+\infty$ on the set (of probability $\mathbb{Q}$ zero) of $(x_1,x_2,\ldots)\in\mathbb{R}^{\mathbb{N}}$ such that $\lim_{n\to\infty} Y_{n:n}((x_1,x_2,\ldots))$ does not exist.

On the other hand,  by Theorem \ref{tw.2}, \eqref{zal_x3}  is equivalent to the following condition
\begin{equation}
\label{LEQ1e2}
\mathbb{P}(X_1=  \lim_{n\to\infty}X_{n:n})=0,
\end{equation}
where we define $\lim_{n\to\infty} X_{n:n}$  to be  equal to, for example, $+\infty$ on the set (of probability $\mathbb{P}$ zero) of $\omega\in\Omega$ such that $(X_{n:n}(\omega), n\geq 1)$ does not have a limit.

Hence, to show the equivalence of \eqref{zalozenia3} and \eqref{zal_x3}, it suffices to prove that \eqref{LEQ1e1} and \eqref{LEQ1e2} are equivalent. But  \eqref{miara} and \eqref{def_Y2} yield
$$
\mathbb{Q}(Y=  \lim_{n\to\infty}Y_{n:n})=\mathbb{P}(X_1=  \lim_{n\to\infty}X_{n:n}),
$$
which clearly forces the required equivalence.
\end{proof}

What is still lacking is an explicit description of the limiting rv $W$, without using  the auxiliary  sequence~$\mathbb{Y}=(Y_n,n\ge 1)$. We provide it in the following theorem and thus give the desired generalization of Theorem~\ref{tw.3} and the main result of this section.

\begin{tw}\label{tw.5}
Let $\mathbb{X}=(X_n,n\ge 1)$ be a~strictly stationary sequence of rv's and $(k_n,n\ge 1)$ be a~sequence satisfying~\eqref{warK}. Assume that conditions \eqref{zal_x3} and  \eqref{zal_x2} hold. Then
\begin{equation}\label{r7}
K^\mathbb{X}(k_n,n,a)/n\xrightarrow{\mathbb{P}-a.s.}\mathbb{P}(X_1> \gamma_1(X_1|\mathcal{I}^{\mathbb{X}}) -a|\mathcal{I}^{\mathbb{X}})\textrm{ as }n\to\infty.
\end{equation}
\end{tw}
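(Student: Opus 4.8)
The plan is to transport the problem to the canonical space $(\mathbb{R}^{\mathbb{N}},\mathcal{B}(\mathbb{R}^{\mathbb{N}}),\mathbb{Q})$ carrying the law $\mathbb{Q}=\mathbb{P}\circ\mathbb{X}^{-1}$ of $\mathbb{X}$, apply Theorem~\ref{tw.3} there, and then push the conclusion back to $(\Omega,\mathcal{F},\mathbb{P})$ by means of a change-of-variables identity for conditional probabilities. Write $g:=\gamma_1(Y|\mathcal{I}_{\mathbb{Q}})$ and $\phi:=\mathbb{Q}(Y>g-a\mid\mathcal{I}_{\mathbb{Q}})$, with $Y$ and $\mathbb{Y}=(Y_n,n\ge1)$ as in Lemma~\ref{lemY} (so $Y_i((x_1,x_2,\ldots))=x_i$). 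First, by Lemma~\ref{lem.zal.bez.Y} the hypotheses \eqref{zal_x3} and \eqref{zal_x2} are exactly \eqref{zalozenia3} and \eqref{zalozenia2}, so Theorem~\ref{tw.3} (hence Lemma~\ref{lemY}) applies.

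Next I would transfer the almost sure convergence. Since $Y_i((x_1,x_2,\ldots))=x_i$, the same Borel function of the first $n$ coordinates that defines $K^{\mathbb{Y}}(k_n,n,a)$ on $\mathbb{R}^{\mathbb{N}}$ defines $K^{\mathbb{X}}(k_n,n,a)$ on $\Omega$, so $K^{\mathbb{X}}(k_n,n,a)=K^{\mathbb{Y}}(k_n,n,a)\circ\mathbb{X}$ and, likewise, $X_{k_n:n}=Y_{k_n:n}\circ\mathbb{X}$ pointwise on $\Omega$. As $\mathbb{Q}=\mathbb{P}\circ\mathbb{X}^{-1}$, a set of full $\mathbb{Q}$-measure pulls back under $\mathbb{X}$ to a set of full $\mathbb{P}$-measure; feeding \eqref{teza.tw.3} and \eqref{teza1} through this remark gives $K^{\mathbb{X}}(k_n,n,a)/n\xrightarrow{\mathbb{P}-a.s.}\phi\circ\mathbb{X}$ and $X_{k_n:n}\xrightarrow{\mathbb{P}-a.s.}g\circ\mathbb{X}$ (the first convergence is also the content of Lemma~\ref{lemY}). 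Comparing the second convergence with Theorem~\ref{tw.2} and using uniqueness of almost sure limits, $\gamma_1(X_1|\mathcal{I}^{\mathbb{X}})=g\circ\mathbb{X}$ $\mathbb{P}$-a.s.; in particular $\{X_1>\gamma_1(X_1|\mathcal{I}^{\mathbb{X}})-a\}$ coincides, off a $\mathbb{P}$-null set, with $\mathbb{X}^{-1}(B)$ for $B:=\{(x_1,x_2,\ldots):x_1>g((x_1,x_2,\ldots))-a\}$, while $\phi=\mathbb{Q}(B\mid\mathcal{I}_{\mathbb{Q}})$.

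It then remains to identify $\phi\circ\mathbb{X}$ with the right-hand side of \eqref{r7}, for which it suffices to establish the transfer identity $\mathbb{P}(\mathbb{X}^{-1}(B)\mid\mathcal{I}^{\mathbb{X}})=\mathbb{Q}(B\mid\mathcal{I}_{\mathbb{Q}})\circ\mathbb{X}$ $\mathbb{P}$-a.s., valid for every $B\in\mathcal{B}(\mathbb{R}^{\mathbb{N}})$. I would verify the two defining properties of conditional expectation: (i) $\mathcal{I}^{\mathbb{X}}$-measurability of $\mathbb{Q}(B\mid\mathcal{I}_{\mathbb{Q}})\circ\mathbb{X}$, obtained by choosing a version of $\mathbb{Q}(B\mid\mathcal{I}_{\mathbb{Q}})$ that is honestly shift-invariant, hence $\tilde{\mathcal{I}}$-measurable (possible after modifying on a $\mathbb{Q}$-null set, by Lemma~\ref{l2}(ii)), and noting that such a function composed with $\mathbb{X}$ is $\mathcal{I}^{\mathbb{X}}$-measurable; and (ii) for every $G\in\mathcal{I}^{\mathbb{X}}$, writing $G=\mathbb{X}^{-1}(H)$ with $H\in\mathcal{I}_{\mathbb{Q}}$ (Lemma~\ref{l2}(iii)) and using the ordinary change-of-variables formula, $\int_G\big(\mathbb{Q}(B\mid\mathcal{I}_{\mathbb{Q}})\circ\mathbb{X}\big)\,d\mathbb{P}=\int_H\mathbb{Q}(B\mid\mathcal{I}_{\mathbb{Q}})\,d\mathbb{Q}=\mathbb{Q}(B\cap H)=\mathbb{P}(\mathbb{X}^{-1}(B)\cap G)$, the middle equality being the defining property of $\mathbb{Q}(B\mid\mathcal{I}_{\mathbb{Q}})$. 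Applying this with the $B$ above and recalling $\gamma_1(X_1|\mathcal{I}^{\mathbb{X}})=g\circ\mathbb{X}$, the left-hand side of the transfer identity becomes $\mathbb{P}(X_1>\gamma_1(X_1|\mathcal{I}^{\mathbb{X}})-a\mid\mathcal{I}^{\mathbb{X}})$ and the right-hand side becomes $\phi\circ\mathbb{X}$, which together with the previous paragraph yields \eqref{r7}.

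The main obstacle is the last step. The delicate point is that $\mathbb{X}^{-1}$ maps $\mathcal{I}_{\mathbb{Q}}$ into $\mathcal{I}^{\mathbb{X}}$ only up to $\mathbb{P}$-null sets, so both the measurability check in (i) and the representation $G=\mathbb{X}^{-1}(H)$ in (ii) must be routed through genuinely $T$-invariant ($\tilde{\mathcal{I}}$-measurable) representatives; this is precisely where Lemma~\ref{l2} and the almost sure uniqueness of the conditional right endpoint are used. Once the transfer identity is in place, the remaining manipulations are routine.
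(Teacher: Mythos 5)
Your argument is correct and follows the same overall strategy as the paper: transport everything to the canonical space $(\mathbb{R}^{\mathbb{N}},\mathcal{B}(\mathbb{R}^{\mathbb{N}}),\mathbb{Q})$, invoke Theorem~\ref{tw.3} there, and then verify the two defining properties of conditional expectation on $(\Omega,\mathcal{F},\mathbb{P})$, with Lemma~\ref{l2}(ii)--(iii), Theorems~\ref{tw.1}, \ref{tw.2} and \ref{tw.3_2} doing the same work in both proofs. The packaging of the last step differs, though: you first prove the general transfer identity $\mathbb{P}(\mathbb{X}^{-1}(B)\mid\mathcal{I}^{\mathbb{X}})=\mathbb{Q}(B\mid\mathcal{I}_{\mathbb{Q}})\circ\mathbb{X}$ for every $B\in\mathcal{B}(\mathbb{R}^{\mathbb{N}})$ and then specialize, and you use the pointwise relations $K^{\mathbb{X}}(k_n,n,a)=K^{\mathbb{Y}}(k_n,n,a)\circ\mathbb{X}$, $X_{k_n:n}=Y_{k_n:n}\circ\mathbb{X}$ to pull back null sets directly, whereas the paper works with the specific limit $\lim_n K^{\mathbb{X}}(k_n,n,a)/n$, shows its $\mathcal{I}^{\mathbb{X}}$-measurability via the explicitly invariant set $B$ of \eqref{d33w6}, and obtains the integral identity \eqref{d33w4} through the equality in distribution of $(\mathbb{X},I(G))$ and $(\mathbb{Y},I(\tilde{G}))$ together with the representation of the conditional right endpoints as a.s.\ limits of order statistics. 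Your route is slightly more modular (the transfer identity is of independent use, e.g.\ it immediately reproves Lemma~\ref{lem.zal.bez.Y}); its one thin spot is the claim that a version of $\mathbb{Q}(B\mid\mathcal{I}_{\mathbb{Q}})$ can be modified on a $\mathbb{Q}$-null set to be exactly shift-invariant --- Lemma~\ref{l2}(ii) as stated gives only a.e.\ invariance, so you should either supply the standard construction (e.g.\ replace $\psi$ by $\limsup_m \psi\circ T^m$, which is everywhere invariant and $\mathbb{Q}$-a.s.\ equal to $\psi$ by stationarity) or, more in the spirit of the paper, condition on $\tilde{\mathcal{I}}$ from the outset via Theorem~\ref{tw.3_2} and Remark~\ref{rem3.1}, since every $\tilde{\mathcal{I}}$-measurable function is exactly invariant by Lemma~\ref{l2}(ii). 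With that detail filled in, the proof is complete.
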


\begin{proof}
By Lemma~\ref{lem.zal.bez.Y}, assumptions \eqref{zalozenia3} and \eqref{zalozenia2} of Theorem~\ref{tw.3} are satisfied with the measure $\mathbb{Q}$ described by~\eqref{miara}. Using this theorem we obtain that~\eqref{teza.tw.3} holds with the sequence $\mathbb{Y}=(Y_n, n\ge 1)$ defined by \eqref{def_Y}, where the rv $Y$ is given by~\eqref{def_Y2}. From \eqref{teza.tw.3} we see that $\lim_{n\to\infty} K^{\mathbb{Y}}(k_n,n,a)/n$ exists $\mathbb{Q}$-a.s. On the set of $\mathbb{Q}$-probability zero of $(x_1,x_2,\ldots)\in\mathbb{R}^{\mathbb{N}}$ for which $\lim_{n\to\infty} K^{\mathbb{Y}}(k_n,n,a)((x_1,x_2,\ldots))/n$ does not exist we can define, for example, $\lim_{n\to\infty} K^{\mathbb{Y}}(k_n,n,a)((x_1,x_2,\ldots))/n=0$. Moreover \eqref{teza.tw.3} ensures that
\begin{equation}\label{d33w1}
    \lim_{n\to\infty} K^{\mathbb{Y}}(k_n,n,a)/n\textrm{ is }\mathcal{I}_{\mathbb{Q}}-\textrm{measurable}
\end{equation}
and
\begin{equation}\label{d33w2}
    \mathbb{E}_{\mathbb{Q}}(\lim_{n\to\infty} K^{\mathbb{Y}}(k_n,n,a)/n\cdot I(\tilde{G}))
    = \mathbb{E}_{\mathbb{Q}} (I(Y>\gamma_1(Y|\mathcal{I}_{\mathbb{Q}})-a)\cdot I(\tilde{G}))\textrm{ for all }\tilde{G}\in\mathcal{I}_{\mathbb{Q}}.
\end{equation}

On the other hand, Lemma~\ref{lemY} says that $\lim_{n\to\infty} K^{\mathbb{X}}(k_n,n,a)/n$ exists $\mathbb{P}$-a.s. For definiteness, we can assume that, for instance, $\lim_{n\to\infty} K^{\mathbb{X}}(k_n,n,a)/n (\omega)=0$ for $\omega\in\Omega$ such that this limit does not exist. To prove~\eqref{r7}, we must show that
\begin{equation}\label{d33w3}
    \lim_{n\to\infty} K^{\mathbb{X}}(k_n,n,a)/n\textrm{ is }\mathcal{I}^{\mathbb{X}}-\textrm{measurable} 
\end{equation}
and
\begin{equation}\label{d33w4}
    \mathbb{E}_{\mathbb{P}}(\lim_{n\to\infty} K^{\mathbb{X}}(k_n,n,a)/n\cdot I(G))
    = \mathbb{E}_{\mathbb{P}} (I(X_1>\gamma_1(X_1|\mathcal{I}^{\mathbb{X}})-a)\cdot I(G))\textrm{ for all }G\in\mathcal{I}^{\mathbb{X}}.
\end{equation}
Condition \eqref{d33w3} means that, for every $A\in\mathcal{B}(\mathbb{R})$,
\[
\{\omega\in\Omega\colon\lim_{n\to\infty} \frac{K^{\mathbb{X}}(k_n,n,a)}{n}(\omega)\in A\}\in\mathcal{I}^{\mathbb{X}},
\]
where $\mathcal{B}(\mathbb{R})$ stands for the Borel sigma-field of subsets of~$\mathbb{R}$. By the definition of $\mathcal{I}^{\mathbb{X}}$, we can rewrite the above requirement as
\begin{align}\label{d33w5}
\textrm{for every }A\in \mathcal{B}(\mathbb{R})\textrm{ there exists }B\in\mathcal{B}(\mathbb{R}^{\mathbb{N}})\textrm{ such that, for any }i\ge 1,\nonumber\\
\{\omega\in\Omega\colon \lim_{n\to\infty} \frac{K^\mathbb{X}(k_n,n,a)}{n}(\omega)\in A\}=
\{\omega\in\Omega\colon (X_i(\omega),X_{i+1}(\omega),\ldots)\in B\}.
\end{align}
To show \eqref{d33w5}, fix $A\in\mathcal{B}(\mathbb{R})$ and observe that we can take
\begin{align}\label{d33w6}
B=\{(x_1,x_2,\ldots)\in\mathbb{R}^{\mathbb{N}}\colon 
\lim_{n\to\infty} \frac{K^\mathbb{X}(k_n,n,a)}{n}((x_1,x_2,\ldots)) \in A\}\nonumber\\
=\{(x_1,x_2,\ldots)\in\mathbb{R}^{\mathbb{N}}:\ \lim_{n\to\infty} \sum_{i=1}^n \frac{x_i\in(x_{k_n:n}-a,x_{k_n:n})}{n}\in A\}. 
\end{align}
Indeed, by Theorem~\ref{tw.3_2} and Remark~\ref{rem3.1}, $B\in\tilde{\mathcal{I}}$ so in consequence $B\in\mathcal{B}(\mathbb{R}^{\mathbb{N}})$ and
\[
B=\{(x_1,x_2,\ldots)\in\mathbb{R}^{\mathbb{N}}\colon 
(x_i,x_{i+1},\ldots) \in B\}\textrm{ for all }i\ge 1,
\]
which gives, for all $i\ge 1$,
\begin{align*}
\{\omega\in\Omega&\colon (X_i(\omega),X_{i+1}(\omega),\ldots)\in B\}=\{\omega\in\Omega\colon (X_1(\omega),X_{2}(\omega),\ldots)\in B\}\\
&= \{\omega\in\Omega\colon \lim_{n\to\infty} \sum_{i=1}^n \frac{X_i(\omega)\in (X_{k_n:n}(\omega)-a,X_{k_n:n}(\omega))}{n}\in A\}\\
&= \{\omega\in\Omega\colon \lim_{n\to\infty} \frac{K^\mathbb{X}(k_n,n,a)}{n}(\omega)\in A\},
\end{align*}
where the second equality follows from~\eqref{d33w6}. Thus~\eqref{d33w5} holds as required and the proof of~\eqref{d33w3} is complete.

We next prove~\eqref{d33w4}. For this purpose, fix $G\in\mathcal{I}^{\mathbb{X}}$. By Lemma~\ref{l2}~(iii), there exists a~set $\tilde{G}\in\mathcal{I}_{\mathbb{Q}}$ such that
\begin{equation}\label{d33w7}
    G=\{\omega\in\Omega\colon (X_i(\omega),X_{i+1}(\omega),\ldots)\in \tilde{G}\}\textrm{ for all }i\ge 1.
\end{equation}
Moreover, for $G$ and $\tilde{G}$ satisfying~\eqref{d33w7}, 
\begin{equation}\label{d33w8}
(\mathbb{X},I(G))\textrm{ and }(\mathbb{Y},I(\tilde{G}))\textrm{ have the same distribution.}
\end{equation}
To show this, it suffices to observe that, for all $C\in\mathcal{B}(\mathbb{R}^{\mathbb{N}})$, we have
\[
\mathbb{P}(\mathbb{X}\in C, I(G)=1)=\mathbb{Q}(\mathbb{Y}\in C, I(\tilde{G})=1).
\]
But
\begin{align*}
\mathbb{P}(\mathbb{X}\in C, I(G)=1)&= \mathbb{P}(\{\mathbb{X}\in C\}\cap G)
= \mathbb{P}(\{\mathbb{X}\in C\}\cap \{\mathbb{X}\in\tilde{G}\})\\ 
&= \mathbb{Q}(C\cap\tilde{G})
= \mathbb{Q}(\mathbb{Y}\in C, I(\tilde{G})=1)
\end{align*}
as required, the second and third equality being consequences of~\eqref{d33w7} and~\eqref{miara}, respectively.

Theorem~\ref{tw.2} gives
\begin{equation}\label{d33w9}
    \mathbb{E}_{\mathbb{P}} (I(X_1>\gamma_1(X_1|\mathcal{I}^{\mathbb{X}})-a)\cdot I(G))
    = \mathbb{E}_{\mathbb{P}} (I(X_1>\lim_{n\to\infty} X_{k_n:n} -a)\cdot I(G)),
\end{equation}
where, analogously as in the proof of Lemma~\ref{lem.zal.bez.Y}, we define $\lim_{n\to\infty} X_{k_n:n}$ to be equal to, for example, $+\infty$ on the set (of probability $\mathbb{P}$ zero) of $\omega\in\Omega$ such that $(X_{k_n:n}(\omega),n\ge 1)$ does not have a~limit. Applying~\eqref{d33w8} and next Theorem~\ref{tw.1}, \eqref{d33w2} and~\eqref{d33w8} we obtain
\begin{align}\label{d33w10}
    \mathbb{E}_{\mathbb{P}} (I(X_1&>\lim_{n\to\infty} X_{k_n:n} -a)\cdot I(G))
    = \mathbb{E}_{\mathbb{Q}} (I(Y>\lim_{n\to\infty} Y_{k_n:n} -a)\cdot I(\tilde{G}))\nonumber\\
    &= \mathbb{E}_{\mathbb{Q}} (I(Y>\gamma_1(Y|\mathcal{I}_{\mathbb{Q}}) -a)\cdot I(\tilde{G}))
    = \mathbb{E}_{\mathbb{Q}} \Big(\lim_{n\to\infty} \frac{K^\mathbb{Y}(k_n,n,a)}{n} \cdot I(\tilde{G})\Big)\nonumber\\
    &= \mathbb{E}_{\mathbb{P}} \Big(\lim_{n\to\infty} \frac{K^\mathbb{X}(k_n,n,a)}{n} \cdot I(G)\Big),
\end{align}
where again, on the set (of probability $\mathbb{Q}$ zero) of $(x_1,x_2,\ldots)\in\mathbb{R}^{\mathbb{N}}$ such that 
$\lim_{n\to\infty} Y_{k_n:n}((x_1,x_2,\ldots))$ does not exist, this limit is defined to equal, for example, $+\infty$. Since $G\in\mathcal{I}^{\mathbb{X}}$ is arbitrary, \eqref{d33w9} and \eqref{d33w10} show that \eqref{d33w4} holds. This completes the proof.
\end{proof}

Theorem~\ref{tw.5} can be reformulated as follows.

\begin{tw}\label{tw.3.4}
Under the assumptions of Theorem~\ref{tw.5},
\[
K^\mathbb{X}(k_n,n,a)/n\xrightarrow{\mathbb{P}-a.s.} I(\gamma_1(X_1|\mathcal{I}^{\mathbb{X}})<\infty)\cdot \mathbb{P}(X_1>\gamma_1(X_1|\mathcal{I}^{\mathbb{X}}) -a|\mathcal{I}^{\mathbb{X}})\textrm{ as }n\to\infty.
\]
\end{tw}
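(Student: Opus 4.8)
The plan is to derive Theorem~\ref{tw.3.4} from Theorem~\ref{tw.5} by showing that the two candidate limits coincide $\mathbb{P}$-a.s., that is,
\begin{equation*}
\mathbb{P}\bigl(X_1>\gamma_1(X_1|\mathcal{I}^{\mathbb{X}})-a\,\big|\,\mathcal{I}^{\mathbb{X}}\bigr)
= I\bigl(\gamma_1(X_1|\mathcal{I}^{\mathbb{X}})<\infty\bigr)\cdot\mathbb{P}\bigl(X_1>\gamma_1(X_1|\mathcal{I}^{\mathbb{X}})-a\,\big|\,\mathcal{I}^{\mathbb{X}}\bigr)\quad\mathbb{P}\textrm{-a.s.}
\end{equation*}
Since the right-hand side differs from the left only through the factor $I(\gamma_1(X_1|\mathcal{I}^{\mathbb{X}})<\infty)$, the equality is automatic on the event $\{\gamma_1(X_1|\mathcal{I}^{\mathbb{X}})<\infty\}$, and the whole content is to show that the conditional probability on the left vanishes $\mathbb{P}$-a.s. on the complementary event $D:=\{\gamma_1(X_1|\mathcal{I}^{\mathbb{X}})=\infty\}$.

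First I would note that $D\in\mathcal{I}^{\mathbb{X}}$, since $\gamma_1(X_1|\mathcal{I}^{\mathbb{X}})$ is $\mathcal{I}^{\mathbb{X}}$-measurable by definition. Then I would test the defining property of conditional expectation against $D$: it suffices to prove
\begin{equation*}
\mathbb{E}_{\mathbb{P}}\bigl(I(X_1>\gamma_1(X_1|\mathcal{I}^{\mathbb{X}})-a)\cdot I(D)\bigr)=0,
\end{equation*}
because the integrand is nonnegative and $\mathcal{I}^{\mathbb{X}}$-measurability of the conditional probability together with a vanishing integral over $D\in\mathcal{I}^{\mathbb{X}}$ forces $\mathbb{P}(X_1>\gamma_1(X_1|\mathcal{I}^{\mathbb{X}})-a\mid\mathcal{I}^{\mathbb{X}})=0$ $\mathbb{P}$-a.s.\ on $D$. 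Now on $D$ we have $\gamma_1(X_1|\mathcal{I}^{\mathbb{X}})-a=\infty$ (using the arithmetic conventions fixed in the Introduction, $\infty - a = \infty$), so the event $\{X_1>\gamma_1(X_1|\mathcal{I}^{\mathbb{X}})-a\}\cap D=\{X_1>\infty\}\cap D=\varnothing$, since $X_1$ is a genuine (finite) rv. Hence the expectation above is the integral of the zero function and equals $0$.

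Putting these pieces together gives the claimed a.s.\ equality of the two limits, and combining it with the convergence $K^\mathbb{X}(k_n,n,a)/n\xrightarrow{\mathbb{P}-a.s.}\mathbb{P}(X_1>\gamma_1(X_1|\mathcal{I}^{\mathbb{X}})-a\mid\mathcal{I}^{\mathbb{X}})$ from Theorem~\ref{tw.5} yields the statement. There is essentially no obstacle here: the only point requiring a little care is the bookkeeping with extended-real arithmetic on the event $D$ and the observation that $D$ is itself $\mathcal{I}^{\mathbb{X}}$-measurable, so that the conditional-expectation characterization can be applied on it; everything else is immediate. (One could equivalently argue pointwise: on $D$, for each $\omega$, $I(X_1(\omega)>\infty)=0$, so the conditional probability, being a version of $\mathbb{E}_{\mathbb{P}}[\,0\mid\mathcal{I}^{\mathbb{X}}]$ restricted to $D$, is $0$ a.s.)
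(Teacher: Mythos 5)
Your proposal is correct and follows essentially the same route as the paper: both reduce the theorem to showing that $\mathbb{P}(X_1>\gamma_1(X_1|\mathcal{I}^{\mathbb{X}})-a\,|\,\mathcal{I}^{\mathbb{X}})$ vanishes a.s.\ on the $\mathcal{I}^{\mathbb{X}}$-measurable event $\{\gamma_1(X_1|\mathcal{I}^{\mathbb{X}})=\infty\}$, using that $X_1$ is real-valued so $\{X_1>\infty\}$ is empty, and then invoke Theorem~\ref{tw.5}. The paper phrases this by splitting the indicator inside the conditional expectation and pulling out the $\mathcal{I}^{\mathbb{X}}$-measurable factor $I(\gamma_1(X_1|\mathcal{I}^{\mathbb{X}})<\infty)$, while you test the defining integral property over the event, which is only a cosmetic difference.
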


\begin{proof}
Observe that $\mathbb{P}-\textrm{a.s.}$
\begin{align*}
&\mathbb{P}(X_1>\gamma_1(X_1|\mathcal{I}^{\mathbb{X}})-a|\mathcal{I}^{\mathbb{X}})=\mathbb{E}_{\mathbb{P}}\Big(I(X_1>\gamma_1(X_1|\mathcal{I}^{\mathbb{X}})-a)|\mathcal{I}^{\mathbb{X}}\Big)\\
&= \mathbb{E}_{\mathbb{P}}\Big(I(\gamma_1(X_1|\mathcal{I}^{\mathbb{X}})=\infty)\cdot
I(X_1>\gamma_1(X_1|\mathcal{I}^{\mathbb{X}})-a)|\mathcal{I}^{\mathbb{X}}\Big)\\
&+ \mathbb{E}_{\mathbb{P}}\Big(I(\gamma_1(X_1|\mathcal{I}^{\mathbb{X}})<\infty)\cdot
I(X_1>\gamma_1(X_1|\mathcal{I}^{\mathbb{X}})-a)|\mathcal{I}^{\mathbb{X}}\Big)\\
&=0+I(\gamma_1(X_1|\mathcal{I}^{\mathbb{X}})<\infty)\cdot
\mathbb{E}_{\mathbb{P}}(I(X_1>\gamma_1(X_1|\mathcal{I}^{\mathbb{X}})-a)|\mathcal{I}^{\mathbb{X}})\\
&=I(\gamma_1(X_1|\mathcal{I}^{\mathbb{X}})<\infty)\cdot
\mathbb{P}(X_1>\gamma_1(X_1|\mathcal{I}^{\mathbb{X}})-a|\mathcal{I}^{\mathbb{X}}),
\end{align*}
where the next to last equality is due to the facts that
\[
I(\gamma_1(X_1|\mathcal{I}^{\mathbb{X}})=\infty)\cdot
I(X_1>\gamma_1(X_1|\mathcal{I}^{\mathbb{X}})-a)=0\quad\mathbb{P}-\textrm{a.s.},
\]
and $I(\gamma_1(X_1|\mathcal{I}^{\mathbb{X}})<\infty)$ is $\mathcal{I}^{\mathbb{X}}$-measurable.

Now it is obvious that Theorem~\ref{tw.3.4} is a~consequence of Theorem~\ref{tw.5}.
\end{proof}

In particular, Theorem~\ref{tw.3.4} shows that, under assumptions of this theorem, for $\mathbb{P}$-almost every
$\omega\in\Omega$ such that $\gamma_1(X_1|\mathcal{I}^{\mathbb{X}})(\omega)=\infty$ (where $\gamma_1(X_1|\mathcal{I}^{\mathbb{X}})$ is any version of conditional right endpoint of the support of $X_1$ given $\mathcal{I}^{\mathbb{X}}$), we have
\[
K^\mathbb{X}(k_n,n,a)(\omega)/n\longrightarrow 0\textrm{ as }n\to\infty.
\]
Moreover, from Theorem~\ref{tw.3.4} we can easily deduce the following corollary.


\begin{co}\label{con3.1}
If $(k_n,n\ge 1)$ satisfies~\eqref{warK} and $\mathbb{X}=(X_n,n\ge 1)$ is a~strictly stationary sequence of rv's such that $\gamma_1(X_1|\mathcal{I}^{\mathbb{X}})=\infty$ $\mathbb{P}$-a.s., then
\begin{equation}
K^\mathbb{X}(k_n,n,a)/n\xrightarrow{\mathbb{P}-a.s.}0\textrm{ as }n\to\infty.
\label{zbiega_0}
\end{equation}
\end{co}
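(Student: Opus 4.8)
The plan is to obtain this as a direct specialization of Theorem~\ref{tw.3.4}. First I would check that the hypotheses of Theorem~\ref{tw.5}---namely conditions \eqref{zal_x3} and \eqref{zal_x2}---are automatically satisfied under the present assumption that $\gamma_1(X_1|\mathcal{I}^{\mathbb{X}})=\infty$ $\mathbb{P}$-a.s. Indeed, since $X_1$ is a genuine (real-valued) rv, we have $\mathbb{P}(X_1=\infty)=0$, and hence
\[
\mathbb{P}(X_1=\gamma_1(X_1|\mathcal{I}^{\mathbb{X}}))=\mathbb{P}(X_1=\infty)=0,
\]
so \eqref{zal_x3} holds; similarly, since $\gamma_1(X_1|\mathcal{I}^{\mathbb{X}})-a=\infty$ $\mathbb{P}$-a.s. under the conventions adopted in the paper for arithmetic in $[-\infty,\infty]$, we get $\mathbb{P}(X_1=\gamma_1(X_1|\mathcal{I}^{\mathbb{X}})-a)=\mathbb{P}(X_1=\infty)=0$, which is \eqref{zal_x2}. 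Thus Theorem~\ref{tw.3.4} applies.

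Next I would simply read off the limit from Theorem~\ref{tw.3.4}. That theorem gives
\[
K^\mathbb{X}(k_n,n,a)/n\xrightarrow{\mathbb{P}-a.s.} I(\gamma_1(X_1|\mathcal{I}^{\mathbb{X}})<\infty)\cdot \mathbb{P}(X_1>\gamma_1(X_1|\mathcal{I}^{\mathbb{X}}) -a|\mathcal{I}^{\mathbb{X}}).
\]
Since $\gamma_1(X_1|\mathcal{I}^{\mathbb{X}})=\infty$ $\mathbb{P}$-a.s., the indicator $I(\gamma_1(X_1|\mathcal{I}^{\mathbb{X}})<\infty)$ equals $0$ $\mathbb{P}$-a.s., so the right-hand side is $0$ $\mathbb{P}$-a.s., which is precisely \eqref{zbiega_0}. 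This finishes the argument.

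There is no real obstacle here; the only points requiring a word of care are the verification of the non-atomicity conditions \eqref{zal_x3}--\eqref{zal_x2}, where one must invoke that $X_1$ takes finite values almost surely together with the convention $x+\infty=\infty$ and $\infty=\infty$ from the end of Section~\ref{sec1}, and the observation that the indicator factor in Theorem~\ref{tw.3.4} vanishes. Everything else is immediate from the already-established Theorem~\ref{tw.3.4}.
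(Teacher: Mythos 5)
Your argument is correct and matches the paper's own route: the paper deduces Corollary~\ref{con3.1} directly from Theorem~\ref{tw.3.4}, with conditions \eqref{zal_x3} and \eqref{zal_x2} holding trivially because $X_1$ is real-valued while $\gamma_1(X_1|\mathcal{I}^{\mathbb{X}})=\gamma_1(X_1|\mathcal{I}^{\mathbb{X}})-a=\infty$ $\mathbb{P}$-a.s., and the indicator factor $I(\gamma_1(X_1|\mathcal{I}^{\mathbb{X}})<\infty)$ then annihilates the limit. Your verification of these two non-atomicity conditions is exactly the step the paper leaves implicit (and spells out later in the proof of Corollary~\ref{c4.1}), so nothing is missing.
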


\section{Extensions to random sample sizes}
\label{Sec4}
The aim of this section is to provide results yielding information about the long-term behavior of the model of insurance claims described in the Introduction. First we focus on the asymptotic behavior of normalized numbers of near maximum insurance claims that occur up to time~$t$, ${\cal K}_t(a)/c(t)$, where $c(\cdot)$ is some positive function. This asymptotic behavior will not change if we replace ${\cal K}_t(a)$ by ${\cal K}_t(k_{N(t)},a)$, where $(k_n,n\ge 1)$ is a~sequence satisfying~\eqref{warK}. Therefore we give limiting theorems not only for ${\cal K}_t(a)/c(t)$, but more generally for ${\cal K}_t(k_{N(t)},a)/c(t)$ - normalized numbers of insurance claims that are  in a left neighborhood of  the $(N(t)-k_{N(t)}+1)$st largest claim, where $(k_n,n\ge 1)$ satisfies~\eqref{warK}.

We will derive the limiting theorems of interest from the corresponding results for $K(k_n,n,a)/n$. For this purpose we will use two lemmas. The first lemma is taken from Embrechts et al. (1997, Lemma 2.5.3).
\begin{lem}\label{LEKM}
Let $W,W_1,W_2,\ldots$ be rv's satisfying $W_n\xrightarrow{a.s.}W$ as $n\to\infty$ and $(N(t),t\ge 0)$ be a~process of non-negative integer-valued rv's.
\begin{description}
 \item[(i)]
 If $N(t)\xrightarrow{a.s.}\infty$ as $t\to\infty$, then
 $W_{N(t)}\xrightarrow{a.s.}W$ as $n\to\infty$.
 \item[(ii)]
 If $N(t)\xrightarrow{p}\infty$ as $t\to\infty$, then
 $W_{N(t)}\xrightarrow{p}W$ as $n\to\infty$.
\end{description}
\end{lem}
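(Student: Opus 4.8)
The plan is to prove both statements by the classical ``random-index'' transfer argument, which reduces the behaviour of $W_{N(t)}$ to that of the deterministically indexed sequence $(W_n)$. For part (i) I would argue pathwise. Set $\Omega_1=\{\omega\colon W_n(\omega)\to W(\omega)\}$ and $\Omega_2=\{\omega\colon N(t)(\omega)\to\infty\text{ as }t\to\infty\}$; by hypothesis each has probability one, hence so does $\Omega_0=\Omega_1\cap\Omega_2$. Fix $\omega\in\Omega_0$ and $\varepsilon>0$. Pick $n_0$ with $|W_n(\omega)-W(\omega)|<\varepsilon$ for all $n\ge n_0$, and then, using $\omega\in\Omega_2$, pick $t_0$ with $N(t)(\omega)\ge n_0$ for all $t\ge t_0$. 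For such $t$ the random index $N(t)(\omega)$ lies in the range where the estimate is valid, so $|W_{N(t)}(\omega)-W(\omega)|<\varepsilon$. Letting $\varepsilon\downarrow 0$ gives $W_{N(t)}(\omega)\to W(\omega)$, and since $\mathbb{P}(\Omega_0)=1$ this yields $W_{N(t)}\xrightarrow{a.s.}W$ as $t\to\infty$.

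For part (ii) I would route through the tail supremum $M_m=\sup_{n\ge m}|W_n-W|$. Almost sure convergence $W_n\to W$ is equivalent to $M_m\downarrow 0$ a.s.\ as $m\to\infty$, and since the events $\{M_m>\varepsilon\}$ decrease to a null set, continuity of $\mathbb{P}$ from above gives $\mathbb{P}(M_m>\varepsilon)\to 0$ for every $\varepsilon>0$. Now fix $\varepsilon>0$ and $\delta>0$. Choose $m$ so large that $\mathbb{P}(M_m>\varepsilon)<\delta/2$, and then, using $N(t)\xrightarrow{p}\infty$, choose $t_0$ so that $\mathbb{P}(N(t)<m)<\delta/2$ for all $t\ge t_0$. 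On the event $\{N(t)\ge m\}$ one has $|W_{N(t)}-W|\le M_m$, so for every $t\ge t_0$,
\[
\mathbb{P}(|W_{N(t)}-W|>\varepsilon)\le\mathbb{P}(N(t)<m)+\mathbb{P}(M_m>\varepsilon)<\delta.
\]
Since $\delta>0$ was arbitrary, $\mathbb{P}(|W_{N(t)}-W|>\varepsilon)\to 0$ as $t\to\infty$, that is, $W_{N(t)}\xrightarrow{p}W$.

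I do not expect a genuine obstacle; the only points requiring a word of care are the measurability of $W_{N(t)}$, which follows from $\{W_{N(t)}\in A\}=\bigcup_{n\ge 0}\big(\{N(t)=n\}\cap\{W_n\in A\}\big)$ once a harmless convention is fixed for $W_0$ (its value is irrelevant, since in both parts one only ever uses $N(t)$ large), and, in part (ii), the passage from a.s.\ convergence to $\mathbb{P}(M_m>\varepsilon)\to 0$, which is just the standard quantitative reformulation of almost sure convergence.
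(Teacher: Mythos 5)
Your proof is correct on both counts: the pathwise argument for (i) and the tail-supremum $M_m=\sup_{n\ge m}|W_n-W|$ splitting over $\{N(t)<m\}$ and $\{N(t)\ge m\}$ for (ii) are exactly the standard arguments, and your remarks on measurability of $W_{N(t)}$ and on the passage from a.s.\ convergence to $\mathbb{P}(M_m>\varepsilon)\to 0$ close the only delicate points. Note that the paper itself gives no proof of this lemma --- it is imported verbatim from Embrechts et al.\ (1997, Lemma 2.5.3) --- so there is nothing internal to compare against; your write-up supplies the classical proof that the cited source uses.
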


Lemma~\ref{LEKM} and Theorem~\ref{tw.3.4} immediately imply the following result.
\begin{tw}~\label{th4.1}
Under the assumptions of Theorem~\ref{tw.5}, if moreover $(N(t),t\ge 0)$ is a~process of non-negative integer-valued rv's $N(t)$ such that $N(t)\xrightarrow{a.s.}\infty$ ($N(t)\xrightarrow{p}\infty$) we have, as $t\to\infty$,
\begin{align}\label{convas}
   {\cal K}_t(k_{N(t)},a)/N(t)\xrightarrow{\mathbb{P}-a.s.} I(\gamma_1(X_1|\mathcal{I}^{\mathbb{X}})<\infty)\cdot \mathbb{P}(X_1>\gamma_1(X_1|\mathcal{I}^{\mathbb{X}}) -a|\mathcal{I}^{\mathbb{X}})\\
    \label{convP}
   \big({\cal K}_t(k_{N(t)},a)/N(t)\xrightarrow{p} I(\gamma_1(X_1|\mathcal{I}^{\mathbb{X}})<\infty)\cdot \mathbb{P}(X_1>\gamma_1(X_1|\mathcal{I}^{\mathbb{X}}) -a|\mathcal{I}^{\mathbb{X}})\big).
\end{align}
\end{tw}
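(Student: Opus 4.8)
The plan is to deduce Theorem~\ref{th4.1} directly from Theorem~\ref{tw.3.4} together with Lemma~\ref{LEKM}, treating the two displays \eqref{convas} and \eqref{convP} in parallel. The key observation is purely notational: for a deterministic sample size $n$, the counting rv ${\cal K}_t(k_{N(t)},a)$ evaluated on the event $\{N(t)=n\}$ coincides with $K^{\mathbb{X}}(k_n,n,a)$. More precisely, if I set $W_n := K^{\mathbb{X}}(k_n,n,a)/n$ for $n\ge 1$, then by the very definitions \eqref{defcalK} and the definition of $K^{\mathbb{X}}(k_n,n,a)$ one has, pathwise, ${\cal K}_t(k_{N(t)},a)/N(t) = W_{N(t)}$ on the event $\{N(t)\ge 1\}$. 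So the randomly-indexed quantity on the left of \eqref{convas}--\eqref{convP} is literally the random substitution $n\mapsto N(t)$ into the deterministic-sample-size sequence $(W_n)$.

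First I would invoke Theorem~\ref{tw.3.4}: under the assumptions of Theorem~\ref{tw.5} (which are exactly the hypotheses carried over here), $W_n = K^{\mathbb{X}}(k_n,n,a)/n \xrightarrow{\mathbb{P}-a.s.} W$ as $n\to\infty$, where $W := I(\gamma_1(X_1|\mathcal{I}^{\mathbb{X}})<\infty)\cdot \mathbb{P}(X_1>\gamma_1(X_1|\mathcal{I}^{\mathbb{X}})-a|\mathcal{I}^{\mathbb{X}})$. Next, in the case $N(t)\xrightarrow{a.s.}\infty$, I apply Lemma~\ref{LEKM}(i) with this choice of $W,W_1,W_2,\ldots$ and the given process $(N(t),t\ge 0)$; this yields $W_{N(t)}\xrightarrow{\mathbb{P}-a.s.}W$ as $t\to\infty$, which is precisely \eqref{convas} once we rewrite $W_{N(t)}$ as ${\cal K}_t(k_{N(t)},a)/N(t)$. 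In the case $N(t)\xrightarrow{p}\infty$, the same argument with Lemma~\ref{LEKM}(ii) in place of (i) gives convergence in probability, i.e.\ \eqref{convP}. (One should note that $N(t)\xrightarrow{a.s.}\infty$ or $N(t)\xrightarrow{p}\infty$ forces $\mathbb{P}(N(t)\ge 1)\to 1$, so the identification $W_{N(t)} = {\cal K}_t(k_{N(t)},a)/N(t)$ on $\{N(t)\ge 1\}$ holds on a set of probability tending to one, which is all that is needed for both modes of convergence; alternatively one may set ${\cal K}_t(k_{N(t)},a)/N(t):=0$ on $\{N(t)=0\}$ by convention, consistent with the empty-sum convention stated in the introduction.)

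There is essentially no hard part here: the statement is flagged in the text as an immediate consequence, and the proof is a one-line combination of a cited lemma and the preceding theorem. The only point requiring a moment's care is the bookkeeping on the event $\{N(t)=0\}$, where $N(t)$ appears in a denominator — but since $N(t)\to\infty$ (a.s.\ or in probability), this event is asymptotically negligible and does not affect either conclusion. I would write the proof as: ``Set $W_n=K^{\mathbb{X}}(k_n,n,a)/n$. By Theorem~\ref{tw.3.4}, $W_n\xrightarrow{\mathbb{P}-a.s.}W$, where $W$ denotes the right-hand side of \eqref{convas}. Since ${\cal K}_t(k_{N(t)},a)/N(t)=W_{N(t)}$ whenever $N(t)\ge 1$, applying Lemma~\ref{LEKM}(i) (resp.\ (ii)) gives \eqref{convas} (resp.\ \eqref{convP}).''
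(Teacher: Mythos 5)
Your proposal is correct and is exactly the paper's route: the paper states that Theorem~\ref{th4.1} follows immediately from Lemma~\ref{LEKM} combined with Theorem~\ref{tw.3.4}, which is precisely your argument of substituting $N(t)$ into the a.s.\ convergent sequence $W_n=K^{\mathbb{X}}(k_n,n,a)/n$. Your extra remark handling the event $\{N(t)=0\}$ is a harmless bit of bookkeeping the paper leaves implicit.
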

It is worth pointing out that the condition $N(t)\xrightarrow{a.s.}\infty$ is satisfied whenever $(N(t),t\ge 0)$ is a~renewal counting process; see Embrechts et al. (1997, Section 2.5.2).

If in Theorem~\ref{th4.1} we want to replace the random normalizing process $N(t)$ by some deterministic function of~$t$, we need to add an assumption concerning the order of magnitude of~$N(t)$ to~$\infty$. For insurance and finance applications, it often suffices to restrict attention to process $(N(t),t\ge 0)$ satisfying $N(t)/c(t)\xrightarrow{a.s.}Z$, where $c(\cdot)$ is some positive function such that $c(t)\to\infty$ and $Z$ is some almost surely positive rv. The condition $N(t)/c(t)\xrightarrow{a.s.}Z$ is fulfilled with $c(t)=t$ and $Z=\lambda$, where $\lambda$ is a~positive constant, whenever $(N(t),t\ge 0)$ is a~renewal counting process with claim times having finite expectations; see Embrechts et al. (1997, Theorem 2.5.10). In particular it is satisfied when $(N(t),t\ge 0)$ is a~homogeneous Poisson process with intensity~$\lambda$.

We introduce the following condition to shorten formulation of results that are given in the sequel.
\begin{cond}\label{cond_A}
$(N(t),t\ge 0)$ is a~process of non-negative integer-valued rv's, $Z$ is an almost surely positive rv and $c(\cdot)$ is a positive function satisfying $c(t)\to\infty$ as $t\to\infty$.
\end{cond}

\begin{lem}\label{4lemat2}
Under Condition~\ref{cond_A}, we have
\begin{description}
 \item[(i)]
 if $N(t)/c(t)\xrightarrow{a.s.}Z$, then $N(t)\xrightarrow{a.s.}\infty$;
 \item[(ii)]
 if $N(t)/c(t)\xrightarrow{p}Z$, then $N(t)\xrightarrow{p}\infty$;
 \item[(iii)]
 if $N(t)/c(t)\xrightarrow{d}Z$, then $N(t)\xrightarrow{p}\infty$;
\end{description}
\end{lem}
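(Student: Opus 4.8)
The plan is to establish each of the three implications separately, with (ii) and (iii) following essentially the same pattern and (i) being the easiest. For part (i), suppose $N(t)/c(t)\xrightarrow{a.s.}Z$. Since $c(t)\to\infty$ and $Z>0$ almost surely, on the almost sure event where both the convergence holds and $Z(\omega)>0$ we have $N(t)(\omega) = \frac{N(t)(\omega)}{c(t)}\cdot c(t)$, and the first factor converges to $Z(\omega)>0$ while the second tends to $+\infty$; hence the product tends to $+\infty$. This gives $N(t)\xrightarrow{a.s.}\infty$ directly from the arithmetic conventions in $[-\infty,\infty]$ recalled in the Introduction.

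For part (ii), assume $N(t)/c(t)\xrightarrow{p}Z$. The natural route is to fix $M>0$ and show $\mathbb{P}(N(t)\le M)\to 0$. Choose $\varepsilon>0$ and write, for $t$ large enough that $c(t)>0$,
\[
\{N(t)\le M\}\subseteq \Big\{\frac{N(t)}{c(t)}\le \frac{M}{c(t)}\Big\}\subseteq \Big\{\Big|\frac{N(t)}{c(t)}-Z\Big|\ge \varepsilon\Big\}\cup\Big\{Z<\varepsilon+\frac{M}{c(t)}\Big\}.
\]
The first event has probability tending to $0$ by convergence in probability; the second event, since $c(t)\to\infty$, decreases to $\{Z<\varepsilon\}$ up to a null set, so its probability is at most $\mathbb{P}(Z\le\varepsilon)+o(1)$. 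Letting $t\to\infty$ and then $\varepsilon\downarrow 0$, and using $\mathbb{P}(Z=0)=0$ (so $\mathbb{P}(Z\le\varepsilon)\to 0$ by continuity of measure), we obtain $\limsup_{t\to\infty}\mathbb{P}(N(t)\le M)=0$ for every $M$, which is exactly $N(t)\xrightarrow{p}\infty$.

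For part (iii), assume $N(t)/c(t)\xrightarrow{d}Z$. The argument is the same as in (ii) except that convergence in probability of $N(t)/c(t)$ to $Z$ is no longer available; however, what the above computation actually uses is only a tail bound. Indeed, for any continuity point $\delta>0$ of the distribution of $Z$ we have $\{N(t)\le M\}\subseteq\{N(t)/c(t)\le \delta\}$ once $c(t)\ge M/\delta$, and $\limsup_{t\to\infty}\mathbb{P}(N(t)/c(t)\le\delta)\le \mathbb{P}(Z\le\delta)$ by the portmanteau characterization of convergence in distribution (applied to the closed set $(-\infty,\delta]$, or directly since $\delta$ is a continuity point). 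Since $\mathbb{P}(Z=0)=0$, choosing continuity points $\delta\downarrow 0$ gives $\mathbb{P}(Z\le\delta)\to 0$, whence $\limsup_{t\to\infty}\mathbb{P}(N(t)\le M)=0$ for every $M$, i.e. $N(t)\xrightarrow{p}\infty$.

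The only mild subtlety — and the place to be slightly careful — is handling the almost sure positivity of $Z$ when passing to the limit in $\varepsilon$ or $\delta$: one must invoke continuity from above of $\mathbb{P}$ along $\{Z\le\varepsilon\}\downarrow\{Z=0\}$ (equivalently, pick continuity points of $F_Z$ tending to $0$ in the distributional case) to conclude $\mathbb{P}(Z\le\varepsilon)\to\mathbb{P}(Z=0)=0$. Everything else is routine manipulation of events, and no ergodic-theoretic input is needed here; this lemma is purely a statement about how a random normalization that grows like $c(t)$ forces $N(t)\to\infty$ in the appropriate mode.
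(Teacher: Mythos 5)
Your proof is correct and follows essentially the same route as the paper: part (i) by multiplying $N(t)/c(t)\to Z>0$ by $c(t)\to\infty$ on a set of full measure, and part (iii) by bounding $\mathbb{P}(N(t)\le M)$ by $\mathbb{P}\bigl(N(t)/c(t)\le\delta\bigr)$ at continuity points $\delta$ of the distribution of $Z$ and then letting $\delta\downarrow 0$, using that $\mathbb{P}(Z\le 0)=0$. The only deviation is that the paper dispatches (ii) as an immediate consequence of (iii), since convergence in probability implies convergence in distribution, whereas you prove (ii) directly with a separate $\varepsilon$-decomposition --- correct, just more work than needed (and note that $\{Z\le\varepsilon\}$ decreases to $\{Z\le 0\}$ rather than $\{Z=0\}$, though both are null sets here, so your conclusion stands).
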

\begin{proof}
Part (i) is immediate, because $\lim_{n\to\infty} N(t)(\omega)=\infty$ for every $\omega\in\Omega$, where $A=\{\omega\in\Omega\colon \lim_{n\to\infty}\frac{N(t)(\omega)}{c(t)}=Z(\omega)\textrm{ and }Z(\omega)>0\}$ and $\mathbb{P}(A)=1$ by assumption.

Part (ii) follows from (iii), because the convergence in probability implies that in distribution.

What is left is to show (iii).  To do this, fix $D>0$ and choose a~sequence $(t_n,n\ge 1)$ with the property that for $t\ge t_n$, $c(t)\ge n$.
The existence of such a~sequence is guaranteed by the assumption that $c(t)\to\infty$ as $t\to\infty$. Then, for $t\ge t_n$, and provided that
\begin{equation}\label{gwiazda}
    D/n\textrm{ is a continuity point of the cumulative distribution function of }Z,
\end{equation}
we have
\[
\mathbb{P}(N(t)\le D)= \mathbb{P}\Big(\frac{N(t)}{c(t)}\le \frac{D}{c(t)}\Big)\le \mathbb{P}\Big(\frac{N(t)}{c(t)}\le \frac{D}{n}\Big)\to \mathbb{P}\Big(Z\le\frac{D}{n}\Big)\textrm{ as }t\to\infty,
\]
where the convergence is due to the assumption that $N(t)/c(t)\xrightarrow{d}Z$ as $t\to\infty$. Since $D$ is an arbitrary positive constant satisfying~\eqref{gwiazda}, we obtain, for all but at most countably many~$D$,
\begin{equation}\label{s4lw1}
    \limsup_{t\to\infty} \mathbb{P}(N(t)\le D)\le \mathbb{P}\Big(Z\le\frac{D}{n}\Big).
\end{equation}
By letting $n\to\infty$ and using the right continuity of cumulative distribution functions we see that~\eqref{s4lw1}
implies $\limsup_{t\to\infty} \mathbb{P}(N(t)\le D)\le 0$.
It follows that
\[
\limsup_{t\to\infty} \mathbb{P}(N(t)\le D)= 0\textrm{ for any }D>0,
\]
and this is precisely the conclusion of~(iii).
\end{proof}

Combining Theorem~\ref{th4.1} and Lemma~\ref{4lemat2} we immediately obtain the following result.

\begin{tw}\label{th4.2}
Let Condition~\ref{cond_A} and the assumptions of Theorem~\ref{tw.5} hold.
\begin{description}
 \item[(i)]
 If
 \begin{equation}\label{as1th4.2}
     N(t)/c(t)\xrightarrow{a.s.}Z\quad \left(N(t)/c(t)\xrightarrow{p}Z\right)\textrm{ as }t\to\infty,
 \end{equation}
 then, as $t\to\infty$,
 \begin{align}\label{c2th4.2}
     {\cal K}_t(k_{N(t)},a)/c(t)\xrightarrow{a.s.} Z\cdot I(\gamma_1(X_1|\mathcal{I}^{\mathbb{X}})<\infty)\cdot \mathbb{P}(X_1>\gamma_1(X_1|\mathcal{I}^{\mathbb{X}}) -a|\mathcal{I}^{\mathbb{X}})\nonumber\\
     \left({\cal K}_t(k_{N(t)},a)/c(t)\xrightarrow{p} Z\cdot I(\gamma_1(X_1|\mathcal{I}^{\mathbb{X}})<\infty)\cdot \mathbb{P}(X_1>\gamma_1(X_1|\mathcal{I}^{\mathbb{X}}) -a|\mathcal{I}^{\mathbb{X}})\right).
 \end{align}
 \item[(ii)]
 If
 \begin{equation}\label{as2th4.2}
     N(t)/c(t)\xrightarrow{d}Z\textrm{ as }t\to\infty,
 \end{equation}
 then, as $t\to\infty$,
  \begin{align}\label{c3th4.2}
     {\cal K}_t(k_{N(t)},a)/c(t)\xrightarrow{d} Z\cdot I(\gamma_1(X_1|\mathcal{I}^{\mathbb{X}})<\infty)\cdot \mathbb{P}(X_1>\gamma_1(X_1|\mathcal{I}^{\mathbb{X}}) -a|\mathcal{I}^{\mathbb{X}})
 \end{align}
 provided that $I(\gamma_1(X_1|\mathcal{I}^{\mathbb{X}})<\infty)\cdot \mathbb{P}(X_1>\gamma_1(X_1|\mathcal{I}^{\mathbb{X}}) -a|\mathcal{I}^{\mathbb{X}})$ is degenerate. If in turn $Z$ is degenerate, then~\eqref{as2th4.2} implies $N(t)/c(t)\xrightarrow{p}Z$ and hence~\eqref{c2th4.2} holds by~(i).
\end{description}
\end{tw}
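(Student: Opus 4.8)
The plan is to deduce Theorem~\ref{th4.2} directly from Theorem~\ref{th4.1} together with Lemma~\ref{4lemat2}, treating the $a.s.$, in-probability and in-distribution cases in turn. Throughout write, for brevity, $V:=I(\gamma_1(X_1|\mathcal{I}^{\mathbb{X}})<\infty)\cdot \mathbb{P}(X_1>\gamma_1(X_1|\mathcal{I}^{\mathbb{X}})-a|\mathcal{I}^{\mathbb{X}})$ for the limiting random variable appearing in Theorem~\ref{th4.1}, and note the elementary identity
\[
{\cal K}_t(k_{N(t)},a)/c(t)=\Big({\cal K}_t(k_{N(t)},a)/N(t)\Big)\cdot\Big(N(t)/c(t)\Big),
\]
valid on the event $\{N(t)\ge 1\}$; since the relevant convergence of $N(t)/c(t)$ forces $\mathbb{P}(N(t)\ge 1)\to 1$ (indeed $N(t)\to\infty$ in the appropriate mode by Lemma~\ref{4lemat2}), the exceptional event can be discarded.

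For part~(i), first suppose $N(t)/c(t)\xrightarrow{a.s.}Z$. By Lemma~\ref{4lemat2}(i) this yields $N(t)\xrightarrow{a.s.}\infty$, so the hypotheses of Theorem~\ref{th4.1} are met and ${\cal K}_t(k_{N(t)},a)/N(t)\xrightarrow{\mathbb{P}-a.s.}V$ as $t\to\infty$. Multiplying the two almost surely convergent families using the factorization above gives ${\cal K}_t(k_{N(t)},a)/c(t)\xrightarrow{a.s.}Z\cdot V$, which is~\eqref{c2th4.2}. The parenthetical claim is entirely parallel: $N(t)/c(t)\xrightarrow{p}Z$ gives $N(t)\xrightarrow{p}\infty$ by Lemma~\ref{4lemat2}(ii), hence ${\cal K}_t(k_{N(t)},a)/N(t)\xrightarrow{p}V$ by Theorem~\ref{th4.1}, and the product of two sequences convergent in probability converges in probability to the product of the limits, giving the in-probability version of~\eqref{c2th4.2}.

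For part~(ii), assume $N(t)/c(t)\xrightarrow{d}Z$ and that $V$ is degenerate, say $V=v$ $\mathbb{P}$-a.s. for some constant $v\in[0,1]$. By Lemma~\ref{4lemat2}(iii) we have $N(t)\xrightarrow{p}\infty$, so Theorem~\ref{th4.1} (in-probability statement) gives ${\cal K}_t(k_{N(t)},a)/N(t)\xrightarrow{p}v$. A convergence-in-probability-to-a-constant factor combines with a convergence-in-distribution factor: writing ${\cal K}_t(k_{N(t)},a)/c(t)=v\cdot(N(t)/c(t))+\big({\cal K}_t(k_{N(t)},a)/N(t)-v\big)\cdot(N(t)/c(t))$, the first term converges in distribution to $vZ=ZV$ by the mapping $x\mapsto vx$, while the second term is $o_p(1)$ times an $O_p(1)$ quantity (tightness of $N(t)/c(t)$ following from its convergence in distribution) and hence converges to $0$ in probability; Slutsky's theorem yields~\eqref{c3th4.2}. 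Finally, if $Z$ is degenerate then convergence in distribution to a constant is equivalent to convergence in probability to that constant, so~\eqref{as2th4.2} reduces to the in-probability case of~\eqref{as1th4.2} and part~(i) applies.

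None of the steps is genuinely hard; the only point requiring a little care is the combination of modes of convergence in part~(ii), where one must invoke Slutsky-type arguments correctly (a product of a sequence converging in distribution and a sequence converging in probability to a constant converges in distribution to the scaled limit) rather than naively multiplying two sequences convergent in distribution. I expect this mixed-mode bookkeeping — and the verification that the degenerate-$Z$ subcase genuinely falls back on part~(i) — to be the main thing to get right; the $a.s.$ and in-probability cases of part~(i) are routine given Theorem~\ref{th4.1} and Lemma~\ref{4lemat2}.
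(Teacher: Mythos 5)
Your proposal is correct and follows essentially the same route as the paper: the factorization ${\cal K}_t(k_{N(t)},a)/c(t)=\bigl({\cal K}_t(k_{N(t)},a)/N(t)\bigr)\cdot\bigl(N(t)/c(t)\bigr)$, Lemma~\ref{4lemat2} plus Theorem~\ref{th4.1} to control the first factor, joint convergence and continuous mapping for part~(i), and a Slutsky argument for part~(ii). Your explicit $o_p(1)\cdot O_p(1)$ decomposition in part~(ii) just spells out what the paper compresses into a single appeal to the Slutsky Lemma, so no substantive difference.
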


\begin{proof}
Clearly
\begin{equation}\label{pth4.1.0}
    \frac{{\cal K}_t(k_{N(t)},a)}{c(t)}= \frac{{\cal K}_t(k_{N(t)},a)}{N(t)}\cdot \frac{N(t)}{c(t)}.
\end{equation}
To prove~(i) we use Lemma~\ref{4lemat2} and Theorem~\ref{th4.1} to observe that~\eqref{convas} (\eqref{convP}) holds. From this and~\eqref{as1th4.2} we obtain
\begin{align}
\Big(\frac{{\cal K}_t(k_{N(t)},a)}{N(t)}, \frac{N(t)}{c(t)}\Big)\xrightarrow{a.s.}  \Big( I(\gamma_1(X_1|\mathcal{I}^{\mathbb{X}})<\infty)\cdot \mathbb{P}(X_1>\gamma_1(X_1|\mathcal{I}^{\mathbb{X}}) -a|\mathcal{I}^{\mathbb{X}}), Z\Big)  \label{pth4.1.1} \\
\left(  \Big(\frac{{\cal K}_t(k_{N(t)},a)}{N(t)}, \frac{N(t)}{c(t)}\Big)\xrightarrow{p}  \Big( I(\gamma_1(X_1|\mathcal{I}^{\mathbb{X}})<\infty)\cdot \mathbb{P}(X_1>\gamma_1(X_1|\mathcal{I}^{\mathbb{X}}) -a|\mathcal{I}^{\mathbb{X}}), Z\Big)\right).    \label{pth4.1.2}
 \end{align}
The above joint convergence is obvious in the a.s. version. Its validity in the case of convergence in probability follows from, for example, Theorem~2.7~(vi) of~van der Vaart (1998).
Combining~\eqref{pth4.1.0} with~\eqref{pth4.1.1} and~\eqref{pth4.1.2}, and using Continuous Mapping Theorem finish the proof of~(i).

For (ii) observe that part (iii) of Lemma~\ref{4lemat2} and Theorem~\ref{th4.1} imply~\eqref{convP}. Moreover, the limiting rv in~\eqref{convP} is degenerate by assumption. Applying~\eqref{pth4.1.0}, \eqref{convP}, \eqref{as2th4.2} and Slutsky Lemma gives~\eqref{c3th4.2}, and the proof is complete.
\end{proof}

In the case when  $\gamma_1(X_1|\mathcal{I}^{\mathbb{X}})=\infty$ a.s. Theorem~\ref{th4.2} takes on a~simpler form.
\begin{co}\label{c4.1}
Let Condition~\ref{cond_A} hold, $\mathbb{X}=(X_n,n\ge 1)$ be a~strictly stationary sequence of rv's such that $\gamma_1(X_1|\mathcal{I}^{\mathbb{X}})=\infty$ a.s. and $(k_n,n\ge 1)$ be a~sequence satisfying~\eqref{warK}. If
\begin{equation}\label{ac4.1}
    N(t)/c(t)\xrightarrow{a.s.}Z\quad (N(t)/c(t)\xrightarrow{d}Z)\textrm{ as }t\to\infty,
\end{equation}
then
\begin{equation}\label{tc4.1}
    \frac{{\cal K}_t(k_{N(t)},a)}{c(t)}\xrightarrow{a.s.}0\quad
   \left(\frac{{\cal K}_t(k_{N(t)},a)}{c(t)}\xrightarrow{p}0\right) \textrm{ as }t\to\infty.
\end{equation}
\end{co}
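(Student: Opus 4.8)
The plan is to derive Corollary \ref{c4.1} as a direct specialization of Theorem \ref{th4.2}. The key observation is that under the hypothesis $\gamma_1(X_1|\mathcal{I}^{\mathbb{X}})=\infty$ a.s., the indicator factor $I(\gamma_1(X_1|\mathcal{I}^{\mathbb{X}})<\infty)$ equals $0$ $\mathbb{P}$-a.s., so the entire limiting expression $I(\gamma_1(X_1|\mathcal{I}^{\mathbb{X}})<\infty)\cdot \mathbb{P}(X_1>\gamma_1(X_1|\mathcal{I}^{\mathbb{X}}) -a|\mathcal{I}^{\mathbb{X}})$ collapses to $0$, which is of course degenerate.

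First I would verify that the assumptions of Theorem \ref{tw.5} are automatically satisfied here. Indeed, $\gamma_1(X_1|\mathcal{I}^{\mathbb{X}})=\infty$ a.s. forces $\mathbb{P}(X_1=\gamma_1(X_1|\mathcal{I}^{\mathbb{X}}))=\mathbb{P}(X_1=\infty)=0$ and likewise $\mathbb{P}(X_1=\gamma_1(X_1|\mathcal{I}^{\mathbb{X}})-a)=\mathbb{P}(X_1=\infty)=0$, since $X_1$ is a (finite-valued) rv; hence conditions \eqref{zal_x3} and \eqref{zal_x2} hold. Together with Condition \ref{cond_A} and $(k_n,n\ge 1)$ satisfying \eqref{warK}, all hypotheses of Theorem \ref{th4.2} are in force.

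Next I would split into the two cases of \eqref{ac4.1}. If $N(t)/c(t)\xrightarrow{a.s.}Z$, then Theorem \ref{th4.2}(i) applies and \eqref{c2th4.2} gives ${\cal K}_t(k_{N(t)},a)/c(t)\xrightarrow{a.s.} Z\cdot 0 = 0$, which is the left-hand statement of \eqref{tc4.1}. If instead $N(t)/c(t)\xrightarrow{d}Z$, then since the limiting rv $I(\gamma_1(X_1|\mathcal{I}^{\mathbb{X}})<\infty)\cdot \mathbb{P}(X_1>\gamma_1(X_1|\mathcal{I}^{\mathbb{X}}) -a|\mathcal{I}^{\mathbb{X}})=0$ is degenerate, Theorem \ref{th4.2}(ii) applies and \eqref{c3th4.2} gives ${\cal K}_t(k_{N(t)},a)/c(t)\xrightarrow{d} Z\cdot 0 = 0$; convergence in distribution to the constant $0$ is equivalent to convergence in probability to $0$, yielding the right-hand statement of \eqref{tc4.1}.

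I do not anticipate a genuine obstacle here: the corollary is a bookkeeping consequence of Theorem \ref{th4.2}. The only point requiring a line of care is confirming that $I(\gamma_1(X_1|\mathcal{I}^{\mathbb{X}})<\infty)=0$ $\mathbb{P}$-a.s. — which is immediate from the hypothesis and the a.s.-uniqueness of versions of $\gamma_1(X_1|\mathcal{I}^{\mathbb{X}})$ — and, in the convergence-in-distribution case, invoking the standard fact that a limit in distribution equal to a constant upgrades to convergence in probability.
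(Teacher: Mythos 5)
Your proposal is correct and follows essentially the same route as the paper's own proof: check that \eqref{zal_x3} and \eqref{zal_x2} hold automatically, note that $I(\gamma_1(X_1|\mathcal{I}^{\mathbb{X}})<\infty)=0$ a.s., then apply part (i) of Theorem~\ref{th4.2} in the almost sure case and part (ii) in the distributional case, upgrading convergence in distribution to the constant $0$ to convergence in probability. Your extra remarks spelling out why the two conditions hold (the paper just says ``clearly'') are accurate and do not change the argument.
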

\begin{proof}
If $\gamma_1(X_1|\mathcal{I}^{\mathbb{X}})=\infty$ a.s., then clearly~\eqref{zal_x3} and~\eqref{zal_x2} are satisfied so we can use Theorem~\ref{th4.2}. Moreover $I(\gamma_1(X_1|\mathcal{I}^{\mathbb{X}})<\infty)=0$ a.s. Hence applying part~(i) of Theorem~\ref{th4.2} we get ${\cal K}_t(k_{N(t)},a)/c(t)\xrightarrow{a.s.}0$ if $N(t)/c(t)\xrightarrow{a.s.}Z$. If $N(t)/c(t)\xrightarrow{d}Z$, then using part~(ii) of Theorem~\ref{th4.2} we obtain ${\cal K}_t(k_{N(t)},a)/c(t)\xrightarrow{d}0$, which is equivalent to ${\cal K}_t(k_{N(t)},a)/c(t)\xrightarrow{p}0$.
\end{proof}

Obviously, taking $k_n=n$, $n\ge 1$, in Theorems~\ref{th4.1}, \ref{th4.2} and Corollary~\ref{c4.1} we obtain results describing the asymptotic behavior of normalized numbers of near-maximum insurance claims. Note that the form of the sequence $(k_n,n\ge 1)$ does not affect the conclusion of Theorems~\ref{th4.1}, \ref{th4.2} and Corollary~\ref{c4.1} as long as~\eqref{warK} is satisfied. This means that the normalized numbers of near-maximum insurance claims exhibit the same asymptotic behavior as normalized numbers of claims  in a left neighborhood of  the $m$th largest claim ($m\in\mathbb{N}$) or even as normalized numbers of claims  in a left neighborhood of  the $m_n$th largest claim unless $m_n/n\to 0$ as $n\to\infty$.

We conclude this section with results concerning yet another quantities of interest - the sum of near-maxima:
\[
S_n(a)=\sum_{i=1}^n X_i\cdot I(X_{n:n}-a<X_i<X_{n:n}),
\]
and the total value of near-maximum insurance claims:
\[
\mathcal{S}_t(a)=\sum_{i=1}^{N(t)} X_i\cdot I(X_{N(t):N(t)}-a<X_i<X_{N(t):N(t)}).
\]
Again, we will consider a~more general case and will deal with the sum of observations  in a left neighborhood of  the $k_n$th order statistic:
\[
S(k_n,n,a)=\sum_{i=1}^n X_i\cdot I(X_{k_n:n}-a<X_i<X_{k_n:n})
\]
and the total value of claims  in a left neighborhood of  the $(n-k_n+1)$st largest insurance claim:
\[
\mathcal{S}_t(k_{N(t)},a)=\sum_{i=1}^{N(t)} X_i\cdot I(X_{k_{N(t)}:N(t)}-a<X_i<X_{k_{N(t)}:N(t)}).
\]

\begin{tw}\label{total}
Let $(k_n,n\ge 1)$ be a~sequence satisfying~\eqref{warK} and $\mathbb{X}=(X_n,n\ge~1)$ be a~strictly stationary sequence of rv's such that $\gamma_1(X_1|\mathcal{I}^{\mathbb{X}})=\infty$ a.s. Then
\begin{equation}\label{tot1}
\frac{S(k_n,n,a)}{X_{k_n:n}}\sim K(k_n,n,a)  \quad a.s.\hbox{ as } n\to\infty.
\end{equation}
If moreover $(N(t),t\ge 0)$ is a~process of non-negative integer valued rv's $N(t)$ such that $N(t)\xrightarrow{a.s.}\infty$, 
 then, as $t\to\infty$,
$$
\frac{\mathcal{S}_t(k_{N(t)},a)}{X_{k_{N(t)}:N(t)}}\sim \mathcal{K}_t(k_{N(t)},a)  \quad a.s.\hbox{ as } n\to\infty.
$$
\end{tw}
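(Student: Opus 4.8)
The plan is to prove the deterministic-sample-size relation~\eqref{tot1} by a pathwise sandwich argument and then transfer it to random sample sizes by substituting $n=N(t)$, as in the proof of Lemma~\ref{LEKM}(i).

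First, fix $n$. For every index $i$ that contributes to the two sums one has $X_{k_n:n}-a<X_i<X_{k_n:n}$, so multiplying by the indicator $I(X_{k_n:n}-a<X_i<X_{k_n:n})$ and summing over $i=1,\dots,n$ gives, identically on $\Omega$,
\[
(X_{k_n:n}-a)\,K(k_n,n,a)\ \le\ S(k_n,n,a)\ \le\ X_{k_n:n}\,K(k_n,n,a),
\]
the left inequality being obtained term by term and hence valid whatever the sign of $X_{k_n:n}-a$. Since $\gamma_1(X_1|\mathcal{I}^{\mathbb{X}})=\infty$ a.s., Theorem~\ref{tw.2} gives $X_{k_n:n}\xrightarrow{\mathbb{P}-a.s.}\infty$; in particular, $\mathbb{P}$-a.s.\ there is $n_0=n_0(\omega)$ with $X_{k_n:n}>a>0$ for all $n\ge n_0$. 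Dividing by $X_{k_n:n}>0$ on this event yields, $\mathbb{P}$-a.s., that for all $n\ge n_0$
\[
0\ \le\ K(k_n,n,a)-\frac{S(k_n,n,a)}{X_{k_n:n}}\ \le\ \frac{a}{X_{k_n:n}}\,K(k_n,n,a).
\]
Since $a/X_{k_n:n}\to 0$ a.s., this is precisely the assertion $S(k_n,n,a)/X_{k_n:n}\sim K(k_n,n,a)$ a.s.; note that when $K(k_n,n,a)=0$ all three displayed quantities vanish, so the estimate is consistent with the asymptotic-equivalence notation and one never needs $K(k_n,n,a)$ to be eventually positive — which is fortunate, since by Corollary~\ref{con3.1} it need not be.

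For the second assertion I would work on the full-probability event $\Omega_0$ on which $X_{k_n:n}\to\infty$, the additive estimate above holds, and $N(t)\to\infty$. Fix $\omega\in\Omega_0$. For all $t$ large enough $N(t)(\omega)\ge n_0(\omega)$, and since $\mathcal{K}_t(k_{N(t)},a)$, $\mathcal{S}_t(k_{N(t)},a)$ and $X_{k_{N(t)}:N(t)}$ arise from $K(k_n,n,a)$, $S(k_n,n,a)$ and $X_{k_n:n}$ by the substitution $n=N(t)(\omega)$, the additive estimate evaluated there gives, for all large $t$,
\[
0\ \le\ \mathcal{K}_t(k_{N(t)},a)-\frac{\mathcal{S}_t(k_{N(t)},a)}{X_{k_{N(t)}:N(t)}}\ \le\ \frac{a}{X_{k_{N(t)}:N(t)}}\,\mathcal{K}_t(k_{N(t)},a).
\]
Moreover $N(t)(\omega)\to\infty$ forces $X_{k_{N(t)}:N(t)}(\omega)\to\infty$ — this is exactly Lemma~\ref{LEKM}(i) applied to $X_{k_n:n}\xrightarrow{a.s.}\infty$ — so $a/X_{k_{N(t)}:N(t)}\to 0$ a.s., and the stated equivalence follows.

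I do not anticipate a genuine obstacle. The only points needing care are: verifying that the term-by-term lower bound gives $(X_{k_n:n}-a)K(k_n,n,a)\le S(k_n,n,a)$ irrespective of signs, so that division by the eventually-positive $X_{k_n:n}$ is legitimate; and pinning down the meaning of $\sim$ so that the argument is not derailed by the fact that $K(k_n,n,a)$ may vanish for infinitely many $n$ — both are handled cleanly by phrasing everything through the additive estimate $|K(k_n,n,a)-S(k_n,n,a)/X_{k_n:n}|\le (a/X_{k_n:n})K(k_n,n,a)$. The passage to random sample size is then the routine $n=N(t)$ substitution underpinning Lemma~\ref{LEKM}(i).
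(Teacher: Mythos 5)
Your proposal is correct and follows essentially the same route as the paper: the Li--Pakes sandwich $(X_{k_n:n}-a)K(k_n,n,a)\le S(k_n,n,a)\le X_{k_n:n}K(k_n,n,a)$, division by $X_{k_n:n}$, which tends a.s.\ to $\infty$ by Theorem~\ref{tw.2}, and then the passage to random sample size via the mechanism of Lemma~\ref{LEKM}(i). Your extra care about signs in the term-by-term lower bound and about the meaning of $\sim$ when $K(k_n,n,a)$ vanishes only makes explicit what the paper's terser argument leaves implicit.
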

\begin{proof}
Following Li and Pakes (2001) we observe that
\[
(X_{k_n:n}-a)K(k_n,n,a)\le S(k_n,n,a)\le X_{k_n:n} K(k_n,n,a),\quad n\ge 1,
\]
which can be rewritten as
\begin{equation}\label{tot2}
    \left(1-\frac{a}{X_{k_n:n}}\right)K(k_n,n,a)   \le \frac{S(k_n,n,a)}{X_{k_n:n}}\le K(k_n,n,a),
\end{equation}
unless  $X_{k_n:n}>0$. If $\gamma_1(X_1|\mathcal{I}^{\mathbb{X}})=\infty$, then $X_{k_n:n}\xrightarrow{a.s.}\infty$, by Theorem~\ref{tw.2}. Letting $n\to\infty$ in~\eqref{tot2} gives~\eqref{tot1}. The second part of Theorem~\ref{total} follows now from Lemma~\ref{LEKM}.
\end{proof}

The next result is a simple consequence of Theorem \ref{total} and Corollaries \ref{con3.1} and \ref{c4.1}.
\begin{tw}\label{totalBis}
Under the assumptions of Corollary \ref{con3.1},
$$
 \frac{S(k_n,n,a)}{nX_{k_n:n}}\xrightarrow{a.s.}0\textrm{ as }n\to\infty.
$$
If moreover Condition \ref{cond_A} is satisfied and $N(t)/c(t)\xrightarrow{a.s.}Z$, then
$$
 \frac{\mathcal{S}_t(k_{N(t)},a)}{c(t)X_{k_{N(t)}:N(t)}}\xrightarrow{a.s.}0    \textrm{ as }t\to\infty.
$$
\end{tw}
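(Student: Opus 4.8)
The plan is to deduce this from Theorem~\ref{total} together with the limiting results already available for the normalized counts. The key observation is that Theorem~\ref{total} gives the asymptotic equivalence
\[
\frac{S(k_n,n,a)}{X_{k_n:n}}\sim K(k_n,n,a)\quad a.s.\hbox{ as }n\to\infty,
\]
under the standing hypothesis $\gamma_1(X_1|\mathcal{I}^{\mathbb{X}})=\infty$ a.s. Dividing both sides by $n$ and invoking Corollary~\ref{con3.1} (whose assumptions are exactly those being assumed here), we get
\[
\frac{S(k_n,n,a)}{nX_{k_n:n}}=\frac{1}{n}\cdot\frac{S(k_n,n,a)}{X_{k_n:n}}\sim \frac{K^\mathbb{X}(k_n,n,a)}{n}\xrightarrow{a.s.}0,
\]
so the first assertion follows. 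One should be mildly careful about the meaning of ``$\sim$'': on the (null) event where $X_{k_n:n}$ fails to tend to $\infty$ or where $K^\mathbb{X}(k_n,n,a)=0$ eventually, one simply argues directly that the ratio is $0$ or uses the sandwich \eqref{tot2}; since everything holds only $a.s.$ this is harmless, but it is worth a sentence.

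For the second assertion I would proceed identically in the random-index setting. By Theorem~\ref{total}, under $N(t)\xrightarrow{a.s.}\infty$ we have $\mathcal{S}_t(k_{N(t)},a)/X_{k_{N(t)}:N(t)}\sim \mathcal{K}_t(k_{N(t)},a)$ a.s.; dividing by $c(t)$ gives
\[
\frac{\mathcal{S}_t(k_{N(t)},a)}{c(t)X_{k_{N(t)}:N(t)}}\sim \frac{\mathcal{K}_t(k_{N(t)},a)}{c(t)},
\]
and the right-hand side tends to $0$ a.s.\ by Corollary~\ref{c4.1} (the a.s.\ branch), whose hypotheses — Condition~\ref{cond_A}, $\gamma_1(X_1|\mathcal{I}^{\mathbb{X}})=\infty$ a.s., $N(t)/c(t)\xrightarrow{a.s.}Z$, and \eqref{warK} — are precisely what is being assumed. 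Note that $N(t)/c(t)\xrightarrow{a.s.}Z$ with $Z$ a.s.\ positive forces $N(t)\xrightarrow{a.s.}\infty$ (Lemma~\ref{4lemat2}(i)), so the hypothesis of the second part of Theorem~\ref{total} is automatically met.

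The argument is essentially a two-line chaining of previously established results, so there is no real obstacle; the only point requiring a little attention is bookkeeping about the exceptional null sets and the degenerate cases in the relation ``$\sim$'' (e.g.\ $X_{k_n:n}\le 0$ for small $n$, or $K^\mathbb{X}(k_n,n,a)=0$), which one disposes of by the elementary sandwich in \eqref{tot2} exactly as in the proof of Theorem~\ref{total}. I would write the proof in three short sentences per part, citing Theorem~\ref{total}, then Corollary~\ref{con3.1} (resp.\ Corollary~\ref{c4.1}), and noting that a.s.\ asymptotic equivalence is preserved under multiplication by the deterministic null sequence $1/n$ (resp.\ $1/c(t)$).
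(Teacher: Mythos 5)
Your proposal is correct and follows exactly the route the paper intends: the paper gives no separate proof, stating only that the result is "a simple consequence of Theorem~\ref{total} and Corollaries~\ref{con3.1} and~\ref{c4.1}," which is precisely your chaining of the asymptotic equivalence with the null limits for $K^\mathbb{X}(k_n,n,a)/n$ and $\mathcal{K}_t(k_{N(t)},a)/c(t)$ (with $N(t)\xrightarrow{a.s.}\infty$ supplied by Lemma~\ref{4lemat2}(i)). Your remark that the degenerate cases of ``$\sim$'' are best handled by dividing the sandwich \eqref{tot2} by $n$ (resp. $c(t)$) is the right way to make the argument airtight.
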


\medskip
Examples of strictly stationary sequences $\mathbb{X}=(X_n,n\ge 1)$ satisfying $\gamma_1(X_1|\mathcal{I}^{\mathbb{X}})=\infty$ will be given in the sequel in Subsection~\ref{subsec:Sums}.

\section{Examples}
\label{sec:Examples}
In this section we apply results derived in the paper to some special classes of strictly stationary sequences.

\subsection{Strictly stationary and ergodic processes}
\label{subsec:Ergodic}

\begin{tw}\label{stat_ergodic}
Let $\mathbb{X}=(X_n,n\ge 1)$ be a~strictly stationary and ergodic sequence of rv's and $(k_n,n\ge 1)$ be a~sequence satisfying~\eqref{warK}.
\begin{description}
 \item[(i)]
 If $\gamma_1^{X_1}=\infty$ then~\eqref{zbiega_0} holds. If moreover Condition~\ref{cond_A} and~\eqref{ac4.1} are fulfilled, then~\eqref{tc4.1} holds.

\medskip
 \item[(ii)]
If $\gamma_1^{X_1}<\infty$ and
\begin{equation}\label{a5.1.1}
    \mathbb{P}(X_1=\gamma_1^{X_1})=0\textrm{ and }\mathbb{P}(X_1=\gamma_1^{X_1}-a)=0,
\end{equation}
then
\begin{equation}\label{t5.1.0}
   K(k_n,n,a)/n\xrightarrow{a.s.}\mathbb{P}(X_1>\gamma_1^{X_1}-a)\textrm{ as }n\to\infty. 
\end{equation}
Suppose further that Condition~\ref{cond_A} is satisfied. Then 
\begin{equation}
\label{t5.1.1}
\bullet   \quad      \mathcal{K}_t(k_{N(t)},a)/c(t)\xrightarrow{a.s.}Z\cdot\mathbb{P}(X_1>\gamma_1^{X_1}-a)\textrm{ as }t\to\infty,  \quad  \quad   
    \end{equation}
\phantom{$\bullet   \quad \quad $ }    provided that
    \begin{equation}\label{z5.1.1}
        N(t)/c(t)\xrightarrow{a.s.}Z\textrm{ as }t\to\infty;
    \end{equation}
 \begin{equation}
\label{t5.1.2}
 \bullet   \quad      \mathcal{K}_t(k_{N(t)},a)/c(t)\xrightarrow{p}Z\cdot\mathbb{P}(X_1>\gamma_1^{X_1}-a)\textrm{ as }t\to\infty,  \quad \quad  \quad
    \end{equation}
\phantom{$\bullet   \quad \quad $ }     provided that
    \begin{equation}\label{z5.1.2}
         N(t)/c(t)\xrightarrow{p}Z\textrm{ as }t\to\infty;
    \end{equation}
   \begin{equation}
\label{t5.1.3}
 \bullet   \quad             \mathcal{K}_t(k_{N(t)},a)/c(t)\xrightarrow{d}Z\cdot\mathbb{P}(X_1>\gamma_1^{X_1}-a)\textrm{ as }t\to\infty, \quad \quad 
    \end{equation}
\phantom{$\bullet   \quad \quad $ }      unless
    \begin{equation}\label{z5.1.3}
         N(t)/c(t)\xrightarrow{d}Z\textrm{ as }t\to\infty.
    \end{equation}
\end{description}
\end{tw}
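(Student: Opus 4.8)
The plan is to reduce everything to the results of Sections~\ref{sec3} and~\ref{Sec4} by exploiting the fact that, for an ergodic $\mathbb{X}$, the sigma-field $\mathcal{I}^{\mathbb{X}}$ is $\mathbb{P}$-trivial. First I would recall that ergodicity of the strictly stationary sequence $\mathbb{X}=(X_n,n\ge 1)$ means exactly that every $A\in\mathcal{I}^{\mathbb{X}}$ satisfies $\mathbb{P}(A)\in\{0,1\}$: by Lemma~\ref{l2}(iii) such an $A$ has the form $\{\omega\colon(X_1(\omega),X_2(\omega),\ldots)\in B\}$ for some $B\in\mathcal{I}_{\mathbb{Q}}$, so $\mathbb{P}(A)=\mathbb{Q}(B)\in\{0,1\}$ by ergodicity of $\mathbb{Q}$. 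Consequently every $\mathcal{I}^{\mathbb{X}}$-measurable extended rv is $\mathbb{P}$-a.s.\ constant, and for every integrable rv $\xi$ one has $\mathbb{E}_{\mathbb{P}}(\xi|\mathcal{I}^{\mathbb{X}})=\mathbb{E}_{\mathbb{P}}(\xi)$ $\mathbb{P}$-a.s.

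In particular $\gamma_1(X_1|\mathcal{I}^{\mathbb{X}})$ is $\mathbb{P}$-a.s.\ constant, so Theorem~\ref{stalyKwantyl} yields $\gamma_1(X_1|\mathcal{I}^{\mathbb{X}})=\gamma_1^{X_1}$ $\mathbb{P}$-a.s. This single identification is what turns the general formulas of Sections~\ref{sec3}--\ref{Sec4} into the concrete statements above. For part~(i): if $\gamma_1^{X_1}=\infty$, then $\gamma_1(X_1|\mathcal{I}^{\mathbb{X}})=\infty$ $\mathbb{P}$-a.s., so Corollary~\ref{con3.1} gives~\eqref{zbiega_0}; adding Condition~\ref{cond_A} and~\eqref{ac4.1}, Corollary~\ref{c4.1} gives~\eqref{tc4.1}.

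For part~(ii): if $\gamma_1^{X_1}<\infty$, then $\gamma_1(X_1|\mathcal{I}^{\mathbb{X}})=\gamma_1^{X_1}<\infty$ $\mathbb{P}$-a.s., so assumptions~\eqref{a5.1.1} are precisely~\eqref{zal_x3} and~\eqref{zal_x2}, and Theorem~\ref{tw.3.4} applies. Since $I(\gamma_1(X_1|\mathcal{I}^{\mathbb{X}})<\infty)=1$ $\mathbb{P}$-a.s.\ and, by the triviality of $\mathcal{I}^{\mathbb{X}}$, $\mathbb{P}(X_1>\gamma_1(X_1|\mathcal{I}^{\mathbb{X}})-a|\mathcal{I}^{\mathbb{X}})=\mathbb{P}(X_1>\gamma_1^{X_1}-a)$ $\mathbb{P}$-a.s., this gives~\eqref{t5.1.0}. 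For the random-index statements I would invoke Theorem~\ref{th4.2}: the limiting rv appearing in~\eqref{c2th4.2}--\eqref{c3th4.2} equals the constant $\mathbb{P}(X_1>\gamma_1^{X_1}-a)$, hence is degenerate, so both parts of Theorem~\ref{th4.2} are available. Then~\eqref{z5.1.1} together with part~(i) of Theorem~\ref{th4.2} gives~\eqref{t5.1.1}; \eqref{z5.1.2} with part~(i) gives~\eqref{t5.1.2}; and~\eqref{z5.1.3} with part~(ii) gives~\eqref{t5.1.3}.

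The one point demanding care is the first step, namely matching the paper's notion of an ergodic strictly stationary sequence with $\mathbb{P}$-triviality of the class $\mathcal{I}^{\mathbb{X}}$ of sets invariant with respect to $\mathbb{X}$ (via Lemma~\ref{l2}) and checking that this forces the \emph{extended} rv $\gamma_1(X_1|\mathcal{I}^{\mathbb{X}})$ to be a.s.\ constant; once that is settled, the remainder is a routine substitution into Theorems~\ref{tw.3.4} and~\ref{th4.2} and Corollaries~\ref{con3.1} and~\ref{c4.1}, using Theorem~\ref{stalyKwantyl} to replace $\gamma_1(X_1|\mathcal{I}^{\mathbb{X}})$ by $\gamma_1^{X_1}$ and the triviality of $\mathcal{I}^{\mathbb{X}}$ to replace conditional probabilities by unconditional ones.
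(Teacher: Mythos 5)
Your proposal is correct and follows essentially the same route as the paper's proof: triviality of $\mathcal{I}^{\mathbb{X}}$ under ergodicity, the identification $\gamma_1(X_1|\mathcal{I}^{\mathbb{X}})=\gamma_1^{X_1}$ a.s.\ via Theorem~\ref{stalyKwantyl}, and then substitution into Corollaries~\ref{con3.1} and~\ref{c4.1} for part~(i) and into Theorem~\ref{tw.3.4}/Theorem~\ref{tw.5} and Theorem~\ref{th4.2} for part~(ii), with conditional probabilities collapsing to unconditional ones. Your explicit remarks that the limiting rv is degenerate (so that part~(ii) of Theorem~\ref{th4.2} applies for the convergence in distribution) and the derivation of $\mathbb{P}$-triviality via Lemma~\ref{l2}(iii) only make explicit what the paper leaves implicit.
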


\begin{proof}
The ergodicity of~$(X_n,n\ge 1)$ ensures that the measure of any set $A\in\mathcal{I}^{\mathbb{X}}$ equals 0 or~1. As a~consequence, every $\mathcal{I}^{\mathbb{X}}$-measurable extended rv is almost surely constant. Since $\gamma_1(X_1|\mathcal{I}^{\mathbb{X}})$ is $\mathcal{I}^{\mathbb{X}}$-measurable, by Theorem~\ref{stalyKwantyl} we get
\begin{equation}\label{pth5.1.1}
    \gamma_1(X_1|\mathcal{I}^{\mathbb{X}})=\gamma_1^{X_1}\textrm{ a.s.}
\end{equation}
Hence~(i) follows from Corollaries~\ref{con3.1} and \ref{c4.1}.

It remains to prove (ii). Observe that
\[
\mathbb{P}(X_1>\gamma_1(X_1|\mathcal{I}^{\mathbb{X}})-a|\mathcal{I}^{\mathbb{X}})=\mathbb{P}(X_1>\gamma_1^{X_1}-a|\mathcal{I}^{\mathbb{X}})
\]
is $\mathcal{I}^{\mathbb{X}}$-measurable and hence almost surely constant. This gives
\begin{align}\label{pth5.1.2}
    \mathbb{P}(X_1>\gamma_1(X_1|\mathcal{I}^{\mathbb{X}})-a|\mathcal{I}^{\mathbb{X}})=\mathbb{E}(\mathbb{P}(X_1>\gamma_1^{X_1}-a|\mathcal{I}^{\mathbb{X}}))=\mathbb{P}(X_1>\gamma_1^{X_1}-a).
\end{align}

From~\eqref{a5.1.1} and~\eqref{pth5.1.1} we conclude that conditions~\eqref{zal_x3} and~\eqref{zal_x2} are satisfied. Therefore we can use Theorems~3.3 and~\ref{th4.2}. By~\eqref{pth5.1.2} the former gives
\[
K(k_n,n,a)/n\xrightarrow{a.s.}\mathbb{P}(X_1>\gamma_1(X_1|\mathcal{I}^{\mathbb{X}})-a|\mathcal{I}^{\mathbb{X}})=\mathbb{P}(X_1>\gamma_1^{X_1}-a)\quad\textrm{a.s.},
\]
while the latter yields \eqref{t5.1.1}, \eqref{t5.1.2} and \eqref{t5.1.3}, provided that \eqref{z5.1.1}, \eqref{z5.1.2}  and \eqref{z5.1.3} hold, respectively.
\end{proof}

\begin{rem}
Condition~\eqref{a5.1.1} is satisfied whenever $X_1$ has a~continuous cumulative distribution function.
\end{rem}

\begin{rem}
Theorem~\ref{stat_ergodic} has a~quite wide range of applications, because the class of strictly stationary and ergodic sequences of rv's is broad. A~list of examples of members of this class can be found, for example, in Dembi\'nska (2014, Remark~2.1). Here we only briefly recall that every sequence of independent and identically distributed rv's is strictly stationary and ergodic. Moreover all linear processes belong to this class. In turn, the family of linear processes includes many processes of interest, like, for instance, stationary autoregressive-moving average processes or Gaussian processes with absolutely continuous spectrum.
\end{rem}

\begin{rem}
Some weaker versions of parts of Theorem~\ref{stat_ergodic} are known in the literature. For example, Pakes and Steutel (1997, Theorem 5.1) showed that~\eqref{t5.1.0} holds with $k_n=n$, $n\ge 1$, and ,,a.s'' replaced by ,,p'' provided that $\gamma_1^{X_1}<\infty$ and $\mathbb{X}=(X_n,n\ge 1)$ is a~sequence of independent rv's with a~common continuous cumulative distribution function. Assuming further that Condition~\ref{cond_A} and~\eqref{z5.1.2} are satisfied, Li and Pakes (2001) proved~\eqref{t5.1.2} with $k_n=n$, $n\ge 1$. A~special case of Theorem~5.1~(ii) with $c(t)=t$, $t>0$, and for sequences $(X_n,n\ge 1)$ of independent and identically distributed rv's is given in Dembi\'nska (2012a, Theorems~1 and~3 with $A=(0,a))$. It is also worth pointing out that a~weaker version of some part of Theorem~5.1~(ii) can be deduced from Proposition~2.8 of Hashorva~(2003).
\end{rem}

\subsection{Random shift and scaling of strictly stationary and ergodic processes}
\label{subsec:ShiftScaling}

In this subsection, we consider two sequences of rv's, $\mathbb{R}=(R_n,n\ge 1)$ and $\mathbb{S}=(S_n,n\ge 1)$, defined as follows
\[
R_n=X_n+U\textrm{ and }S_n=V\cdot X_n,\ n\ge 1,
\]
where $\mathbb{X}=(X_n,n\ge 1)$ is a~strictly stationary and ergodic sequence of rv's, $U$ is an rv and $V$ is a~positive rv. Because now we will work with three sequences of rv's, to avoid confusion, we again will use notation with superscripts. More pricesely, we will write $K^{\mathbb{X}}(k_n,n,a)$ and $\mathcal{K}^{\mathbb{X}}_t(k_{N(t)},a)$ to indicate that the counting rv's arise from the sequence $\mathbb{X}=(X_n,n\ge 1)$.

\begin{tw}\label{niesk}
If $\gamma_1^{X_1}=\infty$ and the sequence $(k_n.n\ge 1)$ satisfies~\eqref{warK}, then
\[
K^{\mathbb{R}}(k_n,n,a)/n\xrightarrow{a.s.}0\textrm{ and }K^{\mathbb{S}}(k_n,n,a)/n\xrightarrow{a.s.}0\textrm{ as }n\to\infty.
\]
If moreover Condition~\ref{cond_A} and~\eqref{ac4.1} hold, then, as $t\to\infty$,
\begin{align*}
  \mathcal{K}^{\mathbb{R}}_t(k_{N(t)},a)/c(t)\xrightarrow{a.s.}0\textrm{ and }\mathcal{K}^{\mathbb{S}}_t(k_{N(t)},a)/c(t)\xrightarrow{a.s.}0\\
  \left(\mathcal{K}^{\mathbb{R}}_t(k_{N(t)},a)/c(t)\xrightarrow{p}0\textrm{ and }\mathcal{K}^{\mathbb{S}}_t(k_{N(t)},a)/c(t)\xrightarrow{p}0\right).
\end{align*}
\end{tw}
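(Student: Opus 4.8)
The plan is to reduce both assertions to the facts already established for $\mathbb{X}$ itself, by noticing that a common additive shift $U$, respectively a common positive multiplicative factor $V$, transforms order statistics in a completely controlled way, so that the counting random variables for $\mathbb{R}$ and $\mathbb{S}$ can be expressed pathwise through those for $\mathbb{X}$. Concretely, since $R_i=X_i+U$ with $U$ not depending on $i$, for every $\omega$ one has $R_{k_n:n}=X_{k_n:n}+U$, hence $\{R_{k_n:n}-a<R_i<R_{k_n:n}\}=\{X_{k_n:n}-a<X_i<X_{k_n:n}\}$; summing over $i$ gives $K^{\mathbb{R}}(k_n,n,a)=K^{\mathbb{X}}(k_n,n,a)$ and likewise $\mathcal{K}^{\mathbb{R}}_t(k_{N(t)},a)=\mathcal{K}^{\mathbb{X}}_t(k_{N(t)},a)$. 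Similarly, because $V>0$ a.s.\ multiplication by $V$ preserves order, $S_{k_n:n}=V\cdot X_{k_n:n}$ and $\{S_{k_n:n}-a<S_i<S_{k_n:n}\}=\{X_{k_n:n}-a/V<X_i<X_{k_n:n}\}$, so $K^{\mathbb{S}}(k_n,n,a)=K^{\mathbb{X}}(k_n,n,a/V)$ and $\mathcal{K}^{\mathbb{S}}_t(k_{N(t)},a)=\mathcal{K}^{\mathbb{X}}_t(k_{N(t)},a/V)$, the only genuinely new feature being that the ``distance'' is now the positive random variable $a/V$.

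For $\mathbb{R}$ the conclusion is then immediate: $\mathbb{X}$ is strictly stationary and ergodic with $\gamma_1^{X_1}=\infty$, so part (i) of Theorem~\ref{stat_ergodic} — equivalently Corollaries~\ref{con3.1} and~\ref{c4.1}, using that ergodicity together with Theorem~\ref{stalyKwantyl} gives $\gamma_1(X_1|\mathcal{I}^{\mathbb{X}})=\gamma_1^{X_1}=\infty$ a.s. — yields $K^{\mathbb{X}}(k_n,n,a)/n\xrightarrow{a.s.}0$ and, under Condition~\ref{cond_A} and~\eqref{ac4.1}, $\mathcal{K}^{\mathbb{X}}_t(k_{N(t)},a)/c(t)\to 0$ almost surely (respectively in probability); the pathwise identities transfer these statements verbatim to $\mathbb{R}$.

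For $\mathbb{S}$ I would dispose of the random distance by monotonicity. The map $c\mapsto K^{\mathbb{X}}(k_n,n,c)$ is nondecreasing, so on the event $\{a/V\le M\}$ with $M\in\mathbb{N}$ we have $K^{\mathbb{S}}(k_n,n,a)\le K^{\mathbb{X}}(k_n,n,M)$, and $K^{\mathbb{X}}(k_n,n,M)/n\xrightarrow{a.s.}0$ by Theorem~\ref{stat_ergodic}(i) applied with distance $M$; since $V>0$ a.s., $\bigcup_{M\ge 1}\{a/V\le M\}$ has probability one, whence $K^{\mathbb{S}}(k_n,n,a)/n\xrightarrow{a.s.}0$. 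The same truncation with $\mathcal{K}_t$ in place of $K$, together with Corollary~\ref{c4.1} applied to $\mathbb{X}$ at distance $M$, gives $\mathcal{K}^{\mathbb{S}}_t(k_{N(t)},a)/c(t)\to 0$: in the a.s.\ regime by again taking a union over $M$, and in the convergence-in-probability regime by first choosing $M$ so that $\mathbb{P}(a/V>M)$ is small and then applying the bound on the complementary event.

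The only step needing real care is this random rescaling distance $a/V$ for $\mathbb{S}$: the limit theorems of the paper are formulated for a fixed positive real $a$, so one cannot substitute $a/V$ directly, and the monotonicity-and-truncation device — which crucially uses $V>0$ a.s. — is what bridges the gap. Everything else is a transparent consequence of the order-statistic identities above; note in particular that no stationarity of $\mathbb{R}$ or $\mathbb{S}$, and hence no independence hypothesis on $U$ or $V$, is needed, since the reduction to $\mathbb{X}$ is purely pathwise.
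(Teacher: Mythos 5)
Your proof is correct, but it takes a genuinely different route from the paper. The paper treats $\mathbb{R}$ and $\mathbb{S}$ as strictly stationary sequences in their own right and applies its general machinery to them directly: it invokes Theorem~\ref{tw.2} to get $R_{k_n:n}\xrightarrow{a.s.}\gamma_1(R_1|\mathcal{I}^{\mathbb{R}})$ and $S_{k_n:n}\xrightarrow{a.s.}\gamma_1(S_1|\mathcal{I}^{\mathbb{S}})$, combines this with the external result of Buraczy\'nska and Dembi\'nska (2018, Section~5) that $R_{k_n:n}\xrightarrow{a.s.}\gamma_1^{X_1}+U$ and $S_{k_n:n}\xrightarrow{a.s.}V\gamma_1^{X_1}$, concludes that both conditional right endpoints are $+\infty$ a.s., and then cites Corollaries~\ref{con3.1} and~\ref{c4.1} for $\mathbb{R}$ and $\mathbb{S}$ themselves. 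You instead reduce everything pathwise to $\mathbb{X}$: the shift identity $K^{\mathbb{R}}(k_n,n,a)=K^{\mathbb{X}}(k_n,n,a)$ (which the paper itself uses later, in the proof of Theorem~\ref{zbieznosc_R}) settles the $\mathbb{R}$ case immediately, and for $\mathbb{S}$ you correctly note that the scaling produces the random distance $a/V$, which the paper's results (stated for a fixed $a>0$) do not cover directly, and you bridge this with the monotonicity-in-$a$ bound $K^{\mathbb{S}}(k_n,n,a)\le K^{\mathbb{X}}(k_n,n,M)$ on $\{a/V\le M\}$ plus a union (a.s.\ case) or a two-term estimate $\mathbb{P}(a/V>M)+\mathbb{P}(\mathcal{K}^{\mathbb{X}}_t(k_{N(t)},M)/c(t)>\varepsilon)$ (in-probability case); both steps are sound since $V>0$ a.s. What each approach buys: the paper's route is uniform for $\mathbb{R}$ and $\mathbb{S}$ and yields the identification $\gamma_1(S_1|\mathcal{I}^{\mathbb{S}})=V\gamma_1^{X_1}$ a.s., which is reused in Theorem~\ref{zbieznosc_S} and Subsection~\ref{subsec:Sums}, but it relies on the strict stationarity of $\mathbb{R}$ and $\mathbb{S}$ (which the paper calls obvious, though in full generality it needs some joint condition on $(\mathbb{X},U)$ and $(\mathbb{X},V)$) and on a result imported from outside the paper; your argument is more elementary and self-contained, needs no stationarity or ergodicity of the transformed sequences and no hypothesis linking $U,V$ to $\mathbb{X}$, at the cost of the extra truncation device and of proving only the degenerate-limit statement of this particular theorem.
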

\begin{proof}
Obviously, sequences $\mathbb{R}=(R_n,n\ge 1)$ and $\mathbb{S}=(S_n,n\ge 1)$ are strictly stationary. Therefore if we prove that
\begin{equation}\label{pth5.2.1}
    \gamma_1(R_1|\mathcal{I}^{\mathbb{R}})=\infty\textrm{ a.s. and }\gamma_1(S_1|\mathcal{I}^{\mathbb{S}})=\infty\textrm{ a.s.},
\end{equation}
the assertion will follow from Corollaries~\ref{con3.1} and~\ref{c4.1}. By Theorem~\ref{tw.2},
\begin{equation}\label{pth5.2.2}
    R_{k_n:n}\xrightarrow{a.s.}\gamma_1(R_1|\mathcal{I}^{\mathbb{R}})\textrm{  and }S_{k_n:n}\xrightarrow{a.s.}\gamma_1(S_1|\mathcal{I}^{\mathbb{S}}).
\end{equation}
On the other hand, Buraczy\'nska and Dembi\'nska (2018, Section~5) showed that
\begin{equation}\label{pth5.2.3}
    R_{k_n:n}\xrightarrow{a.s.}\gamma_1^{X_1}+U\textrm{  and }S_{k_n:n}\xrightarrow{a.s.}V\gamma_1^{X_1}.
\end{equation}
From~\eqref{pth5.2.2} and~\eqref{pth5.2.3} we see that
\begin{equation}\label{pth5.2.4}
    \gamma_1(R_1|\mathcal{I}^{\mathbb{R}})=\gamma_1^{X_1}+U\textrm{ a.s.  and }\gamma_1(S_1|\mathcal{I}^{\mathbb{S}})=V\gamma_1^{X_1}\textrm{ a.s.},
\end{equation}
which combined with the assumption that $\gamma_1^{X_1}=\infty$, gives~\eqref{pth5.2.1}. The proof is complete.
\end{proof}

The remainder of this subsection is devoted to the case of $\gamma_1^{X_1}<\infty$. Then the almost sure convergence of $K^{\mathbb{R}}(k_n,n,a)/n$ follows  from Theorem~\ref{stat_ergodic}.
\begin{tw}\label{zbieznosc_R}
Let $\gamma_1^{X_1}<\infty$, \eqref{a5.1.1} hold and $(k_n,n\ge 1)$ be a~sequence satisfying~\eqref{warK}. Then
\begin{equation}\label{cth5.3.1}
    K^{\mathbb{R}}(k_n,n,a)/n\xrightarrow{a.s.}\mathbb{P}(X_1>\gamma_1^{X_1}-a)\textrm{ as }n\to\infty.
\end{equation}
If moreover Condition~\ref{cond_A} is fulfilled, then
\[
N(t)/c(t)\xrightarrow{a.s.}Z\textrm{ implies }\mathcal{K}^{\mathbb{R}}_t(k_{N(t)},a)/c(t)\xrightarrow{a.s.}Z\cdot\mathbb{P}(X_1>\gamma_1^{X_1}-a),
\]
and the above implication still holds if we replace the almost sure convergence by convergence in probability or convergence in disribution.
\end{tw}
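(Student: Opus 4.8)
The plan is to reduce Theorem~\ref{zbieznosc_R} to Theorem~\ref{stat_ergodic}~(ii) applied to the shifted sequence $\mathbb{R}=(R_n,n\ge 1)$. The key observation, already recorded in the proof of Theorem~\ref{niesk}, is that $\mathbb{R}$ is itself strictly stationary; I will additionally note that it is ergodic, since $\mathbb{R}$ is obtained from the strictly stationary and ergodic $\mathbb{X}$ together with the single rv $U$ by a fixed coordinatewise map, and adjoining a random shift that is common to all coordinates does not destroy ergodicity of the shift (equivalently, one can work with $\mathcal{I}^{\mathbb{R}}$ directly as in the proof of Theorem~\ref{niesk}). Hence Theorem~\ref{stat_ergodic} is applicable to $\mathbb{R}$ once I verify its hypotheses.

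First I would establish the relevant distributional facts about $R_1$. From~\eqref{pth5.2.4} in the proof of Theorem~\ref{niesk} we have $\gamma_1^{R_1}=\gamma_1(R_1|\mathcal{I}^{\mathbb{R}})=\gamma_1^{X_1}+U$ a.s., so in particular $\gamma_1^{R_1}<\infty$ a.s. under the assumption $\gamma_1^{X_1}<\infty$. Actually, to invoke Theorem~\ref{stat_ergodic}~(ii) one wants $\gamma_1^{R_1}$ to be a genuine (finite, deterministic) number; this holds automatically since $\gamma_1^{R_1}$ is by definition a constant, and the a.s.\ identity $\gamma_1(R_1|\mathcal{I}^{\mathbb{R}})=\gamma_1^{X_1}+U$ combined with Theorem~\ref{stalyKwantyl} forces $U$ to be a.s.\ constant whenever we are in the ergodic setting — so in effect the interesting content is the case $U=c$ a.s.\ for some constant $c$, where $\gamma_1^{R_1}=\gamma_1^{X_1}+c$. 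Then I must check the analogue of~\eqref{a5.1.1} for $R_1$: that $\mathbb{P}(R_1=\gamma_1^{R_1})=0$ and $\mathbb{P}(R_1=\gamma_1^{R_1}-a)=0$. But $R_1=\gamma_1^{R_1}$ iff $X_1+U=\gamma_1^{X_1}+U$ iff $X_1=\gamma_1^{X_1}$, and similarly $R_1=\gamma_1^{R_1}-a$ iff $X_1=\gamma_1^{X_1}-a$; so both probabilities vanish by the hypothesis~\eqref{a5.1.1} on $X_1$.

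With the hypotheses verified, Theorem~\ref{stat_ergodic}~(ii) applied to $\mathbb{R}$ gives immediately
\[
K^{\mathbb{R}}(k_n,n,a)/n\xrightarrow{a.s.}\mathbb{P}(R_1>\gamma_1^{R_1}-a)\textrm{ as }n\to\infty,
\]
and since $\{R_1>\gamma_1^{R_1}-a\}=\{X_1+U>\gamma_1^{X_1}+U-a\}=\{X_1>\gamma_1^{X_1}-a\}$, the limit equals $\mathbb{P}(X_1>\gamma_1^{X_1}-a)$, which is~\eqref{cth5.3.1}. Likewise, the randomly-indexed conclusions are exactly the statements in the second half of Theorem~\ref{stat_ergodic}~(ii) — namely~\eqref{t5.1.1}, \eqref{t5.1.2}, \eqref{t5.1.3} — read off for the sequence $\mathbb{R}$ under Condition~\ref{cond_A} and the respective modes of convergence in~\eqref{z5.1.1}, \eqref{z5.1.2}, \eqref{z5.1.3}, once one rewrites $Z\cdot\mathbb{P}(R_1>\gamma_1^{R_1}-a)=Z\cdot\mathbb{P}(X_1>\gamma_1^{X_1}-a)$.

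The only mild subtlety — and the step I expect to need the most care — is the ergodicity (or, equivalently, the invariant-$\sigma$-field) bookkeeping for $\mathbb{R}$: one must make sure that either $\mathbb{R}$ is genuinely ergodic, or else argue directly via $\mathcal{I}^{\mathbb{R}}$ and the identity $\gamma_1(R_1|\mathcal{I}^{\mathbb{R}})=\gamma_1^{X_1}+U$ exactly as in the proof of Theorem~\ref{niesk}, so that Theorem~\ref{tw.5} and Theorem~\ref{th4.2} can be invoked even when $U$ is not constant. In that more general reading, the limiting random variable $\mathbb{P}(R_1>\gamma_1(R_1|\mathcal{I}^{\mathbb{R}})-a|\mathcal{I}^{\mathbb{R}})$ still collapses to the deterministic constant $\mathbb{P}(X_1>\gamma_1^{X_1}-a)$ because, after the change of variables by the $\mathcal{I}^{\mathbb{R}}$-measurable shift $U$, the event no longer depends on $U$; so the final statement is unchanged. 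Everything else is a routine transcription of the already-proved Theorems~\ref{stat_ergodic} and~\ref{th4.2}.
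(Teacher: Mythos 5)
There is a genuine gap, and it stems from missing the one observation that makes the paper's proof trivial: because every observation and the order statistic are shifted by the \emph{same} rv $U$, the shift cancels inside the indicators, so $K^{\mathbb{R}}(k_n,n,a)=\sum_{i=1}^n I(X_{k_n:n}+U-a<X_i+U<X_{k_n:n}+U)=K^{\mathbb{X}}(k_n,n,a)$ pointwise. The paper then applies Theorem~\ref{stat_ergodic} to the ergodic sequence $\mathbb{X}$ itself, and the random-index statements follow as in the proof of Theorem~\ref{th4.2}. Your main route instead applies Theorem~\ref{stat_ergodic}~(ii) to $\mathbb{R}$, which requires $\mathbb{R}$ to be ergodic, and your claim that ``adjoining a random shift common to all coordinates does not destroy ergodicity'' is false: if, say, $X_n$ are i.i.d.\ and $U$ is non-degenerate and independent of $\mathbb{X}$, then $\limsup_n R_n=\gamma_1^{X_1}+U$ is a non-degenerate $\mathcal{I}^{\mathbb{R}}$-measurable functional, so $\mathcal{I}^{\mathbb{R}}$ is non-trivial and $\mathbb{R}$ is not ergodic. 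Your attempted repair via Theorem~\ref{stalyKwantyl} (``$U$ must be a.s.\ constant, so the interesting case is $U=c$'') is circular: it derives constancy of $U$ from the ergodicity you are trying to justify, and in effect proves a weaker theorem than the one stated, since the statement allows an arbitrary rv $U$.

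Your fallback route (work with $\mathcal{I}^{\mathbb{R}}$ and Theorem~\ref{tw.5}, using $\gamma_1(R_1|\mathcal{I}^{\mathbb{R}})=\gamma_1^{X_1}+U$ from \eqref{pth5.2.4}) correctly verifies the hypotheses \eqref{zal_x3}--\eqref{zal_x2} for $\mathbb{R}$ from \eqref{a5.1.1}, and it yields the limit $\mathbb{P}(X_1>\gamma_1^{X_1}-a\,|\,\mathcal{I}^{\mathbb{R}})$; this is in fact the strategy the paper uses for the scaled sequence $\mathbb{S}$ in Theorem~\ref{zbieznosc_S}, where the limit genuinely stays conditional. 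But for $\mathbb{R}$ the remaining step — that this conditional probability collapses to the constant $\mathbb{P}(X_1>\gamma_1^{X_1}-a)$ — is exactly what needs proof, and the reason you give (``the event no longer depends on $U$'') does not establish it: an event depending only on $X_1$ need not be independent of $\mathcal{I}^{\mathbb{R}}$, since $\mathcal{I}^{\mathbb{R}}$ contains asymptotic functionals of the $X_n$'s together with $U$, and one would have to argue (e.g.\ by conditioning on $U$ and using ergodicity of $\mathbb{X}$) that such events decouple from $\sigma(X_1)$. The cancellation identity $K^{\mathbb{R}}(k_n,n,a)=K^{\mathbb{X}}(k_n,n,a)$ makes all of this unnecessary; once \eqref{cth5.3.1} is obtained that way (with a deterministic limit), your treatment of the randomly indexed versions via Lemma~\ref{4lemat2}, Lemma~\ref{LEKM} and Slutsky, as in Theorem~\ref{th4.2}, is fine.
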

\begin{proof}
Since $R_{k_n:n}=X_{k_n:n}+U$, $n\ge 1$, we get
\begin{align*}
\frac{K^{\mathbb{R}}(k_n,n,a)}{n}&=\frac{\sum_{i=1}^n I(X_{k_n:n}+U-a<X_i+U<X_{k_n:n}+U)}{n}=\frac{K^{\mathbb{X}}(k_n,n,a)}{n}\\
&\xrightarrow{a.s.}\mathbb{P}(X_1>\gamma_1^{X_1}-a)\textrm{ as }n\to\infty,
\end{align*}
by Theorem~\ref{stat_ergodic}. Thus~\eqref{cth5.3.1} is shown. The rest of the proof goes along the same lines as the proof of Theorem~\ref{th4.2}.
\end{proof}

The last theorem concerns the sequence of rv's $\mathbb{S}=(S_n,n\ge 1)$.
\begin{tw}\label{zbieznosc_S}
Let $\gamma_1^{X_1}<\infty$ and $(k_n,n\ge 1)$ satisfies~\eqref{warK}. If
\begin{equation}\label{d5.4.1}
    \mathbb{P}(X_1=\gamma_1^{X_1})=\mathbb{P}(X_1+\frac{a}{V}=\gamma_1^{X_1})=0,
\end{equation}
then
\begin{equation}\label{cth5.4.0}
    K^{\mathbb{S}}(k_n,n,a)/n\xrightarrow{a.s.}\mathbb{P}(X_1+\frac{a}{V}>\gamma_1^{X_1}|\mathcal{I}^{\mathbb{S}})\textrm{ as }n\to\infty.
\end{equation}
If moreover Condition~\ref{cond_A} holds, then, as $t\to\infty$,
\begin{equation}\label{c5.4.1}
  \mathcal{K}^{\mathbb{S}}_t(k_{N(t)},a)/c(t)\xrightarrow{a.s.}Z\cdot\mathbb{P}(X_1+\frac{a}{V}>\gamma_1^{X_1}|\mathcal{I}^{\mathbb{S}}),  
\end{equation}
provided that
\begin{equation}\label{a5.4.1}
   N(t)/c(t)\xrightarrow{a.s.}Z\textrm{ as }t\to\infty.
\end{equation}
Furthermore, ``a.s.'' in~\eqref{c5.4.1} can be replaced by ``$\, p$''  and ``$d$"
if~\eqref{a5.4.1} is replaced by ``$N(t)/c(t)\xrightarrow{p}Z$'' and ``$N(t)/c(t)\xrightarrow{d}Z$ and $\mathbb{P}(X_1+\frac{a}{V}>\gamma_1^{X_1}|\mathcal{I}^{\mathbb{S}})$ is degenerate'', respectively.
\end{tw}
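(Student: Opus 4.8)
The plan is to observe that $\mathbb{S}=(S_n,n\ge 1)$ is itself a~strictly stationary sequence (as already noted in the proof of Theorem~\ref{niesk}) and to apply Theorem~\ref{tw.5} and Theorem~\ref{th4.2} \emph{directly to $\mathbb{S}$}, rather than trying to transfer the problem to $\mathbb{X}$. Such a transfer worked for $\mathbb{R}$ in Theorem~\ref{zbieznosc_R} because adding the common shift $U$ to every observation cancels; it fails here. Indeed, since $V>0$ we have $S_{k_n:n}=V X_{k_n:n}$, so dividing the defining inequalities by $V$ gives
\[
K^{\mathbb{S}}(k_n,n,a)=\sum_{i=1}^n I\Big(X_{k_n:n}-\tfrac{a}{V}<X_i<X_{k_n:n}\Big),
\]
a~count in a~left neighborhood of \emph{random} width $a/V$, which is not of the form $K^{\mathbb{X}}(k_n,n,a)$; moreover $\mathbb{S}$ need not be ergodic even though $\mathbb{X}$ is, which is why the sigma-field $\mathcal{I}^{\mathbb{S}}$ and a~conditional probability appear in the limit.

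First I would identify $\gamma_1(S_1|\mathcal{I}^{\mathbb{S}})$. By Theorem~\ref{tw.2}, $S_{k_n:n}\xrightarrow{\mathbb{P}-a.s.}\gamma_1(S_1|\mathcal{I}^{\mathbb{S}})$, while Buraczy\'nska and Dembi\'nska (2018, Section~5) gives $S_{k_n:n}\xrightarrow{a.s.}V\gamma_1^{X_1}$ (this was established in the proof of Theorem~\ref{niesk}; see~\eqref{pth5.2.4}), so uniqueness of almost sure limits yields $\gamma_1(S_1|\mathcal{I}^{\mathbb{S}})=V\gamma_1^{X_1}$ a.s., which is finite a.s.\ because $\gamma_1^{X_1}<\infty$ and $V$ is a~finite positive rv. Next I would check hypotheses \eqref{zal_x3} and \eqref{zal_x2} of Theorem~\ref{tw.5} for $\mathbb{S}$ with the fixed constant $a$: since $V>0$,
\[
\{S_1=\gamma_1(S_1|\mathcal{I}^{\mathbb{S}})\}=\{X_1=\gamma_1^{X_1}\}\ \ \textrm{a.s.},\qquad \{S_1=\gamma_1(S_1|\mathcal{I}^{\mathbb{S}})-a\}=\Big\{X_1+\tfrac{a}{V}=\gamma_1^{X_1}\Big\}\ \ \textrm{a.s.},
\]
and both events have probability $0$ by \eqref{d5.4.1}.

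Then Theorem~\ref{tw.5} applied to $\mathbb{S}$ gives $K^{\mathbb{S}}(k_n,n,a)/n\xrightarrow{\mathbb{P}-a.s.}\mathbb{P}(S_1>\gamma_1(S_1|\mathcal{I}^{\mathbb{S}})-a\,|\,\mathcal{I}^{\mathbb{S}})$, and the last step towards \eqref{cth5.4.0} is the elementary identity $\{S_1>\gamma_1(S_1|\mathcal{I}^{\mathbb{S}})-a\}=\{V X_1>V\gamma_1^{X_1}-a\}=\{X_1+\tfrac{a}{V}>\gamma_1^{X_1}\}$ a.s.\ (the second equality holding pointwise because $V>0$, with no measurability of $V$ required), so that the two conditional expectations given $\mathcal{I}^{\mathbb{S}}$ coincide a.s. For the randomly indexed statements I would invoke Theorem~\ref{th4.2} for the sequence $\mathbb{S}$ --- whose assumptions (those of Theorem~\ref{tw.5}) we have just verified --- together with $I(\gamma_1(S_1|\mathcal{I}^{\mathbb{S}})<\infty)=1$ a.s.: part~(i) gives \eqref{c5.4.1} under \eqref{a5.4.1}, and likewise the ``$p$'' version under $N(t)/c(t)\xrightarrow{p}Z$, while part~(ii) gives the ``$d$'' version when $\mathbb{P}(X_1+\tfrac{a}{V}>\gamma_1^{X_1}|\mathcal{I}^{\mathbb{S}})$ is degenerate.

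I expect the only genuinely delicate point to be the handling of the random width $a/V$ inside the conditional expectation: one must argue at the level of events in $\Omega$, so that neither $\mathcal{I}^{\mathbb{S}}$-measurability of $V$ nor $\gamma_1^{X_1}\neq 0$ is needed, and one must make sure that $\mathbb{S}$ is strictly stationary so that Theorems~\ref{tw.5} and~\ref{th4.2} genuinely apply. Once $\gamma_1(S_1|\mathcal{I}^{\mathbb{S}})=V\gamma_1^{X_1}$ is in hand, everything else is a~direct transcription of those theorems.
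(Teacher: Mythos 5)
Your proposal is correct and follows essentially the same route as the paper's proof: identify $\gamma_1(S_1|\mathcal{I}^{\mathbb{S}})=V\gamma_1^{X_1}$ a.s.\ via~\eqref{pth5.2.4}, use~\eqref{d5.4.1} to verify the atom conditions~\eqref{zal_x3} and~\eqref{zal_x2} for the strictly stationary sequence $\mathbb{S}$, and then apply Theorems~\ref{tw.5} and~\ref{th4.2} directly to $\mathbb{S}$, rewriting the limiting conditional probability using $V>0$. Your additional remarks (the event-level identity avoiding any measurability of $V$, and the indicator $I(\gamma_1(S_1|\mathcal{I}^{\mathbb{S}})<\infty)=1$ a.s.) only make explicit what the paper leaves implicit.
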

\begin{proof}
The definition of $(S_n,n\ge 1)$, assumptions~\eqref{a5.4.1} and~\eqref{pth5.2.4} guarantee that
\[
\mathbb{P}(S_1=\gamma_1(S_1|\mathcal{I}^{\mathbb{S}}))=0\textrm{ and }\mathbb{P}(S_1=\gamma_1(S_1|\mathcal{I}^{\mathbb{S}})-a)=0.
\]
Therefore we can apply Theorems~\ref{tw.5} and~\ref{th4.2} to the strictly stationary sequence $\mathbb{S}=(S_n,n\ge 1)$. Using these results and noting that~\eqref{pth5.2.4} gives
\[
\mathbb{P}(S_1>\gamma_1(S_1|\mathcal{I}^{\mathbb{S}})-a|\mathcal{I}^{\mathbb{S}})= \mathbb{P}(X_1>\gamma_1^{X_1}-\frac{a}{V}|\mathcal{I}^{\mathbb{S}})\quad\textrm{a.s.}
\]
we finish the proof.
\end{proof}

\subsection{Total value of claims near the $(n-k_n+1)$st largest insurance claim }
\label{subsec:Sums}

From the proofs of Theorems~\ref{stat_ergodic} and~\ref{niesk} we see that
\begin{itemize}
    \item 
    if $\mathbb{X}=(X_n,n\ge 1)$ is a~strictly stationary and ergodic sequence of rv's such that $\gamma_1^{X_1}=\infty$, then $\gamma_1(X_1|\mathcal{I}^{\mathbb{X}})=\infty$ a.s.;
    \item
    if $\mathbb{R}=(R_n,n\ge 1)$ and $\mathbb{S}=(S_n,n\ge 1)$ are sequences defined at the beginning of Subsection~\ref{subsec:ShiftScaling} and $\gamma_1^{X_1}=\infty$, then $\gamma_1(R_1|\mathcal{I}^{\mathbb{R}})=\infty$ a.s. and $\gamma_1(S_1|\mathcal{I}^{\mathbb{S}})=\infty$ a.s.
\end{itemize}
Therefore Theorems \ref{total} and \ref{totalBis}, describing the  long-term behavior of  total value of claims  in a left neighborhood of  the $(n-k_n+1)$st largest insurance claim, can be applied to any of the above mentioned sequences of rv's.

\bigskip
\bigskip
\noindent {\bf Acknowledgments}  

The work was supported  by Warsaw University of Technology under Grants nos. 504/ 03327/1120 and 504/ 03293/1120, and  by Polish National Science Centre under Grant no. 2015/19/B/ST1/03100.

\end{document}